\newcommand{\mycomment}[1]{}
\newcommand{\numberset}{\mathbb}
\newcommand{\N}{\numberset{N}}
\newcommand{\R}{\numberset{R}}
\newcommand{\T}{\numberset{T}}
\newcommand{\Z}{\numberset{Z}}
\newcommand{\C}{\numberset{C}}
\newcommand{\E}{\numberset{E}}
\newcommand{\F}{\mathcal{F}}  
\newcommand{\de}{\text{d}}
\DeclareMathOperator{\p}{\numberset{P}} 
\DeclareMathOperator{\low}{\ell}
\DeclareMathOperator{\roeta}{\rho^{\epsilon,\eta}}
\DeclareMathOperator*{\essinf}{ess\,inf}
\newcommand{\esssup}{\text{ess sup}}
\renewcommand{\and}{\quad\textrm{ and }\quad}
\renewcommand{\P}{\mathbb{P}}
\renewcommand{\a}{\alpha}
\newcommand{\TT}{\mathbb{T}}
\newcommand{\norm}[1]{\left\| #1 \right\|}
\def\XXint#1#2#3{{\setbox0=\hbox{$#1{#2#3}{\int}$ }
\vcenter{\hbox{$#2#3$ }}\kern-.6\wd0}}
\theoremstyle{definition}
\newtheorem{definition}{Definition}[section]
\newtheorem{theorem}[definition]{Theorem}
\newtheorem{lemma}[definition]{Lemma}
\newtheorem{corollary}[definition]{Corollary}
\newtheorem{proposition}[definition]{Proposition}
\newtheorem{example}[definition]{Example}
\newtheorem{remark}[definition]{Remark}
\newtheorem{assumption}[definition]{Assumption}
\newtheorem*{notation*}{Notation}
\newtheorem*{definition*}{Definition}
\newtheorem*{theorem*}{Theorem}
\newtheorem*{lemma*}{Lemma}
\newtheorem*{corollary*}{Corollary}
\newtheorem*{proposition*}{Proposition}
\newtheorem*{fact*}{Fact}
\newtheorem*{example*}{Example}
\newtheorem*{claim*}{Claim}
\newtheorem*{remark*}{Remark}
\newtheorem*{conjecture*}{Conjecture}
\numberwithin{equation}{section}
\title{A central limit theorem for nonlinear conservative SPDEs}
\author{Andrea Clini \thanks{Mathematical Institute, University of Oxford, Oxford OX2 6GG, UK (andrea.clini@maths.ox.ac.uk)} \and Benjamin Fehrman \thanks{Department of Mathematics, Louisiana State University, Baton Rouge 70803, USA (fehrman@math.lsu.edu)}}
\date{\today} 
\begin{document}
\maketitle
\begin{abstract}
    We prove a central limit theorem characterizing the small noise fluctuations of stochastic PDEs of fluctuating hydrodynamics type.  The results apply to the case of nonlinear and potentially degenerate diffusions and irregular noise coefficients including the square root.
    In several cases, the fluctuations of the solutions agree to first order with the fluctuations of certain interacting particle systems about their hydrodynamic limits.  
\end{abstract} 



\section{Introduction}
\label{section/Introduction}

In this paper we study the small noise fluctuations of nonnegative solutions $\rho^{\epsilon}$ to conservative, stochastic PDEs of the type
\begin{equation}\label{equation/generalized dean--kawasaki equation 1}
    \partial_t\rho^{\epsilon}=\Delta\phi(\rho^\epsilon)\textcolor{black}{-\nabla\cdot\nu(\rho^\epsilon)}-\sqrt{\epsilon}\nabla\cdot\left(\sigma(\rho^\epsilon)\circ \dot{\xi}^{\epsilon}\right)\,\,\,\text{in } \mathbb{T}^d\times(0,T),\quad \rho^\epsilon(\cdot,0)=\rho_0\,\,\,\text{in } \mathbb{T}^d\times\{0\},
\end{equation}
about their zero noise limit $\Bar{\rho}$ solving the equation
\begin{equation}\label{equation/hydrodynamic limit equation 1}
    \partial_t\Bar{\rho}=\Delta\phi(\bar{\rho})\textcolor{black}{-\nabla\cdot\nu(\bar{\rho})}\,\,\,\text{in } \mathbb{T}^d\times(0,T),\quad \bar{\rho}(\cdot,0)=\rho_0\,\,\,\text{in } \mathbb{T}^d\times\{0\}.
\end{equation}
We prove that the fluctuations $\epsilon^{-\frac{1}{2}}(\rho^\epsilon-\Bar{\rho})$ converge in probability, in a space of distributions, to the solution of the linearized Langevin equation
\begin{equation}\label{equation/OU equation 1}
    \partial_tv=\Delta\big(\dot{\phi}(\bar{\rho})\,v\big)-\nabla\cdot\left(\dot{\nu}(\bar{\rho})\,v+\sigma(\bar{\rho})\circ \dot{\xi}\right)\,\,\,\text{in } \mathbb{T}^d\times(0,T),\quad v(\cdot,0)=0\,\,\,\text{in } \mathbb{T}^d\times\{0\},\end{equation}
which is the linearization of \eqref{equation/generalized dean--kawasaki equation 1} around $\bar{\rho}$.

The assumptions on the noise sequence $\xi^\epsilon$, which can converge to a space-time white noise or some other cylindrical noise $\xi$ as $\epsilon\to0$, on the initial data $\rho_0$ and on the nonlinearities $\phi$, $\nu$ and $\sigma$ are given in Section \ref{section/hypotheses and notations}, and include a range of relevant stochastic PDEs (see Example \ref{example/applicability of coefficient assumptions}).
In particular, they apply to the full range of porous media diffusions, that is $\phi(z)=z^m$ for every $m\in(0,\infty)$, and to degenerate convective terms $\sigma$ and $\nu$, including the square root $\sigma(z)=\sqrt{z}$, and hence to the nonlinear version of the Dean--Kawasaki equation with correlated noise
\begin{equation}
    \partial_t\rho^{\epsilon}=\Delta(\rho^\epsilon)^{m}-\sqrt{\epsilon}\,\nabla\cdot\left(\sqrt{\rho^\epsilon}\circ \dot{\xi}^{\epsilon}\right).
\end{equation}
%
Equations of type \eqref{equation/hydrodynamic limit equation 1} and \eqref{equation/generalized dean--kawasaki equation 1} arise in the fluctuating hydrodynamics of particle systems, to describe respectively the limiting behavior and the nonequilibrium fluctuations of particle systems, in the context of mean field theory with common noise and in the area of stochastic geometric PDEs.
Applications to particle systems are discussed in Section \ref{section/comments on the literature} and we refer to \cite[Section 1.2]{fehrman-gess-Well-posedness-of-the-Dean-Kawasaki-and-the-nonlinear-Dawson-Watanabe-equation-with-correlated-noise} for a detailed overview of other applications.

The wellposedness of \eqref{equation/generalized dean--kawasaki equation 1} has been a long-standing open problem.
In the case the noise $\xi^\epsilon$ has sufficient space correlation, existence and uniqueness for \eqref{equation/generalized dean--kawasaki equation 1} have been established  only recently in \cite{fehrman-gess-Well-posedness-of-the-Dean-Kawasaki-and-the-nonlinear-Dawson-Watanabe-equation-with-correlated-noise}, through the concept of stochastic kinetic solution (cf. Definition \ref{definition/stochastic kinetic solution} and Theorem \ref{theorem/existence and uniquess of kinetic solutions to dean-kawasaki} below).

The study of the small noise behaviour of \eqref{equation/generalized dean--kawasaki equation 1} has been initiated in \cite{fehrman-gess-large-deviations-for-conservative}.
The authors proved that, in the $\epsilon\to0$ limit, solutions of \eqref{equation/generalized dean--kawasaki equation 1} converge to solutions of the zero-noise limit \eqref{equation/hydrodynamic limit equation 1}:
\begin{equation}\label{equation/small noise limit}
    \lim_{\epsilon\to0}\|\rho^\epsilon-\bar{\rho}\|_{L^1([0,T];L^1(\T^d))}=0\,\,\,\text{in probability}.
\end{equation}
Furthermore they satisfy a large deviation principle in $L^1([0,T];L^1(\T^d))$ with rate function
\begin{equation}
\label{equation/rate function 1}
    I_{\rho_0}(\rho)=\inf\left\{\|g\|^2_{L^2(\TT^d\times[0,T])^d} \,:\, \partial_t\rho=\Delta\phi(\rho)-\nabla\cdot\nu(\rho)-\nabla\cdot\left(\sigma(\rho)g\right)\,\,\text{and}\,\, \rho(\cdot,0)=\rho_0\right\}.
\end{equation}

\noindent
Finally the authors rigorously identified the rate function  \eqref{equation/rate function 1} with that governing the large deviations of the zero range particle process (cf. \cite[Theorem 6.8 and 8.6]{fehrman-gess-large-deviations-for-conservative}), first studied in \cite{benois-kipnis-landim-large-deviations-from-the-hydrodynamical-limit-of-mean-zero-asymmetric}, formalizing the connection between the particle system and the above SPDEs.
\medskip

The contribution of this paper is to complete the picture above by proving a central limit theorem characterizing the fluctuations of \eqref{equation/generalized dean--kawasaki equation 1} around its deterministic limit \eqref{equation/hydrodynamic limit equation 1}.
See Section \ref{section/comments on the literature} for an overview of the relevant literature and related results.
Precisely, consider the normalized fluctuations $v^\epsilon\coloneqq\epsilon^{-\nicefrac{1}{2}}\left(\rho^\epsilon-\bar{\rho}\right)$, which are readily seen to solve
{
\begin{align}[left ={\empheqlbrace}]\label{equation/fluctuation equation 1}
\begin{split}
    &\partial_tv^{\epsilon}=\Delta\left(\epsilon^{-\nicefrac{1}{2}}\Big(\phi(\rho^\epsilon)-\phi(\bar{\rho})\Big)\right)-\nabla\cdot\left(\epsilon^{-\nicefrac{1}{2}}\left(\nu(\rho^\epsilon)-\nu(\bar{\rho})\right)+\sigma(\rho^\epsilon)\circ \dot{\xi}^{\epsilon}\right)\,\,\,\text{in } \mathbb{T}^d\times(0,T),
    \\
    &v^\epsilon(\cdot,0)=0\,\,\,\text{on } \mathbb{T}^d\times\{0\}.
\end{split}
\end{align}
}
Since $\rho^\epsilon\to\bar{\rho}$ and $\xi^\epsilon\to\xi$ (cf. Assumption \ref{assumption/assumption N2}) as $\epsilon\to0$, one formally deduces that $v^\epsilon$ should converge to the solution of the Langevin equation \eqref{equation/OU equation 1}.  The main result of the present work is to make this ansatz rigorous.

\begin{theorem*}[Theorem \ref{theorem/clt for fluctuations 1} and \ref{theorem/clt for fluctuations 3} below]
Let $(\xi^\epsilon)_{\epsilon>0}$ satisfy Assumption \ref{assumption/assumption N2}.
Let $\phi, \nu, \sigma$ satisfy Assumption \ref{assumption/assumption C1} and \ref{assumption/assumption C2 weak}.
Let $\rho_0\in L^{\infty}(\Omega;(0,\infty))$ be an $\F_0$-measurable random constant bounded away from zero, so that $\bar{\rho}(x,t)\equiv \rho_0$ solves \eqref{equation/hydrodynamic limit equation 1}.
For each $\epsilon>0$, let $\rho^\epsilon$ be the stochastic kinetic solution to \eqref{equation/generalized dean--kawasaki equation 1} with initial data $\rho_0$, in the sense of Definition \ref{definition/stochastic kinetic solution}.
Let $v\in L^2(\Omega\times[0,T];H^{-\alpha}(\T^d))$ be the solution of \eqref{equation/OU equation 1} with noise $\xi=\lim_{\epsilon\to0}\xi^\epsilon$, for any $\alpha>\frac{d}{2}$.
For any $T>0$, for $\tau=2$ or $\tau=\infty$, for any $\beta>\frac{d}{2}$ or $\beta>1+\frac{d}{2}$ respectively, we have the following results.

\begin{enumerate}
    \item [i)]Along a suitable scaling regime where $\epsilon\to0$ and $\xi^\epsilon\to\xi$, explicitly given in \eqref{equation/scaling regime for noise}, the nonequilibrium fluctuations satisfy
        \begin{equation}\label{equation/clt in probability 1}
            v^\epsilon:=\epsilon^{-\nicefrac{1}{2}}(\rho^\epsilon-\Bar{\rho})\to v \quad\text{in $L^{\tau}([0,T];H^{-\beta}(\T^d))$ in probability},
        \end{equation}
    with an explicit rate of convergence, given in \eqref{formula/ rate of convergence clt in probability}, which depends on $\beta$, $T$, the coefficients $\phi,\nu,\sigma$, the initial data $\rho_0$, and the space regularity of the noise sequence $\xi^\epsilon$.
     
    \item[ii)] In addition, if the coefficients $\phi,\nu,\sigma$ also satisfy Assumption \ref{assumption/assumption C2}, along the same scaling regime \eqref{equation/scaling regime for noise}, we have
        \begin{align}\label{equation/rate of convergence 1}
            v^\epsilon\to v \quad\text{in $L^2(\Omega;L^{\tau}([0,T];H^{-\beta}(\T^d)))$},
    \end{align}
    with an explicit rate of convergence, given in \eqref{equation/rate of convergence 2}, which depends on $\beta$, $T$, the coefficients $\phi,\nu,\sigma$, the initial data $\rho_0$, and the space regularity of the noise sequence $\xi^\epsilon$.
\end{enumerate}

\end{theorem*}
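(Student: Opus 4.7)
The plan is to study $w^\epsilon := v^\epsilon - v$ via an energy estimate in $H^{-\beta}(\T^d)$ after isolating the linearized constant-coefficient operator from error terms. Since $\bar\rho \equiv \rho_0$ is constant in $(x,t)$, the coefficients $\dot\phi(\rho_0),\dot\nu(\rho_0),\sigma(\rho_0)$ are random constants, which drives most of the analysis. Taylor-expanding $\phi$ and $\nu$ about $\rho_0$ gives
\begin{equation*}
\epsilon^{-\nicefrac{1}{2}}\bigl(\phi(\rho^\epsilon)-\phi(\rho_0)\bigr) = \dot\phi(\rho_0)\,v^\epsilon + \sqrt\epsilon\,R_\phi^\epsilon, \qquad R_\phi^\epsilon := \tfrac{1}{2}\ddot\phi(\theta^\epsilon)(v^\epsilon)^2,
\end{equation*}
and similarly for $\nu$ with remainder $\sqrt\epsilon R_\nu^\epsilon$. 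Subtracting \eqref{equation/OU equation 1} from \eqref{equation/fluctuation equation 1}, one finds that $w^\epsilon$ solves
\begin{equation*}
\partial_t w^\epsilon = \Delta\bigl(\dot\phi(\rho_0)w^\epsilon\bigr) - \nabla\cdot\bigl(\dot\nu(\rho_0)w^\epsilon\bigr) + \sqrt\epsilon\bigl(\Delta R_\phi^\epsilon - \nabla\cdot R_\nu^\epsilon\bigr) - \nabla\cdot E^\epsilon_\sigma - \nabla\cdot E^\epsilon_\xi,
\end{equation*}
with zero initial data, where $E^\epsilon_\sigma := (\sigma(\rho^\epsilon)-\sigma(\rho_0))\circ\dot\xi^\epsilon$ encodes the noise-coefficient mismatch and $E^\epsilon_\xi := \sigma(\rho_0)\circ(\dot\xi^\epsilon-\dot\xi)$ encodes the noise-approximation error.

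Next, I would apply It\^o's formula to $\|w^\epsilon(t)\|_{H^{-\beta}(\T^d)}^2$ (formally, pair the equation with the Bessel potential $(1-\Delta)^{-\beta}w^\epsilon$). Because the linear part has constant coefficients, it commutes with $(1-\Delta)^{-\beta}$; the transport term integrates to zero, and the diffusion produces a nonnegative dissipation $\dot\phi(\rho_0)\|w^\epsilon\|_{H^{-\beta+1}(\T^d)}^2$ useful for absorbing derivatives from the forcings. The Stratonovich-to-It\^o correction coming from $E^\epsilon_\sigma$ and the quadratic variation of the martingale parts are handled under the assumption $\beta>\nicefrac{d}{2}$ (resp.\ $>1+\nicefrac{d}{2}$ for $\tau=\infty$) via the Sobolev embedding $H^\beta\hookrightarrow L^\infty$, exactly in the spirit of the estimates in \cite{fehrman-gess-Well-posedness-of-the-Dean-Kawasaki-and-the-nonlinear-Dawson-Watanabe-equation-with-correlated-noise,fehrman-gess-large-deviations-for-conservative}. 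After taking expectations, Burkholder-Davis-Gundy for the martingale increments, and Gronwall, the problem reduces to estimating the three error sources.

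The errors are controlled as follows. \emph{(a)} The Taylor remainders $\sqrt\epsilon R_{\phi,\nu}^\epsilon = O(\sqrt\epsilon\,(v^\epsilon)^2)$ contribute $O(\sqrt\epsilon)$ provided uniform-in-$\epsilon$ a priori bounds $\sup_\epsilon \mathbb{E}\|v^\epsilon\|_{L^\tau([0,T];H^{-\beta}(\T^d))}^p<\infty$ are available; these follow from the entropy and energy estimates built into the stochastic kinetic solution of \cite{fehrman-gess-Well-posedness-of-the-Dean-Kawasaki-and-the-nonlinear-Dawson-Watanabe-equation-with-correlated-noise}, combined with the small-noise tightness coming from the LDP of \cite{fehrman-gess-large-deviations-for-conservative}. \emph{(b)} In the region $\{\rho^\epsilon \ge \rho_0/2\}$ where $\sigma$ is smooth, Taylor gives $\sigma(\rho^\epsilon)-\sigma(\rho_0)=\dot\sigma(\rho_0)\sqrt\epsilon\, v^\epsilon + O(\epsilon(v^\epsilon)^2)$, hence $E^\epsilon_\sigma$ is $O(\sqrt\epsilon)$; on the complementary small-density region one exploits the entropy-dissipation bound $\mathbb{E}\int_0^T\!\!\int_{\T^d}|\nabla\sigma(\rho^\epsilon)|^2<\infty$ inherent to the kinetic framework together with the $L^1$-smallness \eqref{equation/small noise limit}. \emph{(c)} The noise-approximation error $E^\epsilon_\xi$ is controlled directly by Assumption \ref{assumption/assumption N2}, yielding the scaling \eqref{equation/scaling regime for noise}. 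Assembling (a)--(c) in the Gronwall argument produces the $L^2(\Omega)$ rate \eqref{equation/rate of convergence 2} of part ii), while for part i), under the weaker Assumption \ref{assumption/assumption C2 weak}, the same scheme closes only in probability, after Chebyshev and a truncation on the rare event $\{\|v^\epsilon\|_{H^{-\beta}(\T^d)}>M_\epsilon\}$ whose probability is controlled by the exponential tightness from the LDP.

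The \emph{main obstacle} will be precisely the interplay between the possibly degenerate coefficient $\sigma$ (e.g.\ $\sigma(z)=\sqrt z$) and the possibility that $\rho^\epsilon$ wanders into the region where $\sigma$ fails to be smooth. Although $\bar\rho=\rho_0>0$ is bounded away from zero, the stochastic fluctuations can drive $\rho^\epsilon$ arbitrarily close to zero with small but positive probability; carrying the differential calculus of $\sigma$ through the kinetic-solution framework in this region, and absorbing the singular contribution via the entropy dissipation, is what forces the stronger Assumption \ref{assumption/assumption C2} for the $L^2(\Omega)$-statement in part ii) and is the technical heart of the argument.
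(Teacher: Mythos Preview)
Your high-level decomposition (Taylor-expand $\phi,\nu$, separate the three error sources, close by Gronwall) is close in spirit to the paper, but two concrete steps do not go through as written.

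\textbf{For part ii).} The claim of \emph{uniform-in-$\epsilon$} bounds $\sup_\epsilon \mathbb{E}\|v^\epsilon\|^p<\infty$ is the wrong target: since $v$ is only distribution-valued, any positive-regularity norm of $v^\epsilon$ must blow up as $\epsilon\to0$, and the paper's Proposition~\ref{proposition/lp estimates} shows precisely that $\mathbb{E}\|v^\epsilon\|_{L^h}^h \lesssim \|F_3^\epsilon\|_\infty^{h/2}$, a rate that explodes along \eqref{equation/scaling regime for noise}. The mechanism that makes the argument close is that this blow-up is dominated by the prefactor $\sqrt\epsilon$ multiplying the Taylor remainder, which you do not track. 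Relatedly, the paper does not use an $H^{-\beta}$ energy identity plus Gronwall; it computes each Fourier mode $\widehat{v^\epsilon_n}-\widehat{v_n}$ explicitly by variation of constants (see \eqref{theorem/clt for fluctuations 1/proof 5}), so that the heat-semigroup factor $e^{-c n^2(t-s)}$ gains two derivatives and converts the $\Delta R^\epsilon_\phi$ term into an estimate on $\mathbb{E}\|v^\epsilon\|_{L^{2g+4}}^{2g+4}$, which is exactly what Proposition~\ref{proposition/lp estimates} supplies. Your energy method may be salvageable, but not with the ``uniform bound + LDP tightness'' justification you give.

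\textbf{For part i).} The proposed reduction---Chebyshev plus truncation on $\{\|v^\epsilon\|_{H^{-\beta}}>M_\epsilon\}$ with probability controlled by exponential tightness from the LDP---does not address the actual difficulty. The LDP of \cite{fehrman-gess-large-deviations-for-conservative} is for $\rho^\epsilon$, not for $v^\epsilon=\epsilon^{-1/2}(\rho^\epsilon-\bar\rho)$, and gives no useful control on the latter. The paper's route is entirely different: a Moser iteration (Proposition~\ref{proposition/moser iteration}) shows that $\rho^{\epsilon,\eta}$ stays above $\delta_\eta$ with probability $1-o(1)$ along \eqref{equation/scaling regime for noise}; on this event, an enhanced pathwise uniqueness (Lemma~\ref{proposition/enhanced pathwise uniqueness}) forces $\rho^\epsilon=\rho^{\epsilon,\eta}$, where $\phi^\eta,\nu^\eta,\sigma^\eta$ are smoothed only near zero and coincide with the true coefficients on $[\delta_\eta,\infty)$. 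This reduces part i) to part ii) applied to the smoothed equation, whose coefficients now satisfy Assumption~\ref{assumption/assumption C2}. Your entropy-dissipation idea for the region $\{\rho^\epsilon<\rho_0/2\}$ does not substitute for this, because the relevant bound $\mathbb{E}\int|\nabla\sigma(\rho^\epsilon)|^2$ in \eqref{equation/entropy dissipation estimate} itself blows up with $\|F_i^\epsilon\|_\infty$ and gives no smallness.
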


The results of this paper and \cite{fehrman-gess-large-deviations-for-conservative} have immediate consequences on the simulation of several particle systems.
Indeed, upon choosing the right coefficients $\phi$, $\nu$ and $\sigma$ according to the hydrodynamics of the particle process considered, equation \eqref{equation/generalized dean--kawasaki equation 1} provides a continuum model which correctly captures the fluctuations of the particle system up to order one and exhibits the same nonequilibrium large deviations.
This is discussed in detail in Section \ref{section/comments on the literature}.

\subsection{Structure of the paper}
\label{section/structure of the paper}

The paper is organized as follows.
We end this section with a discussion of the applications to particle systems, followed by an overview of our methods and of the relevant literature.
In Section \ref{section/Preliminaries and solution theory} we first lay out our notations and assumptions; then we present the solution theory for the equations involved: the zero noise limit \eqref{equation/hydrodynamic limit equation 1}, the stochastic PDE \eqref{equation/OU equation 1} for the asymptotic fluctuations and the original SPDE \eqref{equation/generalized dean--kawasaki equation 1}.
In Section \ref{section/The central limit theorem for fluctuations} we finally prove our central limit theorem.
First we prove the stronger version \eqref{equation/rate of convergence 1} for coefficients satisfying the extra Assumption \ref{assumption/assumption C2}; then we extend this to rougher coefficients only satisfying Assumption \ref{assumption/assumption C2 weak} and prove the central limit theorem in probability \eqref{equation/clt in probability 1}.


\subsection{Applications to particle systems, methods and relevant literature}
\label{section/comments on the literature}


\noindent
\emph{\textbf{Applications to particle systems.}}

\noindent
A byproduct of the above results is that the SPDE \eqref{equation/generalized dean--kawasaki equation 1} provides a continuum model correctly simulating the fluctuations of several conservative particle systems up to order one and exhibiting the same nonequilibrium large deviations.
Consider for example a symmetric zero range process on the torus with mean local jump rate $\phi$ and slowly varying initial state $\rho_0$.
As the number $n$ of particles increases, the parabolically rescaled empirical measure $\mu^n$ converges in probability, weakly in the sense of measure, to its hydrodinamic limit: the deterministic measure $\bar{\rho}\,\de x$ solving
\begin{equation}\label{equation/application to particles/hydrodynamic limit}
    \partial_t\Bar{\rho}=\Delta\phi(\bar{\rho})\,\,\,\text{in } \mathbb{T}^d\times(0,T),\quad \bar{\rho}(\cdot,0)=\rho_0\,\,\,\text{in } \mathbb{T}^d\times\{0\}.
\end{equation}
In this way, solutions to the porous media equation \eqref{equation/application to particles/hydrodynamic limit} describe the dynamics of the particle process up to order zero (see e.g. \cite[Chapter 5]{kipnis-landim-scaling-limits}).

In order to reach higher order continuum approximations, it is necessary to incorporate the fluctuations present in $\mu^n$.
A detailed study of the nonequilibrium central limit fluctuations is presented in \cite[Chapter 11]{kipnis-landim-scaling-limits}, where it is shown that the measures
\begin{equation}
    \label{equation/application to particles/fluctuations}
    \text{m}^n\coloneqq\sqrt{n}\left(\mu^n-\bar{\rho}\,dx\right)
\end{equation}
converge as $n\to\infty$ to the solution of the linear stochastic PDE
\begin{equation}\label{equation/application to particles/OU equation}
    \partial_t\text{m}=\Delta\big(\dot{\phi}(\bar{\rho})\,\text{m}\big)-\nabla\cdot\Big(\phi^{\nicefrac{1}{2}}(\bar{\rho})\circ \dot{\xi}\Big)\,\,\,\text{in } \mathbb{T}^d\times(0,T),\quad \text{m}(\cdot,0)=0\,\,\,\text{in } \mathbb{T}^d\times\{0\},
\end{equation}
for $\xi$ a space-time white noise and $\bar{\rho}$ solving \eqref{equation/application to particles/hydrodynamic limit}.
Since solutions to \eqref{equation/application to particles/OU equation} are not function-valued the convergence is in the sense of distributions.
Furthermore, the results of \cite{benois-kipnis-landim-large-deviations-from-the-hydrodynamical-limit-of-mean-zero-asymmetric} and \cite{fehrman-gess-large-deviations-for-conservative} prove that the non-equilibrium large deviations of $\mu^n$ are described in terms of the rate function 
\begin{equation}
\label{equation/application to particles/rate function }
    I_{\rho_0}(\rho)=\inf\left\{\|g\|^2_{L^2(\TT^d\times[0,T])^d} \,:\, \partial_t\rho=\Delta\phi(\rho)-\nabla\cdot\left(\phi^{\nicefrac{1}{2}}(\rho)\,g\right)\,\,\text{and}\,\, \rho(\cdot,0)=\rho_0\right\}.
\end{equation}

Consider now the the nonlinear stochastic PDE
\begin{equation}\label{equation/application to particles/continuum model 1}
    \partial_t \rho^n=\Delta\phi(\rho^n)-\frac{1}{\sqrt{n}}\nabla\cdot\left(\phi^{\nicefrac{1}{2}}(\rho^n)\circ\dot{\xi}^n\right),\quad\rho^n(\cdot,0)=\rho_0,
\end{equation}
for suitable approximations $\{\xi^n\}_{n\in\N}$ of an $\R^d$-valued space-time white noise (cf. Assumption \ref{assumption/assumption N2}). 
Our main result Theorem \ref{theorem/clt for fluctuations 3} ensures that, as $n\to\infty$, the small noise fluctuations $\sqrt{n}(\rho^n-\bar{\rho})$ converge to the solution $\text{m}$ of \eqref{equation/application to particles/OU equation}.
Therefore, this and \eqref{equation/application to particles/fluctuations}-\eqref{equation/application to particles/OU equation} give us the expansion
\begin{equation}
    \label{equation/application to particles/first order expansion}
    \mu^n=\rho^n\,dx+\text{o}\left(\frac{1}{\sqrt{n}}\right),
\end{equation}
which now correctly captures the fluctuations of $\mu^n$ up to order one.
Furthermore $\rho^n$ also features the same large deviation rate function \eqref{equation/application to particles/rate function } of $\mu^n$, as demonstrated by \eqref{equation/rate function 1} from \cite[Theorem 6.8]{fehrman-gess-large-deviations-for-conservative}.

Equation \eqref{equation/application to particles/continuum model 1} is an approximation for the formal nonlinear SPDE
\begin{equation}\label{equation/application to particles/continuum model 2}
    \partial_t \rho^n=\Delta\phi(\rho^n)-\frac{1}{\sqrt{n}}\nabla\cdot\left(\phi^{\nicefrac{1}{2}}(\rho^n)\circ\dot{\xi}\right).
\end{equation}
The approximation is needed because of the irregularity of the space-time white noise $\xi$.
Indeed, equation \eqref{equation/application to particles/continuum model 2} is supercritical in the language of regularity structures \cite{Hairer-regulairity-structures}; its intrinsic ill-posedness and negative results have been discussed in the seminal work \cite{lehmann_konarovsky_vonrenesse_On_deankawasaki_dynamics_smooth_drift}.
In fact, it can also be argued (see e.g. \cite[Section 3]{giacomin-lebowitz-presutti-Deterministic-and-stochastic-hydrodynamic-equations-arising-from-simple-microscopic-model-systems}) that the microscopic particle system comes with a typical correlation length for the noise, like the grid size, which leads to consider equation \eqref{equation/application to particles/continuum model 1} for some space correlated noise $\xi^n$ instead of equation \eqref{equation/application to particles/continuum model 2}.

Finally, it is worth pointing out that one could also consider the simpler first order expansion
\begin{equation}
    \label{equation/application to particles/first order expansion fake}
    \mu^n=\bar{\rho}\,\de x+\frac{1}{\sqrt{n}}\text{m}+\text{o}\left(\frac{1}{\sqrt{n}}\right),
\end{equation}
which follows immediately from \eqref{equation/application to particles/fluctuations}-\eqref{equation/application to particles/OU equation}.
However $\bar{\rho}^n\coloneqq\bar{\rho}\,\de x+\frac{1}{\sqrt{n}}\text{m}$ exhibits a rate function different from \eqref{equation/application to particles/rate function }, given by
\begin{equation}
\label{equation/wrong rate function for first order expansion}
    \bar{I}_{\rho_0}(\rho)=\inf\left\{\|g\|^2_{L^2(\TT^d\times[0,T])^d} \,:\, \partial_t\left(\rho-\bar{\rho}\right)=\Delta\left(\dot{\phi}(\bar{\rho})(\rho-\bar{\rho})\right)-\nabla\cdot\left(\phi^{\nicefrac{1}{2}}(\bar{\rho})\,g\right),\quad \rho(\cdot,0)=\rho_0\right\},
\end{equation}
as follows from Schilder's theorem and a formal application of the contraction principle.
Therefore this expansion does not capture the large deviations of $\mu^n$ and yields an imprecise approximation.

In conclusion we remark that completely analogous results hold for more general zero range processes and exclusion processes.
Essentially the results hold for all the particle processes whose fluctuations can be described through SPDEs falling in the framework above. 
We refer the reader to the monographs \cite{kipnis-landim-scaling-limits,spohn-large-scale-dynamics} and the surveys \cite{giacomin-lebowitz-presutti-Deterministic-and-stochastic-hydrodynamic-equations-arising-from-simple-microscopic-model-systems} for results in this sense.


\bigskip

\noindent
\emph{\textbf{Overview of the methods.}}

\noindent
As mentioned above, a concept of solution for \eqref{equation/generalized dean--kawasaki equation 1} with an actual white noise seems to be out of hand.
If the noise $\xi^\epsilon$ has some space correlation (cf. Assumption \ref{assumption/assumption N1}), the well-posedness of equation \eqref{equation/generalized dean--kawasaki equation 1} is proved in \cite{fehrman-gess-Well-posedness-of-the-Dean-Kawasaki-and-the-nonlinear-Dawson-Watanabe-equation-with-correlated-noise}.
The main difficulties in applying a classical concept of weak solutions are due to nonlinearities that are possibly only $\nicefrac{1}{2}$-H\"older continuous and singular terms which are not even known to be locally integrable.
These issues are addressed by the the notion of \emph{stochastic kinetic solution} (cf. Definition \ref{definition/stochastic kinetic solution}).
After rewriting the Stratonovich equation \eqref{equation/generalized dean--kawasaki equation 1} in the equivalent It\^o form, the equation is recast in its \emph{kinetic formulation}: an equation in the original space and time variables and in a new additional \emph{velocity} variable, corresponding to the magnitude of the solution.
Then a renormalization away from zero and infinity is introduced: solutions are required to satisfy the PDE only after cutting out small and large values in order to enforce the local integrability and further regularity of the nonlinear terms. 
In fact, when the equation coefficients are nice enough, this renormalization is not even needed and stochastic kinetic solutions satisfy the equation in a classical weak sense (cf. Definition \ref{definition/weak solution to generalized dean kawasaki} and Proposition \ref{proposition/kinetic solutions implies weak solution to deak-kawasaki}).
Furthermore, stochastic kinetic solutions depend continuously on the equation coefficients in a suitable sense (cf. Proposition \ref{proposition/kinetic solutions of dean-kawasaki depend continuously on the coefficients}).
These two properties will be crucial in our arguments.
We refer the reader to \cite[Section 1.1]{fehrman-gess-Well-posedness-of-the-Dean-Kawasaki-and-the-nonlinear-Dawson-Watanabe-equation-with-correlated-noise} for more details on their methods.

The small noise large deviations of \eqref{equation/generalized dean--kawasaki equation 1} are analyzed in \cite{fehrman-gess-large-deviations-for-conservative}.
Schilder's Theorem and a formal application of the contraction principle lead to guess the rate function \eqref{equation/rate function 1}.
After a careful analysis of the \emph{energy critical} PDE featuring in the definition of $I_{\rho_0}$, this ansatz is made rigourous via the so-called \emph{weak approach to large deviations} \cite{DupEll1997,budhiraja-dupuis-maroula-large-deviations,BudDup2019,BudDupSal}.

We finally come to the criticalities and the methods of this paper.
A first difficulty is that the compactness of the noise-to-solution map of \eqref{equation/generalized dean--kawasaki equation 1} is not enough to show convergence of the fluctuations $v^\epsilon=\epsilon^{-\nicefrac{1}{2}}(\rho^\epsilon-\bar{\rho})$. 
Indeed, although it is true that $\rho^\epsilon\to\bar{\rho}$ in a suitable sense, the previous arguments yield no information about the convergence rate; whereas we need to quantify this to compensate for the blowing up factor $\epsilon^{-\nicefrac{1}{2}}$. 
Moreover, the high nonlinearity and degeneracy of the equation hinder the application of Fourier analysis.
Finally, the renormalization away from small and large values in the kinetic formulation of the equation and the possible lack of regularity coming from the degeneracy of the diffusion make it difficult to exploit the equation for $\rho^\epsilon$ in our computations.
The picture is further complicated by the irregularity and unboundedness of the noise coefficient $\sigma$.

To address these issues, we consider approximating versions 
of equation \eqref{equation/generalized dean--kawasaki equation 1} with smoothed coefficients $\phi^\eta,\nu^\eta,\sigma^\eta$ indexed by a parameter $\eta\in(0,1)$, for which a stronger notion of solution is available (cf. Definition \ref{definition/weak solution to generalized dean kawasaki}).
We always work with the corresponding solutions $\rho^{\epsilon,\eta}$, which have enough regularity to justify our computations, and then pass the results to the true solutions $\rho^\epsilon$ either with Fatou's Lemma or with probabilistic arguments based on analytical estimates.

We begin with the stronger version \eqref{equation/rate of convergence 1} of the CLT for coefficients satisfying the additional Assumption \ref{assumption/assumption C2}.
This already applies to the model case $\phi(z)=z^{m}$, $\sigma(z)=z^{\frac{m}{2}}$ in the regime $m\in[2,\infty)$, or to the linear case $\phi(z)=z$ for suitable noise coefficients $\sigma$.

First we obtain estimates on the $L^h$-norm of the fluctuations $v^\epsilon$, for some $h$ big enough depending on the nonlinearities involved.
Since $v^\epsilon$ is converging to $v$, which is only distribution valued, we expect these estimates to blow up as $\epsilon\to0$ and we want to quantify the explosion rate in terms of the small parameter $\epsilon$.
Formally, this is achieved by applying It\^o formula to the power $|v^\epsilon|^h$ and exploiting the equation satisfied by $v^\epsilon$.
Equipped with these moment estimates, we estimate the Fourier coefficients of $v^{\epsilon}-v$ and show that $\|v^\epsilon-v\|_{L^{\infty}([0,T];H^{-\beta}(\T^d))}$ vanishes in $L^2(\Omega)$ as $\epsilon\to0$.

To consider even rougher coefficients, including $\sigma(z)=\sqrt{z}$ and $\phi(z)=z^m$ for every $m\in(0,\infty)$, some extra work is needed.
With a Moser iteration argument, adapted from \cite{dirr-fehrman-gess-conservative-stochastic-pde-and-fluctuations-of-the-symmetric}, we show that the solutions $\rho^\epsilon$ of \eqref{equation/generalized dean--kawasaki equation 1} are bounded from below by the minimum of the \emph{positive} initial data $\rho_0$ with increasing probability as $\epsilon\to0$ (cf. Proposition \ref{proposition/moser iteration} and Corollary \ref{corollary/solutions are bounded by initial data with high probability}).
In particular, solutions $\rho^\epsilon$ stay away from the irregularities at zero of $\phi$, $\nu$, $\sigma$ on the events $\omega$ of a certain subset $\Omega^\epsilon\subseteq\Omega$ where they satisfy $\rho^\epsilon(x,t,\omega)\in[\inf\rho_0-\delta,\infty)$ for a.e. $x$ and $t$.
A refinement of the pathwise uniqueness result for \eqref{equation/generalized dean--kawasaki equation 1} shows that on $\Omega^\epsilon$ the solution $\rho^\epsilon$ must coincide with the solution $\rho^{\epsilon,\eta}$ of the smoothed version of equation \eqref{equation/generalized dean--kawasaki equation 1}
with coefficients $\phi^\eta,\nu^\eta,\sigma^\eta$ that match the true coefficients $\phi,\sigma,\eta$ sufficiently away from the irregularity points (cf. Propositition \ref{proposition/enhanced pathwise uniqueness}).
For such solutions $\rho^{\epsilon,\eta}$ we have the above CLT in $L^2(\Omega)$ at our disposal and with standard probabilistic arguments we pass this to a CLT in probability for the true solutions $\rho^\epsilon$.

{\color{red}

}



\bigskip

\noindent
\emph{\textbf{Overview of the literature.}}

\noindent

Linear and nonlinear diffusion equation with different kinds of noise terms have received a lot of attention, from several point of view.  We refer to \cite{fehrman-gess-Well-posedness-of-the-Dean-Kawasaki-and-the-nonlinear-Dawson-Watanabe-equation-with-correlated-noise} for a detailed overview of the literature and we mention only the most recent results on the equation considered here \cite{cornalba_fischer_ingmanns_raithel_density_fluctuations_in_weakly_interacting, fehrman_gess_gvalani_ergodicity,djurdjevac-kremp-perkowski-weak-error-analysis,clini-porous-media-equations,wang_wu_zhang_dean_kawasaki_equation_with_singular_nonlocal_interactions}.  Similarly, we refer to the introduction of \cite{fehrman-gess-large-deviations-for-conservative} for the literature on large deviations principles on related SPDEs.

We collect here instead some results concerning central limit theorems for SPDEs.  Namely, central limit theorems for stochastic heat equations or stochastic wave equations with Lipschitz continuous noise coefficients have been obtained in \cite{Chen_nualart_pu_poincare_inequality_and_central_limit,huang-nualart-viitasaari-a-central-limit-theorem,huang-nualart-viitasaari-gaussian-fluctuations} and in \cite{vences-nualart-zheng-central-limit-theorem-for-the-stochastic-wave} respectively.
A central limit theorem for the heat equation driven by nonlinear gradient noise with H\"older continuous coefficient has been proved by Dirr, Gess and the second author in \cite{dirr-fehrman-gess-conservative-stochastic-pde-and-fluctuations-of-the-symmetric}.
Finally, central limit theorems for primitive equations, a specific type of semilinear evolution equation, in low dimension have been established in \cite{Slavik-large-and-moderate-deviations,zhang-zhou-guo-stochastic-2d-primitive-equations-central-limit,Hu-li-wang-central-limit}.


\section{Preliminaries and solution theory}
\label{section/Preliminaries and solution theory}


\subsection{Hypotheses and notations}
\label{section/hypotheses and notations}

In this section we present our assumptions and notations, and we remark that they apply to a range of relevant cases (see Example \ref{remark/ on assumption N2} and \ref{example/applicability of coefficient assumptions}).
We first take care of the randomness in the equation.
We fix a probability space $\left(\Omega,\left(\F_t\right)_{t\geq0},\p\right)$ with a complete right-continuous filtration $\F_t$ and supporting a countable sequence of $\R^d$-valued independent Brownian motions $\left(B_t^k\right)_{k\in\N}$ and the random initial condition $\rho_0$.

As regards the initial condition we require the following.
\begin{assumption}[Assumptions on the initial data]\label{assumption/assumption I1}
The initial data $\rho_0\in L^1(\Omega;L^1(\T^d))$ is nonnegative, $\F_0$-measurable and satisfies one of the following hypotheses, for $p\geq2$ and $m\geq1$ given in Assumption \ref{assumption/assumption C1} below:
\begin{itemize}
    \item [(i)] $\rho_0\in L^p(\Omega;L^p(\T^d))\cap L^{p+m-1}(\Omega;L^1(\T^d))$;
    \item[(ii)] $\rho_0\in L^{p+m-1}(\Omega;(0,\infty))$ with $\rho\geq r>0$ a.e. for some $r\in(0,\infty)$, i.e. the initial data is a random positive constant.
\end{itemize}
\end{assumption}

We now define the noise sequence $\xi^\epsilon$.
For each $\epsilon>0$, let $F^\epsilon=\left(f_k^{\epsilon}\right)_{k\in\N}$ be a family of continuously differentiable functions on $\T^d$ and define the noise
\begin{equation}
    \label{equation/noise xi^epsilon}
    \xi^\epsilon=\sum_{k\in\N}f_{k}^\epsilon B_t^k.
\end{equation}
It then follows that the Stratonovich equation \eqref{equation/generalized dean--kawasaki equation 1} is formally equivalent to the It\^o equation
\begin{align}\label{equation/generalized dean--kawasaki equation 2}
\begin{split}
    \de\rho^{\epsilon}=\Delta\phi(\rho^\epsilon)\de t-\nabla\cdot\nu(\rho^\epsilon)-\sqrt{\epsilon}\,\nabla\!\cdot\!\left(\sigma(\rho^\epsilon) \,\de\xi^{\epsilon}\right)
    +\frac{\epsilon}{2}\sum_{k=1}^\infty\nabla\cdot\Big(f_k^\epsilon\Dot{\sigma}(\rho^\epsilon)\nabla\big(f_k^\epsilon\sigma(\rho^\epsilon)\big)\Big)\de t,
\end{split}
\end{align}
which can be written in the form
\begin{align}\label{equation/generalized dean--kawasaki equation 3}
\begin{split}
\de\rho^{\epsilon}=
\Delta\phi(\rho^\epsilon)\de t-\nabla\cdot\nu(\rho^\epsilon)-\sqrt{\epsilon}\,\nabla\!\cdot\!\left(\sigma(\rho^\epsilon) \,\de\xi^{\epsilon}\right)
+\frac{\epsilon}{2}\nabla\!\cdot\!\left(F_1^\epsilon \left(\Dot{\sigma}(\rho^\epsilon)\right)^2\nabla\rho^\epsilon+\dot{\sigma}(\rho^\epsilon)\sigma(\rho^\epsilon)F_2^\epsilon\right)\de t,
\end{split}
\end{align}
for $F_1^\epsilon:\T^d\to\R$ and $F_2^\epsilon:\T^d\to\R^d$ defined by
\begin{equation}
F_1^\epsilon(x)=\sum_{k=1}^\infty (f_k^\epsilon)^2(x)\;\;\textrm{and}\;\;F_2^\epsilon(x)=\sum_{k=1}^\infty f_k^\epsilon(x)\nabla f_k^\epsilon(x).
\end{equation}
We make the following assumptions on the noise.

\begin{assumption}[Assumptions on the noise]\label{assumption/assumption N1}
For each $\epsilon>0$, we assume that the sums $\{F_i^\epsilon\}_{i=1,2,3}$ defined by
\begin{equation}\label{equation/definition of f1 f2 f3}
F_1^\epsilon=\sum_{k=1}^\infty (f_k^\epsilon)^2\;\;\textrm{and}\;\;F_2^\epsilon=\frac{1}{2}\sum_{k=1}^\infty \nabla (f^\epsilon_k)^2\;\;\textrm{and}\;\;F_3^{\epsilon}=\sum_{k=1}^\infty\left|\nabla f_k^\epsilon\right|^2
\end{equation}
are continuous on $\T^d$ -- where the finiteness of $F_1^\epsilon$ and $F_3^\epsilon$ implies the absolute convergence of $F_2^\epsilon$ -- and assume that the divergence of $F_2^\epsilon$ vanishes:
\begin{equation}\label{equation/div f2 vanishes}
\nabla\cdot F_2^\epsilon=\frac{1}{2}\Delta F_1^\epsilon=0.
\end{equation}
\end{assumption}

\begin{remark}
Condition \eqref{equation/div f2 vanishes} is equivalent to the noise being probabilistically stationary in the sense that it has the same law at every point in space, a property satisfied by space-time white noise and all of its standard approximations, such as those presented in Example \ref{remark/ on assumption N2} below.
\end{remark}

As mentioned above, in our analysis of the small noise behaviour of \eqref{equation/generalized dean--kawasaki equation 1} we let the noise terms $\xi^\epsilon$ converge to some noise $\xi$ as $\epsilon\to0$. 
Precisely, we make the following assumption.

\begin{assumption}[Assumptions on the noise sequence]\label{assumption/assumption N2}
For each $\epsilon>0$ the family $F^{\epsilon}$ satisfies Assumption \ref{assumption/assumption N1} and there exists a family of functions $F=F^0=(f_m)_{m\in\N}\subseteq L^2(\T^d)$ such that, for some $C>0$,
\begin{equation}
    \label{equation/ banach steinhaus on the noise}
    \sum_{m=1}^{\infty}(f_m^\epsilon,u)_{L^2(\T^d)}^2\leq C\,\|u\|_{L^2(\T^d)}^2\quad\forall u\in L^2(\T^d)\quad \forall \epsilon\geq0,
\end{equation}
and such that
\begin{equation}\label{equation/ convergence of the noise sequence on C1 functions}
    \lim_{\epsilon\to0}\sum_{m=1}^{\infty}(f_m-f_m^\epsilon,u)_{L^2(\T^d)}^2=0\quad\forall u\in L^2(\T^d).
\end{equation}
\end{assumption}

\begin{example}[Applicability of the noise assumptions]\label{remark/ on assumption N2}
Assumption \ref{assumption/assumption N2} serves to deal with cylindrical Wiener processes for which the space $L^2(\T^d)$ is too small to live in (see e.g. \cite[Chapter~4]{da_prato_zabczyk_1992}) and for which the Dean--Kawasaki equation \eqref{equation/generalized dean--kawasaki equation 1} might be ill-posed.
For example, this is the case for a space-time white noise $\xi$ on $L^2(\T^d)$, which admits the spectral representation
\begin{equation}
    \label{equation/space time white noise series representation}
    \xi=\sum_{m=1}^\infty f_m(x) B^m_t,
\end{equation}
for any orthonormal basis $(f_m)_{m\in\N}$ of $L^2(\T^d)$.
In this setting, important examples of the approximating sequence of noise terms include spatial convolutions $\xi^\epsilon=\xi*\varphi^\epsilon$, that is $f_m^\epsilon=f_m*\varphi^\epsilon$ for a suitable mollifier $\varphi$; 
ultraviolet cut-offs like 
\begin{equation}\label{equation/ ultraviolet cutoff}
    \xi^\epsilon=\sum_{\substack{k\in\Z^d,\, |k|\leq M_{\epsilon}}}e^{i2\pi k\cdot x}\, B^k_t,
\end{equation}
for any arbitrary sequence $(M_{\epsilon})_{\epsilon>0}$ increasing to infinity as $\epsilon\to0$, where for simplicity we used the orthonormal basis of complex exponentials, and we have $f_k^\epsilon=e^{i2\pi k\cdot x}$ if $|k|\leq M_{\epsilon}$ and $f_k^{\epsilon}=0$ otherwise;
and weighted expansions like
\begin{equation}
\xi^{a^\epsilon} = \sum_{k\in\Z^d} a_k^\epsilon\, e^{i2\pi k\cdot x}\, B_t^k,
\end{equation}
for coefficients $a=(a_k^\epsilon)_{k\in \Z^d}$ satisfying $\sum_{k\in\Z^d}|k|^2|a_k^\epsilon|^2<\infty$ and $\lim_{\epsilon\to0}a_k^\epsilon=1$.
Indeed, an explicit computation gives, for the mollification,
\begin{equation}
    F_1^\epsilon=\frac{1}{\epsilon^d}\int_{\T^d}\!|\varphi(y)|^2\,dy,\,\,F_2^\epsilon=0,\,\, F_3^\epsilon=\frac{1}{\epsilon^{d+2}}\int_{\T^d}\left|\nabla\varphi(y)\right|^2\,dy\,\,\textrm{and}\,\sum_{|m|=0}^{\infty}\!(f_m-f_m^\epsilon,u)_{L^2}^2=\|u-u*\varphi^\epsilon\|_{L^2};
\end{equation}
for the ultraviolet cut-off,
{\small
\begin{equation}
    F_1^\epsilon(x)\!=\!\!\sum_{|k|=0}^{M_\epsilon}\!f_k(x)^2,\,\, F_2^\epsilon(x)\!=\!\!\frac{1}{2}\!\!\sum_{|k|=0}^{M_\epsilon}\!\!\nabla \left(f_k\right)^2,\,\, F_3^\epsilon(x)\!=\!\!\sum_{|k|=0}^{M_\epsilon}\!\!|\nabla f_k(x)|^2\,\,\textrm{and}\,\sum_{|k|=0}^{\infty}\!\!(f_k-f_k^\epsilon,u)_{L^2}^2\!=\!\!\sum_{|k|>M_\epsilon}^{\infty}\!(f_k,u)_{L^2}^2;
\end{equation}
}
and for the weighted expansion,
{\small
\begin{equation}   F_1^{a^\epsilon}=\sum_{m\in\Z^d}|a_m^\epsilon|^2,\,\,\,F_2^{a^\epsilon}=0,\quad F_3^{a^\epsilon}=\sum_{m\in\Z^d}|m|^2|a_m^\epsilon|^2\,\,\textrm{and}\,\,\sum_{m\in\Z^d}(f_m-f_m^\epsilon,u)_{L^2}^2=\sum_{m\in\Z^d}|a_m^{\epsilon}-1|^2(f_m,u)_{L^2}^2.
\end{equation}
}
\end{example}

Finally we collect our assumptions on the coefficients $\phi$, $\nu$, $\sigma$, and discuss their meaning and applicability.
The following set of assumptions coincides exactly with \cite[Assumption~4.1 and~5.2]{fehrman-gess-Well-posedness-of-the-Dean-Kawasaki-and-the-nonlinear-Dawson-Watanabe-equation-with-correlated-noise}.
Together with Assumption \ref{assumption/assumption I1} and \ref{assumption/assumption N1} on the initial data and the noise, it guarantees the well-posedness of equation \eqref{equation/generalized dean--kawasaki equation 2} in the sense of stochastic kinetic solutions (see Theorem \ref{theorem/existence and uniquess of kinetic solutions to dean-kawasaki}).

\begin{assumption}[Assumptions on the coefficients for the well-posedness of the equation]\label{assumption/assumption C1}
Let $\phi,\sigma\in C([0,\infty))\cap C^{1,1}_{\textrm{loc}}((0,\infty))$ and $\nu\in C([0,\infty))^d\cap C^{1,1}_{\textrm{loc}}((0,\infty))^d$, let $p\in[2,\infty)$ and $m\in[1,\infty)$, and for every $q\geq 2$ let $\Theta_{\phi,q}\in C([0,\infty))\cap C^1 ((0,\infty))$ be the unique function satisfying
\begin{equation}
    \label{equation/thetha phi p}
    \Theta_{\phi,q}(0)=0\quad\text{and}\quad\dot{\Theta}_{\phi,q}(z)=z^{\frac{q-2}{2}}\left(\dot{\phi}(z)\right)^{\frac{1}{2}}.
\end{equation}
Assume that the following eight conditions are satisfied.
\begin{itemize}
    \item [(i)] We have $\phi(0)=\sigma(0)=0$ and $\dot{\phi}>0$ on $(0,\infty)$.
    \item [(ii)] There exists $c\in(0,\infty)$ such that
        \[\phi(z)\leq c\,(1+z^m)\;\;\text{for every}\;\;z\in[0,\infty).\]
    \item[(iii)] There exists $c\in(0,\infty)$ such that 
        \[\limsup_{z\to0^+}\frac{\sigma^2(z)}{z}\leq c,\]
    which in particular implies that $\sigma(0)=0$.
    \item[(iv)] There exists $c\in(0,\infty)$ such that
        \begin{equation}\label{equation/assumption c1 v} \sup_{z'\in[0,z]}\sigma^2(z')\leq c\,(1+z+\sigma^2(z))\;\;\text{for every}\;\;z\in[0,\infty).
        \end{equation}
    \item[(v)] There exists $c\in(0,\infty)$ such that
        \begin{equation}\label{equation/assumption c1 vi}
        \textcolor{black}{
        \sup_{z'\in[0,z]}\nu^2(z')\leq c\,(1+z+\nu^2(z))\;\;\text{for every}\;\;z\in[0,\infty).}
        \end{equation}
    \item[(vi)] For $\Theta_{\phi,p}$ defined in \eqref{equation/thetha phi p}, either there exists $c\in(0,\infty)$ and $\gamma\in[0,\nicefrac{1}{2}]$ such that
        \begin{equation}\label{equation/assumption c1 vii A}
        \left(\dot{\Theta}_{\phi,p}(z)\right)^{-1}\leq c\, z^\gamma\;\;\text{for every}\;\;z\in(0,\infty),
        \end{equation}
    or there exists $c\in(0,\infty)$ and $q\in[1,\infty)$ such that, for every $z,z'\in[0,\infty)$,
        \begin{equation}\label{equation/assumption c1 vii B}
        \left|\,z-z'\,\right|^{\,q}\leq c\left|\Theta_{\phi,p}(z)-\Theta_{\phi,p}(z')\right|^2.
        \end{equation}
    \item[(vii)] For $\Theta_{\phi,2}$ and $\Theta_{\phi,p}$ defined in \eqref{equation/thetha phi p}, there exists $c\in(0,\infty)$ such that, for every $z\in[0,\infty)$,
        \begin{equation}\label{aa_5050}
        \sigma^2(z)\leq c\,\left(1+z+\Theta^2_{\phi,2}(z)\right)\;\;\text{and}\;\;z^{p-2}\sigma^2(z)\leq c\left(1+z+\Theta^2_{\phi,p}(z)\right).
        \end{equation}
    \item[(viii)] For every $\delta\in(0,1)$ there exists $c_\delta\in(0,\infty)$ such that, for every $z\in(\delta,\infty)$,
        \begin{equation}\label{aa_121212}
        \frac{[\dot{\sigma}(z)]^4}{\dot{\phi}(z)}+(\sigma(z)\dot{\sigma}(z))^2+\textcolor{black}{|\nu(z)|}+\dot{\phi}(z)\leq c_\delta\,\left(1+z+\Theta^2_{\phi,p}(z)\right).
        \end{equation}
\end{itemize}
\end{assumption}

\begin{remark}
With regards to Assumption \ref{assumption/assumption C1}, in Condition (i) the assumption $\phi(0)=0$ is just a normalization, whereas the assumption $\sigma(0)=0$ is crucial to prevent the noise from dragging solution $\rho^\epsilon$ towards negative values.
Condition (ii) amounts to polynomial growth of $\phi$ at infinity.
Condition (iii) requires that $\sigma(z)$ goes to zero as $z\to0$ at least as fast as $\sqrt{z}$.
Condition (iv) amounts to an assumption on the magnitude of the oscillations of $\sigma$ at infinity, regardless of their frequency (see Example \ref{example/applicability of coefficient assumptions} below).
For example, it is satisfied if $\sigma^2$ is monotone or if $\sigma^2$ grows linearly at infinity or if the oscillations of $\sigma$ grow linearly at infinity. 
Identical considerations holds for the analogous condition (v) on $\nu$.
Condition (vi), both in the form \eqref{equation/assumption c1 vii A} or \eqref{equation/assumption c1 vii B}, corresponds to a regularity assumption on $\phi$: specifically, H\"older continuity of the inverse of the resulting function $\Theta_{\phi,p}$.
Condition (vii) amounts to a growth condition at infinity on $\sigma$, with the aim that one of the convective terms, or better one of the resulting It\^o correction convective terms, is somehow dominated by the diffusion, namely by $\Theta_{\phi,p}$.
Similarly, Condition (viii) is a hypothesis on the growth away from zero of the convective terms, either the deterministic term $\nu$ or other It\^o correction terms involving $\sigma$.
\end{remark}

Assumption \ref{assumption/assumption C1} on the coefficients, together with Assumption \ref{assumption/assumption I1} and \ref{assumption/assumption N1} on the initial data and the noise, is sufficient to guarantee existence and uniqueness for equation \eqref{equation/generalized dean--kawasaki equation 3}.
To establish the CLT in probability \eqref{equation/clt in probability 1} we just need some more control on the coefficients at infinity, regardless of their behaviour near the irregularity at zero.

\begin{assumption}[Assumptions on the coefficients for the CLT in probability]\label{assumption/assumption C2 weak}
We assume that $\phi$, $\nu$, $\sigma$ satisfy the following, for $p\in[2,\infty)$ given in Assumption \ref{assumption/assumption C1}.

\begin{itemize}
    \item[(i)] For some $k\in[0,0\vee\frac{p-4}{4}]$, for every $\delta>0$ there exists $c_{\delta}\in(0,\infty)$ such that
    \begin{equation}
        |\sigma(z)|\leq c_{\delta}\,(1+z^{k+1})\quad\text{and}\quad
        |\dot{\sigma}(z)|\leq c_{\delta}\,(1+z^k) \quad \forall\,\,z\in(\delta,\infty).
    \end{equation}
    \item[(ii)] We have $\phi,\nu\in C^2_{\textrm{loc}}((0,\infty))$ and there exists $g\in[0,0\vee \frac{p}{2(k+1)}-2]$ such that, for every $\delta>0$ there exists $c_{\delta}\in(0,\infty)$ such that
    \begin{equation}
        |\ddot{\phi}(z)|+|\ddot{\nu}(z)|\leq c_{\delta}\,(1+z^g) \quad \forall\,\,z\in(\delta,\infty).
    \end{equation}
\end{itemize}
\end{assumption}

To establish the stronger version \eqref{equation/rate of convergence 1} of the CLT, we will replace Assumption \ref{assumption/assumption C2 weak} with the following stronger version.
It is essentially the requirement that Assumption \ref{assumption/assumption C2 weak} holds uniformly on $(0,\infty)$ up to the irregularity at zero.
We stress that \emph{this assumption is NOT needed} for the general CLT in probability \eqref{equation/clt in probability 1}.

\begin{assumption}[Assumptions on the coefficients for the CLT in $L^2(\Omega)$]\label{assumption/assumption C2}
We assume that $\phi$, $\nu$, $\sigma$ satisfy the following, for some constant $c\geq0$, for $p\in[2,\infty)$ given in Assumption \ref{assumption/assumption C1}.

\begin{itemize}
    \item[(i)] For some $\theta\in(0,\frac{1}{2})$ and some $k\in[0,0\vee\frac{p-4}{4}]$, for all $z\in(0,\infty)$,
    \begin{equation}
        |\sigma(z)|\leq c\,(1+z^{k+1}),\quad
        |\dot{\sigma}(z)|\leq c\,(1+z^{-\theta}+z^k)
        \quad\text{and}\quad
        |\sigma(z)\dot{\sigma}(z)|\leq c\,(1+z^{2k+1}).
    \end{equation}
    \item[(ii)] We have $\phi,\nu\in C^2((0,\infty))$ and there exists $g\in[0,0\vee\frac{p}{2(k+1)}-2]$ such that, for all $z\in(0,\infty)$,
    \begin{equation}
        |\ddot{\phi}(z)|+|\ddot{\nu}(z)|\leq c\,(1+z^g).
    \end{equation}
\end{itemize}
\end{assumption}

\begin{remark}
Assumption \ref{assumption/assumption C2 weak}(i) or \ref{assumption/assumption C2}(i) serves to control the convective term $\nabla\!\cdot\!(\sigma(\rho^\epsilon)\circ\xi^\epsilon)$, so that it is dominated by the diffusion, and indeed it might be replaced by the somewhat more explicit conditions $\left|\sigma(z)\right|\leq c\,(1+\phi^{\nicefrac{1}{2}}(z))$ and $\left|\dot{\sigma}(z)\right|\leq c\,(1+(\phi^{\nicefrac{1}{2}})^\prime(z))$ for all $z\in(0,\infty)$.
Assumption \ref{assumption/assumption C2 weak}(ii) or \ref{assumption/assumption C2}(ii) corresponds to polynomial growth at infinity, and also regularity near zero for \ref{assumption/assumption C2}(ii), of the diffusion nonlinearity $\phi$ and of the deterministic convective term $\nu$.
It is needed to recast the diffusive term $\Delta\big(\phi(\rho^\epsilon)-\phi(\Bar{\rho})\big)$, or the convective term respectively, of the nonequilibrium fluctuation $v^\epsilon$ in \eqref{equation/fluctuation equation 1} in terms of the fluctuation $v^\epsilon$ itself.
\end{remark}

\begin{example}[Applicability of the coefficient assumptions] \label{example/applicability of coefficient assumptions}
In the porous media case $\phi(z)=z^{m_0}$ all the conditions involving $\phi$ in Assumption \ref{assumption/assumption C1} and \ref{assumption/assumption C2 weak} are verified in the full regime $m_0\in(0,\infty)$.
In this case, the function $\Theta_{\phi,p}$ defined in \eqref{equation/thetha phi p} is given for a constant $c_{p,m_0}\in(0,\infty)$ by
\[\Theta_{\phi,p}(z)=c_{p,m_0}\,z^{\frac{m_0+p-1}{2}}.\]
The conditions are satisfied by taking $m=1$, $p=2$, $g=k=0$ and choosing option \eqref{equation/assumption c1 vii A} with $\gamma=\frac{1-m_0}{2}$ when $m_0\in(0,1)$, and by taking $m=m_0$, any $p\geq 4\vee m_0^2$, $k=0\vee\frac{m_0-2}{2}$, $g=0\vee m_0-2$ and choosing option \eqref{equation/assumption c1 vii B} with $q=m_0+p-1$ when $m_0\in[1,\infty)$.

In the important model case $\phi(z)=z^{m_0}$, $\sigma(z)=\phi^{\nicefrac{1}{2}}(z)=z^{\frac{m_0}{2}}$, with $\nu=0$ for simplicity, all the conditions in Assumption \ref{assumption/assumption C1} and \ref{assumption/assumption C2 weak} are verified for any $m_0\in[1,\infty)$ with the choices of $m$, $p$, $k$, $g$ given above.

In fact, Assumption \ref{assumption/assumption C2 weak} and all the conditions in Assumption \ref{assumption/assumption C1} except for condition (iii) are verified also for $m_0\in(0,1)$, with the corresponding choices of $m$, $p$, $k$, $g$, $\gamma$.
Only condition (iii) in Assumption \ref{assumption/assumption C1} breaks down.
Indeed, already to prove well-posedness of \eqref{equation/generalized dean--kawasaki equation 1} we need the noise coefficient $\sigma$ to be $\frac{1}{2}$-H\"older near the irregularity point zero.

We also comment on Assumption \ref{assumption/assumption C1}(iv) on the oscillations of $\sigma$.
An interesting example satisfying all the conditions in Assumption \ref{assumption/assumption C1} and \ref{assumption/assumption C2 weak}, is given by $\sigma^2(z)=z^{m_0}+z \sin\big(z^h\big)$ for every $m_0,h\in[1,\infty)$.
That is, condition (iv) imposes a condition on the growth of the magnitude of the oscillations of $\sigma^2$ at infinity, but not on the growth of the frequency of the oscillations.

Finally the stronger Assumption \ref{assumption/assumption C2} for the CLT in $L^2(\Omega)$, together with Assumption \ref{assumption/assumption C1}, applies to the model case $\phi(z)=z^{m_0}$, $\sigma(z)=z^{\frac{m_0}{2}}$ in the regime $m_0\in[2,\infty)$, or to the linear case $\phi(z)=z$ provided we consider a different noise coefficient $\sigma$ satisfying the assumptions, for example $\sigma(z)=z^{\frac{1}{s}}$ for some $s\in(0,2)$.
\end{example}

\subsection{The zero noise limit and the generalized Ornstein--Uhlenbeck process}
\label{section/the hydrodynamic limit and the generalized ornstein–-uhlenbeck process}

In this section we collect some well-established results on the well-posedness of the limiting equations \eqref{equation/hydrodynamic limit equation 1} and \eqref{equation/OU equation 1}.
As regards the deterministic limit \eqref{equation/hydrodynamic limit equation 1} we have the following classical theory.
We refer the reader to standard monographs like \cite{vasquez_porous_media} or \cite{Lieberman_parabolic}.

\begin{definition}
\label{definition/hydrodynamic limit weak solution}
A weak solution of \eqref{equation/hydrodynamic limit equation 1} is a function $\bar{\rho}\in L^1([0,T];L^1(\T^d))$ such that $\phi(\bar{\rho}),\nu(\bar{\rho})\in L^1([0,T];L^1(\T^d))$ and such that, for any $t\in[0,T]$ and any $\psi\in C^{\infty}(\T^d)$, 
\begin{align}
\begin{split}
    \int_{\T^d}\!\!\!\bar{\rho}(x,t)\,\psi(x)\,dx=\int_{\T^d}\!\!\!\rho_0(x)\,\psi(x)\,dx
    +\int_0^t\!\!\int_{\T^d}\!\!\!\phi(\bar{\rho})(x,s)\,\Delta\psi(x)\,dx\,ds
    +\textcolor{black}{\int_0^t\!\!\int_{\T^d}\!\!\!\nu(\bar{\rho})(x,s)\,\nabla\psi(x)\,dx\,ds.}
\end{split}
\end{align}
\end{definition}

\begin{theorem}\label{theorem/well-posedness of the hydrodynamic limit}
Under Assumption \ref{assumption/assumption C1}, for any nonnegative $\rho_0\in L^2(\T^d)$ bounded away from zero, there exists a unique weak solution $\Bar{\rho}$ of \eqref{equation/hydrodynamic limit equation 1} in the sense of Definition \ref{definition/hydrodynamic limit weak solution}.
Furthermore the solution satisfies $\bar{\rho}\in L^2([0,T];L^2(\T^d))$ and $\phi(\bar{\rho})\in L^2([0,T];H^1(\T^d))$.
\end{theorem}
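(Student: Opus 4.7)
The plan is to follow the classical recipe for quasilinear parabolic equations on the torus: solve a nondegenerate approximation, extract uniform a priori bounds, pass to the limit via an Aubin--Lions compactness argument, and close with a separate uniqueness proof. For each $\eta\in(0,1)$ I would replace $\phi$ by $\phi^\eta(z)\coloneqq\phi(z)+\eta z$ (so that $\dot\phi^\eta\geq\eta>0$) and mollify $\nu$ to obtain a smooth, globally Lipschitz $\nu^\eta$; together with a smoothed, positive initial datum $\rho_0^\eta$ satisfying $\rho_0^\eta\geq r$ a.e., classical quasilinear parabolic theory (as in \cite{Lieberman_parabolic}) produces a unique smooth solution $\bar\rho^\eta$ of the regularized equation.

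For the $\eta$-uniform estimates, the first key point is that positive constants solve the equation, so a comparison argument for the nondegenerate approximation yields the pointwise lower bound $\bar\rho^\eta\geq r>0$; this is decisive because it survives the passage to the limit and forces the solution to live in a region where $\phi$ is smooth and strictly increasing, allowing the use of the local regularity assumed in Assumption \ref{assumption/assumption C1}. Testing the equation by $\bar\rho^\eta$ makes the convective term vanish---since $\int_{\T^d}\nu^\eta(\bar\rho^\eta)\cdot\nabla\bar\rho^\eta\dx$ is the integral of a perfect divergence---and produces
\begin{equation*}
    \tfrac{1}{2}\|\bar\rho^\eta(t)\|_{L^2(\T^d)}^2+\int_0^t\!\!\int_{\T^d}\dot\phi^\eta(\bar\rho^\eta)\,|\nabla\bar\rho^\eta|^2\dx\,ds=\tfrac{1}{2}\|\rho_0^\eta\|_{L^2(\T^d)}^2,
\end{equation*}
which gives uniform control of $\bar\rho^\eta$ in $L^\infty([0,T];L^2(\T^d))$. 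Testing next against $\phi(\bar\rho^\eta)$, using Assumption \ref{assumption/assumption C1}(ii) and (v)--(viii) to control the resulting convective term by Young's inequality, yields a uniform bound for $\phi(\bar\rho^\eta)$ in $L^2([0,T];H^1(\T^d))$, and reading the equation back then provides a uniform bound for $\partial_t\bar\rho^\eta$ in $L^2([0,T];H^{-1}(\T^d))$.

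Aubin--Lions applied with the compact embedding $L^2(\T^d)\hookrightarrow H^{-1}(\T^d)$ gives strong $L^2([0,T];L^2(\T^d))$ convergence of a subsequence $\bar\rho^\eta\to\bar\rho$; combined with the polynomial growth of $\phi$ and $\nu$, this promotes to convergence of $\phi(\bar\rho^\eta)$ and $\nu(\bar\rho^\eta)$ in $L^1([0,T];L^1(\T^d))$, which is enough to pass to the limit in the weak formulation of Definition \ref{definition/hydrodynamic limit weak solution}. Weak lower semicontinuity of the Dirichlet energy upgrades the bound to $\phi(\bar\rho)\in L^2([0,T];H^1(\T^d))$, while the positivity bound $\bar\rho\geq r$ is inherited by the strong limit. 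For uniqueness, given two weak solutions $\bar\rho_1,\bar\rho_2$ with the same initial datum (both bounded below by $r$), set $w=\bar\rho_1-\bar\rho_2$ and write $\phi(\bar\rho_1)-\phi(\bar\rho_2)=a\,w$, $\nu(\bar\rho_1)-\nu(\bar\rho_2)=b\,w$ with $a$ measurable and uniformly bounded below by a positive constant on the range of the solutions and $b$ bounded; the equation for $w$ is then a linear parabolic problem in divergence form with zero initial data, and a standard Holmgren duality argument against the smooth solution of the adjoint equation forces $w\equiv 0$. The main obstacle I anticipate is carefully closing the $H^1$ estimate on $\phi(\bar\rho^\eta)$ under the full generality of Assumption \ref{assumption/assumption C1}---in particular handling the convective nonlinearity with only the oscillation-type control of condition (v)---where the interplay between the growth of $\nu$ and $\Theta_{\phi,p}$ encoded in (vii)--(viii) seems to be essential.
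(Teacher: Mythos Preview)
The paper does not actually prove this theorem: it is stated as a collection of ``well-established results'' and the reader is referred to the standard monographs \cite{vasquez_porous_media} and \cite{Lieberman_parabolic}. There is thus no proof in the paper to compare your proposal against. Your outline---regularize, derive uniform $L^2$ and $H^1(\phi)$ estimates, extract a limit via Aubin--Lions, and prove uniqueness by a duality/linearization argument exploiting the lower bound $\bar\rho\geq r$---is precisely the kind of standard argument those monographs contain, and it is perfectly appropriate here.

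Two minor remarks. First, in the paper the theorem is only ever invoked with $\rho_0$ a (random) positive constant (Assumption~\ref{assumption/assumption I1}(ii)), in which case $\bar\rho\equiv\rho_0$ trivially solves the equation and the whole statement is immediate; the general $L^2$ initial data is never actually used. Second, your anticipated obstacle---controlling the convective term in the $H^1$ estimate on $\phi(\bar\rho^\eta)$ via Assumption~\ref{assumption/assumption C1}(v)--(viii)---is in fact no obstacle at all: $\int_{\T^d}\nu^\eta(\bar\rho^\eta)\cdot\nabla\phi(\bar\rho^\eta)\dx=\int_{\T^d}\nabla\cdot H(\bar\rho^\eta)\dx=0$ is again a perfect divergence on the torus (take $H_i'(z)=\nu_i^\eta(z)\dot\phi(z)$), so no growth conditions on $\nu$ are needed for that step. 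The genuine care point in your scheme is the compactness: the energy identity gives $\int\!\!\int\dot\phi^\eta(\bar\rho^\eta)|\nabla\bar\rho^\eta|^2$ bounded, not $\|\nabla\bar\rho^\eta\|_{L^2}$ uniformly in $\eta$, so Aubin--Lions should be applied to $\phi(\bar\rho^\eta)$ (bounded in $L^2_tH^1_x$ with $\partial_t\bar\rho^\eta$ bounded in $L^2_tH^{-1}_x$) rather than directly to $\bar\rho^\eta$; the lower bound $\bar\rho^\eta\geq r$ and monotonicity of $\phi$ then let you recover strong convergence of $\bar\rho^\eta$ itself.
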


\begin{remark}
Except for the initial data $\rho_0$, which can be random, equation \eqref{equation/hydrodynamic limit equation 1} is purely deterministic.
Furthermore, one immediately see that for $\rho_0\in(0,\infty)$ a (possibly random) constant, the solution is simply the constant $\Bar{\rho}(x,t,\omega)\equiv\rho_0(\omega)$.
\end{remark}

We now consider the stochastic equation
\begin{equation}\label{equation/OU equation 2}
    \partial_tv=\Delta\left(a\, v\right)-\nabla\cdot\left(b\,v+c\,\dot{\xi}\right)\,\,\,\text{in } \mathbb{T}^d\times(0,T),\quad v(\cdot,0)=0\,\,\,\text{in } \mathbb{T}^d\times\{0\},
\end{equation}
for some $\F_0$-measurable random variables $a,b\in L^0(\Omega;\R)$ with $a\geq r>0$ a.e. for some $r\in(0,\infty)$, and $c\in L^2(\Omega;\R)$, and for the noise $\xi=\lim_{\epsilon\to0}\xi^\epsilon$ specified in Assumption \ref{assumption/assumption N2}.
First of all, we make precise our notion of solution, here $\alpha\geq0$ depends on the regularity of the noise $\xi$, which satisfies Assumption \ref{assumption/assumption N2}.

\begin{definition}
\label{definition/ OU equation weak solution}
A weak solution in $H^{-\alpha}(\T^d)$ of equation \eqref{equation/OU equation 2} is a predictable process $v\in L^2(\Omega\times(0,T);H^{-\alpha}(\T^d))$ such that, for every $t\in[0,T]$ and every $w\in H^{\alpha+2}(\T^d)$, $\p$-almost surely we have
\begin{equation}
    \langle v(s),w(s)\rangle\,\Big|_{s=0}^{s=t}=\int_0^t\langle v(r),a\,\Delta w(r)+ b\nabla w(r)\rangle\, dr +  \int_0^t\int_{\T^d}c\,\nabla w(x,r)\, \de\xi.
\end{equation}
\end{definition}

We have the following well-posedness result.
This can be proved for example via semigroup methods (see e.g. \cite[Chapter~5]{da_prato_zabczyk_1992}) or Fourier analysis (see e.g. \cite{dirr-fehrman-gess-conservative-stochastic-pde-and-fluctuations-of-the-symmetric}).

\begin{theorem}
\label{theorem/ well-posedness of OU equation}
Let $\xi$ satisfies Assumption \ref{assumption/assumption N2}.
There exists a unique weak solution $v\in L^2(\Omega\times(0,T);H^{-\alpha}(\T^d))$ of equation \eqref{equation/OU equation 2} for any $\alpha>\frac{d}{2}$. 
Furthermore, if $a_n\to a$ and $b_n\to b$ almost surely, with $a_n\geq r>0$ for every $n\in\N$, and $c_n\to c$ in $L^2(\Omega;\R)$, as $n\to\infty$, and we denote by $v_n$ and $v$ the corresponding solutions of \eqref{equation/OU equation 2}, then we have $v_n\to v$ in $L^2(\Omega\times(0,T);H^{-\alpha}(\T^d))$ for any $\alpha>\frac{d}{2}$.
\end{theorem}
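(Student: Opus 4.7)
The plan is to exploit the linearity of \eqref{equation/OU equation 2} and the spatial constancy of $a,b,c$ by passing to Fourier series, reducing the SPDE to an independent family of scalar Ornstein--Uhlenbeck-type SDEs indexed by $k\in\Z^d$. Since $a,b,c$ are $\F_0$-measurable, I will work conditionally on $\F_0$, treating them as deterministic scalars pathwise and reintegrating at the end. Writing $v(x,t)=\sum_{k\in\Z^d}\hat v_k(t)\,e^{i2\pi k\cdot x}$ and $f_m=\sum_k \hat f_{m,k}\,e^{i2\pi k\cdot x}$, mode $k$ satisfies the scalar SDE
\[
d\hat v_k(t)= -\lambda_k\,\hat v_k(t)\,dt - i2\pi c\sum_{m\in\N}(k\cdot \hat f_{m,k})\,dB^m_t,\qquad \hat v_k(0)=0,
\]
with complex eigenvalue $\lambda_k=a(2\pi)^2|k|^2+i2\pi(k\cdot b)$, whose real part is $a(2\pi)^2|k|^2\geq 0$. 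The zero mode vanishes identically, and each nonzero mode admits the unique strong solution
\[
\hat v_k(t)= -i2\pi c\sum_{m\in\N}(k\cdot \hat f_{m,k})\int_0^t e^{-\lambda_k(t-s)}\,dB^m_s.
\]

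For existence, I will apply It\^o isometry conditionally on $\F_0$; the $|k|^2$ coming from $k\cdot \hat f_{m,k}$ cancels against the time integral of $|e^{-\lambda_k(t-s)}|^2$, giving
\[
\E\big[\,|\hat v_k(t)|^2\,\big|\,\F_0\big]\leq \frac{|c|^2}{2a}\sum_{m\in\N}|\hat f_{m,k}|^2.
\]
Applying Assumption \ref{assumption/assumption N2} to $u(x)=e^{-i2\pi k\cdot x}$ bounds the inner sum by a constant $C>0$ uniformly in $k$, so using $a\geq r>0$ and $c\in L^2(\Omega)$,
\[
\E\!\int_0^T\!\|v(t)\|_{H^{-\alpha}(\T^d)}^2\,dt = \sum_{k\in\Z^d}(1+|k|^2)^{-\alpha}\!\int_0^T\!\E|\hat v_k(t)|^2\,dt \;\leq\; \tfrac{CT}{2r}\,\E[|c|^2]\!\!\sum_{k\in\Z^d}(1+|k|^2)^{-\alpha},
\]
and the last series converges exactly when $\alpha>d/2$. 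Checking that $v=\sum_k \hat v_k\,e^{i2\pi k\cdot x}$ solves \eqref{equation/OU equation 2} in the sense of Definition \ref{definition/ OU equation weak solution} is then a straightforward integration by parts against $w\in H^{\alpha+2}(\T^d)$, using density of trigonometric polynomials. For uniqueness, testing any weak solution against $w(x)=e^{-i2\pi k\cdot x}\in H^{\alpha+2}(\T^d)$ forces $\hat v_k$ to satisfy the scalar SDE above, whose strong solution is unique.

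For the stability assertion, I will subtract the explicit representations for $v_n$ and $v$ and split the difference into two pieces: one proportional to $c_n-c$, multiplied by the same stochastic convolution that gave the uniform bound above; and one proportional to $c$, multiplied by the differences of exponentials $e^{-\lambda_{n,k}(t-s)}-e^{-\lambda_k(t-s)}$, where $\lambda_{n,k}=a_n(2\pi)^2|k|^2+i2\pi(k\cdot b_n)$. The first piece contributes $O(\E|c_n-c|^2)$ to $\E\int_0^T\|v_n-v\|_{H^{-\alpha}}^2\,dt$ and vanishes since $c_n\to c$ in $L^2(\Omega)$. The second piece is handled mode by mode: almost sure convergence $\lambda_{n,k}\to\lambda_k$ with uniform lower bound $\Re\lambda_{n,k}\geq r(2\pi)^2|k|^2$ gives termwise convergence to zero by dominated convergence inside the It\^o isometry, and a further application of dominated convergence in the sum over $k$, with the convergent majorant $(1+|k|^2)^{-\alpha}$, concludes. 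The only delicate point is combining the almost-sure convergence of $a_n,b_n$ with the $L^2$-convergence of $c_n$, which I resolve by conditioning on $\F_0$ inside the outer expectation before dominated convergence is applied.
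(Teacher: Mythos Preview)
Your approach is correct and matches what the paper does: the paper does not give its own proof of this theorem, but simply remarks that it ``can be proved for example via semigroup methods \ldots\ or Fourier analysis,'' citing \cite{da_prato_zabczyk_1992} and \cite{dirr-fehrman-gess-conservative-stochastic-pde-and-fluctuations-of-the-symmetric}. Your mode-by-mode Fourier argument is exactly the second of these, and your treatment of the stability statement (splitting into the $c_n-c$ piece and the $e^{-\lambda_{n,k}}-e^{-\lambda_k}$ piece, then conditioning on $\F_0$ and applying dominated convergence) is the natural way to carry it out.

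One small notational point: since the $f_m$ are scalar-valued and the Brownian motions $B^m$ are $\R^d$-valued, the noise term in the $k$th mode should read $-i2\pi c\sum_m \hat f_{m,k}\,(k\cdot dB^m_t)$ rather than $-i2\pi c\sum_m (k\cdot \hat f_{m,k})\,dB^m_t$; the It\^o isometry computation you wrote is nonetheless correct once this is adjusted, since the quadratic variation picks up $|\hat f_{m,k}|^2|k|^2$ either way.
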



\subsection{The generalized Dean--Kawasaki equation}
\label{section/the generalized dean--kawasaki equation}

In this section we summarize the solution theory to equation \eqref{equation/generalized dean--kawasaki equation 1}, interpreted in its It\^o formulation \eqref{equation/generalized dean--kawasaki equation 3}, as put forward in \cite{fehrman-gess-Well-posedness-of-the-Dean-Kawasaki-and-the-nonlinear-Dawson-Watanabe-equation-with-correlated-noise}.
In particular, we introduce the notions of weak solution and of stochastic kinetic solution and we collect some of their properties.

The notion of weak solution is classical.

\begin{definition}
\label{definition/weak solution to generalized dean kawasaki}
Let $\rho_0\in L^1(\Omega;L^1(\T^d))$ be nonnegative and $\F_0$-measurable.
A weak solution to \eqref{equation/generalized dean--kawasaki equation 3} with initial data $\rho_0$ is an $\F_t$-adapted nonnegative continuous $L^1(\T^d)$-valued process $\rho\in L^1(\Omega\times[0,T];L^1(\T^d))$ such that almost surely
\begin{equation}
    \rho,\phi^{\nicefrac{1}{2}}(\rho),\sigma(\rho)\in L^2([0,T];H^1(\T^d)),\quad \dot{\sigma}(\rho)\in L^2([0,T];L^2(\T^d))\quad\text{and}\quad \nu(\rho)\in L^1([0,T];L^1(\T^d)),
\end{equation}
and such that almost surely, for every $\psi\in C^\infty(\T^d)$ and every $t\in[0,T]$,
\begin{align} \label{equation/definition of weak solution to dean-kawasaki}
\begin{split}
    \int_{\T^d}\rho(x,t)\psi(x)\,dx-\int_{\T^d}\rho_0(x)\,\psi(x)\,dx=&
    \int_0^t\int_{\T^d}\Delta\psi(x)\,\phi(\rho)(x,s)\,dx\,ds
    \\
    &+
    \int_0^t\int_{\T^d}\nabla\psi(x)\,\nu(\rho)(x,s)\,dx\,ds
    \\
    &+\sqrt{\epsilon}\int_0^t\int_{\T^d}\nabla\psi(x)\,\sigma(\rho)(x,s)\,d\xi^\epsilon
    \\
    &-\frac{\epsilon}{2}\!\int_0^t\!\int_{\T^d}\!\!\nabla\psi(x)\!\left(F_1^\epsilon \Dot{\sigma}(\rho)\nabla\sigma(\rho)+\dot{\sigma}(\rho)\sigma(\rho)F_2^\epsilon\right)\,dx\,ds.
\end{split}
\end{align}
\end{definition}

When the coefficients of the equation are sufficiently smooth, weak solutions exist and are unique.

\begin{theorem}[Theorem 5.20 in \cite{fehrman-gess-Well-posedness-of-the-Dean-Kawasaki-and-the-nonlinear-Dawson-Watanabe-equation-with-correlated-noise}]
\label{theorem/existence and uniqueness of weak solution to dean_kawasaki}
Let $\rho_0$ satisfy Assumption \ref{assumption/assumption I1}(i), let $\xi^\epsilon$ satisfy Assumption \ref{assumption/assumption N1} and let $\phi,\nu$ and $\sigma$ satisfy Assumption \ref{assumption/assumption C1}.
Suppose furthermore that $\phi,\sigma$ satisfy the additional assumptions:
\begin{itemize}
    \item[(i)] $\dot{\phi}(z)\geq c>0$ for all $z\in(0,\infty)$, for some $c\in(0,\infty)$;
    \item[(ii)] $\sigma\in C([0,\infty))\cap C^{\infty}((0,\infty))$ with $\dot{\sigma}\in C_c^{\infty}([0,\infty))$.
\end{itemize}
There exists a unique weak solution to equation \eqref{equation/generalized dean--kawasaki equation 3} in the sense of Definition \ref{definition/weak solution to generalized dean kawasaki}.
\end{theorem}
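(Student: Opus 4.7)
Since Assumption~(i) removes the degeneracy of the diffusion and Assumption~(ii) makes $\sigma$ bounded and Lipschitz (indeed smooth, with $\sigma$ eventually constant), the equation falls within reach of classical nonlinear SPDE machinery. My plan is therefore a three-step program: construct a solution via a Galerkin approximation combined with Skorokhod compactness, and prove uniqueness by an $L^2$ energy estimate via It\^o's formula, with the non-degeneracy producing the dissipation that dominates the noise terms.

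For existence, I would project \eqref{equation/generalized dean--kawasaki equation 3} onto the span $V_N$ of the first $N$ Fourier modes on $\T^d$, obtaining a finite-dimensional SDE with locally Lipschitz coefficients whose solution $\rho^N$ exists globally by the a priori bounds below. The key a priori estimate comes from applying It\^o's formula to $\|\rho^N\|_{L^p(\T^d)}^p$: the diffusive term produces
\begin{equation*}
-c_p\int_0^t\!\!\int_{\T^d} (\rho^N)^{p-2}\,\dot{\phi}(\rho^N)\,|\nabla\rho^N|^2\,dx\,ds \;=\; -c_p\int_0^t\!\!\int_{\T^d} |\nabla\Theta_{\phi,p}(\rho^N)|^2\,dx\,ds,
\end{equation*}
which is precisely why the entropy function $\Theta_{\phi,p}$ of \eqref{equation/thetha phi p} was introduced; the noise integral vanishes in expectation, and the It\^o correction together with the convective term $\nabla\cdot\nu(\rho^N)$ are absorbed into the dissipation using the growth bounds \eqref{aa_5050} and \eqref{aa_121212} of Assumption~\ref{assumption/assumption C1}. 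The analogous $p=2$ computation provides the $L^2([0,T];H^1)$ bound on $\phi^{\nicefrac{1}{2}}(\rho^N)$ and, via Assumption~\ref{assumption/assumption C1}(vii), the bounds on $\sigma(\rho^N)$ and $\dot\sigma(\rho^N)$ required by Definition~\ref{definition/weak solution to generalized dean kawasaki}. Combining these with a uniform time-regularity bound in a negative Sobolev norm obtained directly from the equation, the Aubin--Lions lemma yields tightness of the laws of $\rho^N$ on $L^2([0,T];L^2(\T^d))\cap C([0,T];H^{-\alpha}(\T^d))$; Skorokhod's representation theorem then provides an a.s.-convergent subsequence on an auxiliary space, and the strong $L^2$ convergence together with the Lipschitz/growth properties of $\phi,\nu,\sigma$ and the boundedness of $\dot\sigma$ let me pass to the limit in every term of \eqref{equation/definition of weak solution to dean-kawasaki}, producing a weak solution with the required integrability.

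For uniqueness, let $\rho_1,\rho_2$ be two weak solutions with the same initial data. I would apply It\^o's formula to $\|\rho_1(t)-\rho_2(t)\|_{L^2(\T^d)}^2$. Writing $\phi(\rho_1)-\phi(\rho_2)=\tilde a\,(\rho_1-\rho_2)$ with $\tilde a\geq c>0$ by Assumption~(i), the diffusive contribution is bounded above by $-2c\,\|\nabla(\rho_1-\rho_2)\|_{L^2}^2$; the Lipschitz property of $\nu$, the boundedness and Lipschitz property of $\sigma$ guaranteed by Assumption~(ii), and the identities for $F_1^\epsilon,F_2^\epsilon$ in Assumption~\ref{assumption/assumption N1} make every other contribution (including the It\^o correction and the quadratic variation) bounded by $C\,\|\rho_1-\rho_2\|_{L^2}^2$ plus a fraction of $\|\nabla(\rho_1-\rho_2)\|_{L^2}^2$ absorbable into the dissipation. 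Grönwall's inequality then forces $\rho_1=\rho_2$.

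The hardest step will be the passage to the limit in the It\^o correction $\tfrac{\epsilon}{2}\nabla\!\cdot\!\bigl(F_1^\epsilon[\dot{\sigma}(\rho^N)]^2\nabla\rho^N+\dot{\sigma}(\rho^N)\sigma(\rho^N)F_2^\epsilon\bigr)$, which couples the weak convergence of $\nabla\rho^N$ (only $L^2$-bounded through $\nabla\Theta_{\phi,p}(\rho^N)$) with the nonlinear factor $[\dot\sigma(\rho^N)]^2$; here I would crucially use that $\dot\sigma$ has compact support so that $[\dot\sigma(\rho^N)]^2\nabla\rho^N=\nabla\Psi(\rho^N)$ for a Lipschitz primitive $\Psi$, reducing the term to an integration against a distribution with an a.s.-convergent factor. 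The compact support of $\dot\sigma$ plays the same role in uniqueness, where it prevents the noise terms from growing with $\|\rho_i\|_{L^\infty}$ and permits the pointwise bounds needed in Grönwall's argument.
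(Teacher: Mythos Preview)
The paper does not prove this theorem: it is quoted verbatim as Theorem~5.20 of \cite{fehrman-gess-Well-posedness-of-the-Dean-Kawasaki-and-the-nonlinear-Dawson-Watanabe-equation-with-correlated-noise} and used as a black box. So there is no in-paper proof to compare your proposal against; what I can do is assess whether your outline would actually close.

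Two genuine gaps stand out. First, the spectral Galerkin projection on $\T^d$ does not preserve nonnegativity, yet the coefficients $\phi,\nu,\sigma$ are only defined (and only satisfy Assumption~\ref{assumption/assumption C1}) on $[0,\infty)$; in particular the structural condition $\sigma(0)=0$, which is what keeps the dynamics nonnegative, is meaningless once $\rho^N$ takes negative values. You would need either to extend all coefficients to $\R$ in a way compatible with the a~priori estimates and then prove that the limit is nonnegative, or to replace the Galerkin step by an approximation that respects the sign (e.g.\ a vanishing-viscosity regularisation of the full PDE, which is closer to what \cite{fehrman-gess-Well-posedness-of-the-Dean-Kawasaki-and-the-nonlinear-Dawson-Watanabe-equation-with-correlated-noise} actually does). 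As written, the finite-dimensional SDE is not even well posed.

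Second, your uniqueness argument assumes global Lipschitz continuity of $\nu$ and boundedness of $\dot\phi$, but Assumption~\ref{assumption/assumption C1} only gives $\phi,\nu\in C^{1,1}_{\mathrm{loc}}((0,\infty))$ together with the one-sided bound $\dot\phi\geq c>0$; neither $\dot\phi$ nor $\dot\nu$ need be bounded above. The $L^2$ energy estimate therefore does not close by a direct Gr\"onwall: the cross terms involving $\nu(\rho_1)-\nu(\rho_2)$ and $\phi(\rho_1)-\phi(\rho_2)$ cannot be controlled by $\|\rho_1-\rho_2\|_{L^2}^2$ without an a~priori $L^\infty$ bound or a localisation via stopping times, neither of which you supply. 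In \cite{fehrman-gess-Well-posedness-of-the-Dean-Kawasaki-and-the-nonlinear-Dawson-Watanabe-equation-with-correlated-noise} this difficulty is bypassed by proving an $L^1$-contraction at the kinetic level (their Theorem~4.7) and then observing, via their Proposition~5.21 (Proposition~\ref{proposition/weak solution implies kinetic solution to dean-kawasaki} here), that weak solutions are kinetic solutions; uniqueness of weak solutions is thus inherited rather than proved by an $L^2$ argument. Your route could be salvaged, but it needs substantially more work than you indicate. Finally, note that Skorokhod's theorem only yields a probabilistically weak solution; you should invoke a Yamada--Watanabe argument, using your pathwise uniqueness, to obtain a solution on the original filtered space.
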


Despite being quite natural, the notion of weak solution is often doomed to fail when $\sigma$ is not smooth enough.
Indeed, as discussed in Section \ref{section/comments on the literature}, already in the prototypical Dean--Kawasaki case $\sigma(z)=\sqrt{z}$, it is not clear how to interpret some of the terms featuring in \eqref{equation/definition of weak solution to dean-kawasaki}.
To handle rougher coefficients a wider solution theory is needed.

Namely, given a nonnegative function $\rho:\T^d\times[0,T]\to\R$, one introduces an additional variable $z\in\R$ and consider the \emph{kinetic function} $\chi:\R\times\T^d\times[0,T]\to\{0,1\}$ of $\rho$ defined by
\begin{equation}\eqref{equation/derivation of the kinetic formulation 4}
    \chi(z,x,t)=\Bar{\chi}(z,\rho(x,t))=\mathbf{1}_{\{0<z<\rho(x,t)\}},
\end{equation}
for $\Bar{\chi}(z,v)\coloneqq\mathbf{1}_{\{0<z<v\}}-\mathbf{1}_{\{v<z<0\}}$.
The \emph{velocity} variable $z$ essentially corresponds to the magnitude of the solution $\rho$.
A formal computation presented in \cite{fehrman-gess-Well-posedness-of-the-Dean-Kawasaki-and-the-nonlinear-Dawson-Watanabe-equation-with-correlated-noise}, based on the fundamental identity
\begin{equation}\eqref{equation/derivation of the kinetic formulation 5}
    S(\rho(x,t))=\int_\R\dot{S}(z)\chi(z,x,t)\,dz,
\end{equation}
for every smooth function $S:\R\to\R$ with $S(0)=0$, motivates the notion of stochastic kinetic solution.

\begin{definition}\label{definition/stochastic kinetic solution}  Let $\rho_0\in L^1(\Omega;L^1(\Omega))$ be nonnegative and $\F_0$-measurable.  A \emph{stochastic kinetic solution} of \eqref{equation/generalized dean--kawasaki equation 3} is a nonnegative, a.s. continuous $L^1(\T^d)$-valued $\F_t$-predictable function $\rho\in L^1(\Omega\times[0,T];L^1(\T^d))$ that satisfies the following three properties.
\begin{enumerate}
\item[(i)]\emph{Preservation of mass}:  a.s.\ for every $t\in[0,T]$,
\begin{equation}\label{def_3535}\|\rho(\cdot,t)\|_{L^1(\T^d)}=\|\rho_0\|_{L^1(\T^d)}.\end{equation}
\item [(ii)]\emph{Integrability of the flux}:  we have that
\[\sigma(\rho)\in L^2(\Omega;L^2(\T^d\times[0,T]))\;\;\textcolor{black}{\textrm{and}\;\;\nu(\rho)\in L^1(\Omega;L^1(\T^d\times[0,T];\R^d)).}\]
\item [(iii)]\emph{Local regularity}:  for every $K\in\N$,
\begin{equation}\label{def_2500000} 
[(\rho\wedge K)\vee\nicefrac{1}{K}]\in L^2(\Omega;L^2([0,T];H^1(\T^d))).
\end{equation}
\end{enumerate}
Furthermore, there exists a \emph{kinetic measure} $q$, that is a map $q$ from $\Omega$ to the space of nonnegative, locally finite measures on $\T^d\times(0,\infty)\times[0,T]$ such that, for every $\psi\in\C^\infty(\T^d\times(0,\infty))$, the process
\[\Omega\times[0,T]\ni(\omega,t)\mapsto \int_0^t\int_\R\int_{\T^d}\psi(x,\xi)\,\de q(\omega)\;\;\textrm{is $\F_t$-predictable,}\]
that satisfies the following three properties.
\begin{enumerate}
\item[(iv)] \emph{Regularity}: almost surely as nonnegative measures,
\begin{equation}\label{2_500}
\delta_0(z-\rho)\dot{\phi}(z)|\nabla\rho|^2\leq q\;\;\textrm{on}\;\;\T^d\times(0,\infty)\times[0,T].
\end{equation}
\item[(v)] \emph{Vanishing at infinity}:  we have that
\begin{equation}\label{equation/kinetic measure vanishes at infinity}
\lim_{M\rightarrow\infty}\E\left[q(\T^d\times[M,M+1]\times[0,T])\right]=0.
\end{equation}
\item [(vi)]\emph{The equation}: almost surely, for every $\psi\in \C^\infty_c(\T^d\times(0,\infty))$ and every $t\in[0,T]$,
%
\begin{align}\label{equation/kinetic formulation of dean-kawasaki}
& \int_\R\int_{\T^d}\chi(x,z,t)\psi(x,z) = \int_\R\int_{\T^d}\overline{\chi}(\rho_0)\psi(x,z)-\int_0^t\int_{\T^d}\dot{\phi}(\rho)\nabla\rho\cdot(\nabla\psi)(x,\rho)
\\ \nonumber &  -\frac{\epsilon}{2}\int_0^t\int_{\T^d}F_1^\epsilon[\dot{\sigma}(\rho)]^2\nabla\rho\cdot(\nabla\psi)(x,\rho)-\frac{\epsilon}{2}\int_0^t\int_{\T^d}\sigma(\rho)\dot{\sigma}(\rho)F_2^\epsilon\cdot(\nabla\psi)(x,\rho)
\\ \nonumber & -\int_0^t\int_\R\int_{\T^d}\partial_z\psi(x,z)\,\de q+\frac{\epsilon}{2}\int_0^t\int_{\T^d}\left(\sigma(\rho)\dot{\sigma}(\rho)\nabla\rho\cdot F_2^\epsilon+F_3^\epsilon\,\sigma^2(\rho)\right)(\partial_z\psi)(x,\rho)
\\ \nonumber &  -\int_0^t\int_{\T^d}\psi(x,\rho)\nabla\cdot\nu(\rho)\de t
- \sqrt{\epsilon}\int_0^t\int_{\T^d}\psi(x,\rho)\nabla\cdot\left(\sigma(\rho)\,\de\xi^\epsilon\right).
\end{align}
\end{enumerate}
\end{definition}

\begin{remark}\label{remark/derivatives in kinetic formulation and interpretation of stochastic integral}
\begin{itemize}
    \item[(i)]
    We write $(\nabla\psi)(x,\rho(x,t))=\left.\nabla\psi(x,z)\right|_{z=\rho(x,t)}$
    to mean the gradient of $\nabla_x\psi(x,z)$ evaluated at the point $(x,\rho(x,t))$ as opposed to the full gradient of the composition $\psi(x,\rho(x,t))$.
    \item[(ii)] For $\F_t$-adapted processes $g_t\in L^2(\Omega\times[0,T];L^2(\T^d))$ and $h_t\in L^2(\Omega\times[0,T];H^1(\T^d))$ and for $t\in[0,T]$, we write
    \begin{align*}
    \int_0^t\int_{\T^d}g_s\nabla\cdot\left(h_s\,\de\xi^\epsilon\right)=\sum_{k=1}^\infty\left(\int_0^t\int_{\T^d}g_sf_k^\epsilon\nabla h_s\cdot \de B^k_s+\int_0^t\int_{\T^d}g_sh_s\nabla f_k^\epsilon\cdot \de B^k_s\right),
    \end{align*}
    where the integrals are interpreted in the It\^o sense.
\end{itemize}
\end{remark}

The upshot of the kinetic formulation of \eqref{equation/generalized dean--kawasaki equation 3} is of course that stochastic kinetic solutions exist and are unique for a much wider class of coefficients $\phi$ and $\sigma$.

\begin{theorem}[Theorem~5.29 in \cite{fehrman-gess-Well-posedness-of-the-Dean-Kawasaki-and-the-nonlinear-Dawson-Watanabe-equation-with-correlated-noise}]
\label{theorem/existence and uniquess of kinetic solutions to dean-kawasaki}
Let $\rho_0$ satisfy Assumption \ref{assumption/assumption I1}(i), let $\xi^\epsilon$ satisfy Assumption \ref{assumption/assumption N1}, and let $\phi$, $\nu$ and $\sigma$ satisfy Assumption \ref{assumption/assumption C1}.
There exists a unique stochastic kinetic solution $\rho$ to \eqref{equation/generalized dean--kawasaki equation 3} in the sense of Definition \ref{definition/stochastic kinetic solution}.
Furthermore, for $p\geq2$ and $m\geq1$ given in Assumption \ref{assumption/assumption C1}, for $n=2$ and $n=p$, it satisfies
    \begin{align}\label{equation/estimate 1 for kinetic solutions}
        \sup_{t\in[0,T]}&\E\left[\int_{\T^d}|\rho(x,t)|^n\,dx\right]+\inf_z\dot{\phi}(z)\,\E\left[\int_0^T\int_{\T^d}|\rho|^{n-2}|\nabla\rho|^2\,dx\,dt\right]
        \\
        &\leq C\,(1+\epsilon\,\|F_3^\epsilon\|_{\infty})^{\frac{d}{2}(p+m)}\left(1+\E\left[\|\rho_0\|_{L^1(\T^d)}^{m+n-1}+\|\rho_0\|_{L^n(\T^d)}^n\right]\right),
\end{align}
for a constant $C=C(T,\phi,\nu,\sigma)$ depending on $\phi$, $\nu$ and $\sigma$ only through the constants $c$ appearing in Assumption \ref{assumption/assumption C1}.
\end{theorem}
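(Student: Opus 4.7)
The plan is to construct stochastic kinetic solutions through a vanishing viscosity regularization that reduces to Theorem \ref{theorem/existence and uniqueness of weak solution to dean_kawasaki}, to establish the a priori bound \eqref{equation/estimate 1 for kinetic solutions} uniformly along the regularization by testing the kinetic formulation against powers of the velocity variable, and finally to prove uniqueness in this class via a doubling-of-variables argument. I would structure the proof in three stages.

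\textbf{Regularization and passage to the limit.} First I would construct smooth approximations $(\phi^\eta,\nu^\eta,\sigma^\eta)$ of the nonlinearities with $\dot\phi^\eta\geq c_\eta>0$ and $\dot\sigma^\eta$ compactly supported in $[0,\infty)$, designed so that Assumption \ref{assumption/assumption C1} is preserved with uniform constants, and a truncated initial datum $\rho_0^\eta=(\rho_0\wedge\eta^{-1})\vee\eta$. Keeping $\epsilon$ and $\xi^\epsilon$ fixed, Theorem \ref{theorem/existence and uniqueness of weak solution to dean_kawasaki} produces a unique weak solution $\rho^\eta$ in the sense of Definition \ref{definition/weak solution to generalized dean kawasaki}. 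A short computation using the identity \eqref{equation/derivation of the kinetic formulation 5} shows that any such regular solution is automatically a stochastic kinetic solution with kinetic measure $q^\eta = \delta_0(z-\rho^\eta)\dot\phi^\eta(z)|\nabla\rho^\eta|^2$. I would then derive the estimate \eqref{equation/estimate 1 for kinetic solutions} for $\rho^\eta$ uniformly in $\eta$ by applying It\^o's formula to $\int_{\T^d}(\rho^\eta)^n/n\,dx$: the diffusion contributes $\inf\dot\phi^\eta \int |\rho^\eta|^{n-2}|\nabla\rho^\eta|^2$, the martingale term vanishes in expectation, and the It\^o correction involves precisely the terms $F_1^\epsilon[\dot\sigma(\rho^\eta)]^2|\nabla\rho^\eta|^2$, $\sigma(\rho^\eta)\dot\sigma(\rho^\eta)F_2^\epsilon\cdot\nabla\rho^\eta$ and $F_3^\epsilon\sigma^2(\rho^\eta)$. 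Using Assumption \ref{assumption/assumption C1}(vii) to compare the quadratic terms with $\Theta_{\phi,n}^2$, a Gagliardo--Nirenberg interpolation absorbs the bad part of $\epsilon\|F_3^\epsilon\|_\infty\int\sigma^2(\rho^\eta)$ into the diffusion at the price of the factor $(1+\epsilon\|F_3^\epsilon\|_\infty)^{d(p+m)/2}$, after which Gr\"onwall closes the estimate.

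\textbf{Compactness and identification of the limit.} With \eqref{equation/estimate 1 for kinetic solutions} and the local regularity \eqref{def_2500000} holding uniformly in $\eta$, I would extract along a subsequence a weak limit $\rho$ of $\rho^\eta$, a weak-$\star$ limit $q$ of $q^\eta$, and identify $\rho$ as a stochastic kinetic solution. The key point is that weak convergence of the kinetic functions $\chi^\eta\rightharpoonup\chi$ in $L^\infty$-weak-$\star$ is upgraded to strong $L^1$ convergence of $\rho^\eta$ via the velocity-averaging lemmas of Perthame--Tadmor applied between any two cutoffs $[1/K,K]$, exactly as permitted by \eqref{def_2500000}. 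Strong $L^1$ convergence plus Assumption \ref{assumption/assumption C1}(iv)--(viii) guarantees convergence of the nonlinear and noise terms in \eqref{equation/kinetic formulation of dean-kawasaki}, while lower semicontinuity of the defect measure under weak-$\star$ limits preserves \eqref{2_500}, and the uniform tail bound from the highest moment estimate preserves \eqref{equation/kinetic measure vanishes at infinity}. The stochastic integral passes to the limit thanks to the It\^o isometry and Assumption \ref{assumption/assumption N1} for $\xi^\epsilon$.

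\textbf{Uniqueness via doubling of variables.} Given two stochastic kinetic solutions $\rho_1,\rho_2$ with kinetic functions $\chi_i$ and measures $q_i$, I would test the equation for $\chi_1(z,x,t)$ against a regularization of $(1-\chi_2(z,x,t))$ (and symmetrically), exploiting the identity $|\rho_1-\rho_2| = \int_\R |\chi_1-\chi_2|^2\,dz = \int_\R \chi_1(1-\chi_2)+\chi_2(1-\chi_1)\,dz$ to obtain a Gr\"onwall inequality for $\E\|\rho_1(\cdot,t)-\rho_2(\cdot,t)\|_{L^1}$. The parabolic defect terms coming from $q_1$ and $q_2$ cancel the quadratic $\dot\phi(\rho)|\nabla\rho|^2$ contributions along the diagonal $z=\rho_i$, thanks exactly to the regularity \eqref{2_500}; the cross noise term produces, via the Stratonovich-to-It\^o conversion, a quadratic variation that is balanced by the $F_1^\epsilon[\dot\sigma]^2$ It\^o-correction term on the diagonal. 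The main obstacle, and the reason the kinetic formulation is formulated on test functions compactly supported in $(0,\infty)\times\T^d$, is controlling the residual terms supported near $z=0$ where $\sigma,\dot\sigma$ and $\dot\phi^{-1}$ are singular: I would first perform the doubling against test functions supported in $\{z,z'\in[\delta,\delta^{-1}]\}$, handle the commutator errors between the regularization and the nonlinearities via Assumption \ref{assumption/assumption C1}(iv)--(viii), and then send the cutoff $\delta\downarrow 0$, using Assumption \ref{assumption/assumption I1}(i) and the moment bound \eqref{equation/estimate 1 for kinetic solutions} to show that the boundary terms at $z=0$ and $z=\infty$ vanish. This gives $\rho_1=\rho_2$ almost surely and completes the proof.
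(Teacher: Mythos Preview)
The paper does not give its own proof of this statement: Theorem~\ref{theorem/existence and uniquess of kinetic solutions to dean-kawasaki} is simply imported verbatim from \cite{fehrman-gess-Well-posedness-of-the-Dean-Kawasaki-and-the-nonlinear-Dawson-Watanabe-equation-with-correlated-noise} (their Theorem~5.29), with no argument supplied here. So there is nothing in the present paper to compare your proposal against at the level of details.

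That said, your three-stage outline---regularize to land in the setting of Theorem~\ref{theorem/existence and uniqueness of weak solution to dean_kawasaki}, derive the $L^n$-estimate \eqref{equation/estimate 1 for kinetic solutions} uniformly in the regularization via It\^o's formula and Gagliardo--Nirenberg, pass to the limit using kinetic averaging for compactness, and prove uniqueness by doubling variables with cutoffs in the velocity variable---matches the architecture of the proof in \cite{fehrman-gess-Well-posedness-of-the-Dean-Kawasaki-and-the-nonlinear-Dawson-Watanabe-equation-with-correlated-noise} as summarized in Section~\ref{section/comments on the literature} of this paper and as reflected in the auxiliary results quoted here (Proposition~\ref{proposition/weak solution implies kinetic solution to dean-kawasaki}, Proposition~\ref{proposition/kinetic solutions of dean-kawasaki depend continuously on the coefficients}). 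Your identification of the singular boundary at $z=0$ as the main obstacle in the uniqueness step, and the use of the renormalization built into Definition~\ref{definition/stochastic kinetic solution} to handle it, is exactly the point emphasized in \cite{fehrman-gess-Well-posedness-of-the-Dean-Kawasaki-and-the-nonlinear-Dawson-Watanabe-equation-with-correlated-noise}. For the purposes of this paper, however, the correct ``proof'' is simply the citation.
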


Weak and stochastic kinetic solutions are obviously related.
First of all, when they exist, weak solutions are stochastic kinetic solutions.

\begin{proposition}[Proposition~5.21 in \cite{fehrman-gess-Well-posedness-of-the-Dean-Kawasaki-and-the-nonlinear-Dawson-Watanabe-equation-with-correlated-noise}]
\label{proposition/weak solution implies kinetic solution to dean-kawasaki}
Let $\rho_0$ satisfy Assumption \ref{assumption/assumption I1}(i), let $\xi^\epsilon$ satisfy Assumption \ref{assumption/assumption N1} and let $\phi$, $\nu$, $\sigma$ satisfy Assumption \ref{assumption/assumption C1} and the additional assumptions (i)-(ii) in Theorem \ref{theorem/existence and uniqueness of weak solution to dean_kawasaki}.
Let $\rho$ be the unique weak solution of \eqref{equation/generalized dean--kawasaki equation 3} in the sense of Definition \ref{definition/weak solution to generalized dean kawasaki}, then $\rho$ is also a stochastic kinetic solution in the sense of Definition \ref{definition/stochastic kinetic solution} and its kinetic measure is given by
$q=\delta_0(z-\rho)\dot{\phi}(z)|\nabla\rho|^2dx\,dt$.
\end{proposition}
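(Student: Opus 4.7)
The plan is to verify that the weak solution $\rho$ and the candidate measure
$q:=\delta_0(z-\rho)\dot\phi(z)|\nabla\rho|^2\,dx\,dt$
meet each of the six conditions in Definition~\ref{definition/stochastic kinetic solution}. Conditions~(i)--(iii) are essentially built into Definition~\ref{definition/weak solution to generalized dean kawasaki}: choosing $\psi\equiv 1$ in \eqref{equation/definition of weak solution to dean-kawasaki} and using the identities of Assumption~\ref{assumption/assumption N1} yields the preservation of mass; integrability of $\sigma(\rho)$ and $\nu(\rho)$ is built into the weak solution definition and Assumption~\ref{assumption/assumption C1}; and the local regularity~(iii) follows from $\rho\in L^2([0,T];H^1(\T^d))$ combined with the standing hypothesis $\dot\phi\ge c>0$, which makes $\phi$ bi-Lipschitz on each compact subinterval of $(0,\infty)$ and so transfers $H^1$-regularity from $\phi(\rho)$ to the truncations $(\rho\wedge K)\vee 1/K$. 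For the candidate kinetic measure, predictability follows from the adaptedness and local $H^1$-regularity of $\rho$, property~(iv) is an equality by construction, and the vanishing at infinity~(v) reduces to showing $\E\int_0^T\!\int_{\T^d}|\nabla\phi^{\nicefrac{1}{2}}(\rho)|^2\mathbf{1}_{\{M\le\rho\le M+1\}}\,dx\,dt\to 0$, which is immediate by dominated convergence since $\phi^{\nicefrac{1}{2}}(\rho)\in L^2([0,T];H^1(\T^d))$.

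The substantive step is the kinetic equation~(vi). I would obtain it by applying an infinite-dimensional It\^o formula to the functionals $t\mapsto\int_{\T^d}S(\rho(x,t))\,\eta(x)\,dx$ for smooth $S:\R\to\R$ with $S(0)=0$ and $\eta\in C_c^\infty(\T^d)$, substituting the It\^o form~\eqref{equation/generalized dean--kawasaki equation 3} of the equation. Integration by parts on the deterministic terms generates, via the chain rule on $\dot S(\rho)$, both the non-renormalized flux terms and the quadratic $\ddot S(\rho)\dot\phi(\rho)|\nabla\rho|^2$ contribution that will feed the kinetic measure. The quadratic variation of the martingale part contributes $\frac{\epsilon}{2}\ddot S(\rho)\sum_k|\nabla(\sigma(\rho)f_k^\epsilon)|^2$, which, after using the expansion $\sum_k|\nabla(\sigma(\rho)f_k^\epsilon)|^2=F_1^\epsilon|\nabla\sigma(\rho)|^2+2\sigma(\rho)\nabla\sigma(\rho)\cdot F_2^\epsilon+\sigma^2(\rho)F_3^\epsilon$, exactly cancels the $F_1^\epsilon$-piece of the It\^o correction and leaves the $F_2^\epsilon$ and $F_3^\epsilon$ terms appearing in~\eqref{equation/kinetic formulation of dean-kawasaki}. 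Finally, inserting the kinetic identity $S(\rho)=\int_\R\dot S(z)\chi(z,\cdot)\,dz$ on the left-hand side and recognizing each remaining right-hand side term as the pairing of $\chi$ or $q$ against a test function of the product form $\psi(x,z)=\ddot S(z)\eta(x)$ produces \eqref{equation/kinetic formulation of dean-kawasaki} for all such products. A density argument, using that linear combinations of $\ddot S(z)\eta(x)$ are dense in $C_c^\infty(\T^d\times(0,\infty))$, then yields the equation for arbitrary test functions.

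The main obstacle is the rigorous justification of the It\^o formula for the nonlinear functional $\int S(\rho)\eta\,dx$, since $\rho$ is not pointwise an It\^o process but only a weak solution to an SPDE. I would handle this by mollifying in space, setting $\rho^\delta:=\rho*\zeta^\delta$, applying the standard finite-dimensional It\^o formula to $\int_{\T^d}S(\rho^\delta(x,t))\eta(x)\,dx$, and then passing to the limit $\delta\to 0$. This step requires the strong convergence $\rho^\delta\to\rho$ in $L^2([0,T];H^1(\T^d))$---which is available under the hypothesis $\dot\phi\ge c>0$---together with commutator estimates of DiPerna--Lions type for the mollifications of the nonlinear compositions $\phi(\rho^\delta)$, $\sigma(\rho^\delta)$ and $\nu(\rho^\delta)$; the smoothness of $\dot\phi$, $\sigma$, $\dot\sigma$ and $\nu$ away from zero supplied by the additional hypotheses of Theorem~\ref{theorem/existence and uniqueness of weak solution to dean_kawasaki} is precisely what makes these commutator terms vanish in the limit.
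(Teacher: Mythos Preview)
The paper does not give its own proof of this proposition; it is quoted verbatim from \cite{fehrman-gess-Well-posedness-of-the-Dean-Kawasaki-and-the-nonlinear-Dawson-Watanabe-equation-with-correlated-noise}. Your outline is the standard argument and is essentially correct: verify the structural conditions (i)--(v) directly from the weak-solution regularity, then obtain the kinetic equation~(vi) by applying It\^o's formula to $\int_{\T^d}S(\rho)\eta\,dx$, invoking the identity $S(\rho)=\int\dot S(z)\chi(z,\cdot)\,dz$, and using the density of products $\ddot S(z)\eta(x)$ in $C_c^\infty(\T^d\times(0,\infty))$.

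Two minor remarks. First, for the vanishing-at-infinity property~(v), dominated convergence needs an $L^1(\Omega)$-bound on $\int_0^T\!\int_{\T^d}\dot\phi(\rho)|\nabla\rho|^2$, not merely the almost-sure bound $\phi^{1/2}(\rho)\in L^2_tH^1_x$ from Definition~\ref{definition/weak solution to generalized dean kawasaki}; this follows from the energy estimate \eqref{equation/estimate 1 for kinetic solutions} with $n=2$ combined with the nondegeneracy $\dot\phi\ge c>0$. Second, for the It\^o step, your mollification-plus-commutator route is valid, but note that elsewhere in this paper (e.g.\ the proof of Proposition~\ref{proposition/lp estimates}) the authors instead invoke Krylov's It\^o formula for SPDEs \cite{krylov_ito_formula} directly; under the standing regularity $\rho,\phi^{1/2}(\rho),\sigma(\rho)\in L^2([0,T];H^1(\T^d))$ and $\dot\sigma(\rho)\in L^2_{t,x}$ this applies without separate DiPerna--Lions commutator estimates and is somewhat shorter.
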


The following statement asserts that stochastic kinetic solutions, and thus also weak solutions thanks to the previous proposition, depend continuously on the coefficients of the equation, and it will be crucial in our arguments.
The proof is a straightforward adaptation of the final part of the proof of \cite[Theorem~5.29]{fehrman-gess-Well-posedness-of-the-Dean-Kawasaki-and-the-nonlinear-Dawson-Watanabe-equation-with-correlated-noise}.
\begin{proposition}
\label{proposition/kinetic solutions of dean-kawasaki depend continuously on the coefficients}
Let $\rho_0$ satisfy Assumption \ref{assumption/assumption I1}(i), let $\xi^\epsilon$ satisfy Assumption \ref{assumption/assumption N1} and let $\phi,\nu,\sigma$ satisfy Assumption \ref{assumption/assumption C1}. 
For $\eta\in(0,1)$, let $\phi^\eta,\nu^\eta,\sigma^\eta$ be a sequence of coefficients such that:
\begin{itemize}
    \item [(i)]$\phi^\eta,\nu^\eta,\sigma^\eta$ satisfy Assumption \ref{assumption/assumption C1} uniformly in $\eta\in(0,1)$;
    \item [(ii)]$\phi^\eta\to\phi$, $\nu^\eta\to\nu$ and $\sigma^\eta\to\sigma$ in $C_{\text{loc}}([0,\infty))\cap C^1_{\text{loc}}((0,\infty))$ as $\eta\to0$.
\end{itemize}
For each $\eta\in(0,1)$, let $\rho^{\epsilon,\eta}$ be the unique stochastic kinetic solution of
\begin{align}[left ={\empheqlbrace}]\label{equation/dean--kawasaki with smoothed coefficients 2}
\de\rho^{\epsilon,\eta}=&\Delta\phi^\eta(\rho^{\epsilon,\eta})\de t-\nabla\cdot\nu^{\eta}(\rho^{\epsilon,\eta})\,\de t-\sqrt{\epsilon}\,\nabla\!\cdot\!\left(\sigma^\eta(\rho^{\epsilon,\eta}) \,\de\xi^{\epsilon}\right)
\\
&+\frac{\epsilon}{2}\nabla\cdot\left(F_1^\epsilon \left(\Dot{\sigma}^\eta(\rho^{\epsilon,\eta})\right)^2\nabla\rho^{\epsilon,\eta}+\dot{\sigma}^\eta(\rho^{\epsilon,\eta})\sigma^\eta(\rho^{\epsilon,\eta})F_2^\epsilon\right)\de t,
\\
\rho^{\epsilon,\eta}(\cdot,0)&=\rho_0,
\end{align}
and let $\rho^\epsilon$ be the stochastic kinetic solution of \eqref{equation/generalized dean--kawasaki equation 3} with initial data $\rho_0$, in the sense of Definition \ref{definition/stochastic kinetic solution}.
Then we have
\begin{equation}
    \label{equation/kinetic solutions of dean-kawasaki depend continuously on the coefficients}
    \lim_{\eta\to0}\left\|\rho^{\epsilon,\eta}-\rho^\epsilon\right\|_{L^1([0,T];L^1(\T^d))}=0\quad\text{in probability.}
\end{equation}
\end{proposition}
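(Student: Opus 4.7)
The plan is to follow the stability argument that concludes the proof of \cite[Theorem~5.29]{fehrman-gess-Well-posedness-of-the-Dean-Kawasaki-and-the-nonlinear-Dawson-Watanabe-equation-with-correlated-noise}. Since stochastic kinetic solutions of \eqref{equation/generalized dean--kawasaki equation 3} are unique by Theorem \ref{theorem/existence and uniquess of kinetic solutions to dean-kawasaki}, convergence in probability will follow from a Gy\"ongy--Krylov-type argument: it suffices to show that along any pair of subsequences $(\rho^{\epsilon,\eta_k}, \rho^{\epsilon,\eta_\ell})$ there is a further joint subsequence whose law converges, and then to identify every such joint limit as being concentrated on the diagonal using the uniqueness statement.

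The first step is to derive uniform-in-$\eta$ a priori estimates. Because $\phi^\eta, \nu^\eta, \sigma^\eta$ satisfy Assumption \ref{assumption/assumption C1} uniformly, Theorem \ref{theorem/existence and uniquess of kinetic solutions to dean-kawasaki} applied to each $\rho^{\epsilon,\eta}$ produces constants in \eqref{equation/estimate 1 for kinetic solutions} that are independent of $\eta$. This yields uniform $L^p_{\omega,x,t}$ bounds on $\rho^{\epsilon,\eta}$, uniform $L^2_{\omega,t}H^1_x$ bounds on the truncations $[(\rho^{\epsilon,\eta}\wedge K)\vee K^{-1}]$ for each $K\in\N$, and uniform bounds on the kinetic measures $q^\eta$ on compact subsets of velocities. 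Combining these spatial regularities with time-increment estimates extracted from the kinetic formulation \eqref{equation/kinetic formulation of dean-kawasaki}, and invoking an Aubin--Lions/Kolmogorov-type compactness argument, gives tightness of the joint laws on $L^1([0,T];L^1(\T^d))^2$ together with tightness of the kinetic measures on the space of nonnegative locally finite measures on $\T^d\times(0,\infty)\times[0,T]$.

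A Skorokhod-type representation (Jakubowski's version, to accommodate the non-metric weak-$\ast$ topology on measures) then produces a new probability space on which $\rho^{\epsilon,\eta_k} \to \tilde\rho_1$ and $\rho^{\epsilon,\eta_\ell} \to \tilde\rho_2$ almost surely in $L^1([0,T];L^1(\T^d))$, together with joint convergence of the kinetic measures and the noise. Hypothesis (ii) furnishes local uniform convergence $\phi^\eta \to \phi$, $\nu^\eta \to \nu$, $\sigma^\eta \to \sigma$ in $C_{\text{loc}}([0,\infty)) \cap C^1_{\text{loc}}((0,\infty))$, which allows passage to the limit in every term of \eqref{equation/kinetic formulation of dean-kawasaki}: test functions $\psi\in C_c^\infty(\T^d\times(0,\infty))$ are supported away from the irregularity at zero, where $\sigma^\eta$, $\dot\sigma^\eta$, $\dot\phi^\eta$ and $\nu^\eta$ converge uniformly; the stochastic integral converges once $\sigma^\eta(\rho^{\epsilon,\eta_k}) \to \sigma(\tilde\rho_1)$ in $L^2(\Omega\times\T^d\times[0,T])$, which follows from $L^1$ convergence combined with the uniform $L^p$ moment estimates via Vitali's theorem. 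This identifies both $\tilde\rho_1$ and $\tilde\rho_2$ as stochastic kinetic solutions of \eqref{equation/generalized dean--kawasaki equation 3} with the same data $\rho_0$ and noise $\xi^\epsilon$; pathwise uniqueness from Theorem \ref{theorem/existence and uniquess of kinetic solutions to dean-kawasaki} forces $\tilde\rho_1 = \tilde\rho_2 = \rho^\epsilon$ almost surely, so the joint limit sits on the diagonal and the Gy\"ongy--Krylov criterion yields \eqref{equation/kinetic solutions of dean-kawasaki depend continuously on the coefficients}.

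The main obstacle will be the simultaneous passage to the limit in the kinetic measure and the parabolic defect term $\delta_0(z-\rho^{\epsilon,\eta})\dot\phi^\eta(z)|\nabla\rho^{\epsilon,\eta}|^2$: preserving the inequality \eqref{2_500} under the weak-$\ast$ limit requires a lower semicontinuity argument that exploits the uniform gradient bound on the truncations, and the identification of the It\^o correction terms involving $\dot\sigma^\eta(\rho^{\epsilon,\eta})\nabla\rho^{\epsilon,\eta}$ requires that the weak convergence of truncated gradients is compatible with the uniform convergence of $\dot\sigma^\eta$ on compacts away from zero. These are precisely the technical points dealt with in the closing pages of \cite[Theorem~5.29]{fehrman-gess-Well-posedness-of-the-Dean-Kawasaki-and-the-nonlinear-Dawson-Watanabe-equation-with-correlated-noise}; since the $\eta$-dependence enters only through coefficients that converge in $C^1_{\text{loc}}((0,\infty))$, their argument transfers essentially verbatim, and the vanishing-at-infinity condition \eqref{equation/kinetic measure vanishes at infinity} for $\tilde q$ follows from the uniform moment control established in Step one.
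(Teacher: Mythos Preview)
Your proposal is correct and matches the paper's own approach: the paper does not give a self-contained proof but simply states that the result is a straightforward adaptation of the final part of the proof of \cite[Theorem~5.29]{fehrman-gess-Well-posedness-of-the-Dean-Kawasaki-and-the-nonlinear-Dawson-Watanabe-equation-with-correlated-noise}, which is precisely the compactness/Skorokhod/Gy\"ongy--Krylov scheme you have outlined.
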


\medskip

For the sake of completeness, we end this section by establishing that, if the stochastic convective term $\sigma$ is more regular near zero and somewhat dominated by the diffusion, we have the converse of Proposition \ref{proposition/weak solution implies kinetic solution to dean-kawasaki}: stochastic kinetic solutions are weak solutions.
Since we will not need this fact in the following, we only sketch the main details.
The proof is based on the following proposition, which establishes the $H^1$-regularity of the nonlinear term $\phi^{\nicefrac{1}{2}}(\rho)$ imposed by the diffusion.

\begin{proposition}[Corollary 5.31 in \cite{fehrman-gess-Well-posedness-of-the-Dean-Kawasaki-and-the-nonlinear-Dawson-Watanabe-equation-with-correlated-noise}]
\label{proposition/ dissipation estimate}
Let  $\rho_0$ satisfy Assumption \ref{assumption/assumption I1}, let $\xi^\epsilon$ satisfy Assumption \ref{assumption/assumption N1} and let $\phi,\nu$ and $\sigma$ satisfy Assumption \ref{assumption/assumption C1}.
Let $\rho$ be the unique stochastic kinetic solution to equation \eqref{equation/generalized dean--kawasaki equation 3}.
Assume furthermore the following:
\begin{itemize}
\item[(i)] there exists $c\in(0,\infty)$ such that $|\sigma(z)| \leq c\,\phi^{\nicefrac{1}{2}}(z)$ for every $z\in [0,\infty)$;
\item[(ii)] there exists $c\in(0,\infty)$ such that
\begin{equation}
    |\nu(z)|+\dot{\phi}(z)\leq c\,\left(1+z+\phi(z)\right)\,\,\text{for every}\,\,z\in[0,\infty);
\end{equation}
\item [(iii)] the map $z\mapsto\log(\phi(z))$ is locally integrable on $[0,\infty)$.
\end{itemize}
Then for the unique function $\Psi_\phi\in C([0,\infty);\R)\cap C^1((0,\infty);\R)$ satisfying $\Psi_\phi(0)=0$ and $\dot{\Psi}_\Phi(z)=\log(\phi(z))$, for a constant $C=C(T,\phi,\nu,\sigma)$ depending on $\phi$, $\nu$ and $\sigma$ only through the constants $c$ appearing in Assumption \ref{assumption/assumption C1} and in the further hypotheses (i)-(iii) above, we have
\begin{align}\label{equation/entropy dissipation estimate}
\begin{split}
\E\bigg[&\sup_{t\in[0,T]}\int_{\T^d}\Psi_\phi(\rho(x,t))\bigg]+\E\left[\int_0^T\int_{\T^d}\left|\nabla\phi^{\nicefrac{1}{2}}(\rho)\right|^2\right]+\inf_{z>0}\dot{\phi}(z)\,\E\left[\int_0^T\int_{\T^d}\mathbf{1}_{\{\rho>0\}}\,\frac{\phi'(\rho)}{\phi(\rho)}\left|\nabla\rho\right|^2\right]
\\ 
& \leq C \,\bigg(1+\|\epsilon\,F_1^\epsilon\|_{\infty}+\|\epsilon\,F_3^\epsilon\|_{\infty}^{\frac{d}{2}(m+1)}\bigg)\left(1+\E\left[\|\rho_0\|^m_{L^1(\T^d)}+\int_{\T^d}\Psi_\phi(\rho_0)\right]\right).
\end{split}
\end{align}
\end{proposition}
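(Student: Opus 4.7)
The plan is to apply the kinetic formulation of Definition \ref{definition/stochastic kinetic solution} with the test function $\psi(x,z)=\dot\Psi_\phi(z)=\log\phi(z)$, which is precisely the primitive identified to produce $\int_{\T^d}\Psi_\phi(\rho)\,dx$ on the left-hand side via the fundamental identity of kinetic theory. Since $\psi$ is not in $C_c^\infty(\T^d\times(0,\infty))$, I would first use the truncated test functions $\psi_n(z)=\log\phi(z)\,\eta_n(z)$, where $\eta_n$ is a smooth cutoff equal to one on $[1/n,n]$ and compactly supported in $(0,\infty)$, apply Definition \ref{definition/stochastic kinetic solution}(vi), and pass $n\to\infty$ at the end. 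The justification of this limit is the central technical obstacle: near $z=0$ one uses assumption (iii) (local integrability of $\log\phi$) together with the preservation of mass and Fatou's lemma, while as $z\to\infty$ one uses the polynomial growth of $\Psi_\phi$ (following from Assumption~\ref{assumption/assumption C1}(ii)), the moment bound \eqref{equation/estimate 1 for kinetic solutions}, and the vanishing at infinity \eqref{equation/kinetic measure vanishes at infinity} of the kinetic measure.

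Because the test function is independent of $x$, the three gradient-in-$x$ terms of \eqref{equation/kinetic formulation of dean-kawasaki} drop out. The remaining analysis rests on the kinetic-measure term: using $\partial_z\psi=\phi'(z)/\phi(z)$ and the lower bound \eqref{2_500}, one gets the key inequality
\begin{equation*}
-\int_0^t\!\!\int_\R\!\!\int_{\T^d}\partial_z\psi\,dq \;\leq\; -\int_0^t\!\!\int_{\T^d}\frac{(\phi'(\rho))^2}{\phi(\rho)}|\nabla\rho|^2 \;=\; -4\int_0^t\!\!\int_{\T^d}\bigl|\nabla\phi^{\nicefrac12}(\rho)\bigr|^2,
\end{equation*}
which is the dissipation. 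The same quantity dominates $\inf_{z>0}\dot\phi(z)\int\mathbf{1}_{\{\rho>0\}}\frac{\phi'(\rho)}{\phi(\rho)}|\nabla\rho|^2$ since $(\phi')^2/\phi=\dot\phi\cdot(\phi'/\phi)\geq\inf\dot\phi\cdot(\phi'/\phi)$ on $\{\rho>0\}$, so a standard splitting into two halves recovers both terms appearing on the left of \eqref{equation/entropy dissipation estimate}.

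The remaining right-hand side contributions are then controlled by the extra assumptions. The It\^o-correction terms in the kinetic formulation evaluate to $\frac{\epsilon}{2}\int (F_3^\epsilon\sigma^2(\rho)+F_2^\epsilon\sigma(\rho)\dot\sigma(\rho)\cdot\nabla\rho)\phi'(\rho)/\phi(\rho)$; using hypothesis (i) which gives $\sigma^2(\rho)\phi'(\rho)/\phi(\rho)\le c^2\phi'(\rho)\le c'(1+\rho+\phi(\rho))$ via hypothesis (ii), the first piece is absorbed into a lower-order $L^1$-moment of $\rho$ controlled by \eqref{equation/estimate 1 for kinetic solutions}, while the gradient piece is absorbed into the dissipation via Young's inequality, producing the prefactors $\|\epsilon F_1^\epsilon\|_\infty$ and $\|\epsilon F_3^\epsilon\|_\infty$. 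The deterministic convective contribution $-\int\log\phi(\rho)\nabla\cdot\nu(\rho)$ is rewritten after integration by parts (justified on the truncated level by the local $H^1$-regularity \eqref{def_2500000}) as $\int(\phi'(\rho)/\phi(\rho))\nabla\rho\cdot\nu(\rho)$, then controlled through hypothesis (ii) and Young's inequality by $\delta\int(\phi'(\rho))^2/\phi(\rho)|\nabla\rho|^2+C_\delta\int(1+\rho^2+\phi(\rho))$, with the first half absorbed into the dissipation and the second handled by Theorem \ref{theorem/existence and uniquess of kinetic solutions to dean-kawasaki}.

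Finally the stochastic term $-\sqrt{\epsilon}\int_0^t\!\int\log\phi(\rho)\nabla\cdot(\sigma(\rho)d\xi^\epsilon)$ is a true martingale on the truncated level, and after applying Burkholder--Davis--Gundy its $\sup_{t\in[0,T]}$ moment is bounded by
\begin{equation*}
C\,\E\Bigl[\epsilon\int_0^T\!\!\int_{\T^d}\!\!F_1^\epsilon(\log\phi(\rho))^2|\nabla\sigma(\rho)|^2 + F_3^\epsilon(\log\phi(\rho))^2\sigma^2(\rho)\Bigr]^{\nicefrac12},
\end{equation*}
which is controlled using once more $|\sigma|\le c\phi^{\nicefrac12}$, Young's inequality to absorb a fraction into the dissipation, and the $L^p$-moment bound \eqref{equation/estimate 1 for kinetic solutions}. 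Taking expectation of $\sup_{t\in[0,T]}$, collecting the prefactors, and invoking the dominated/monotone convergence in $n$ completes the derivation of \eqref{equation/entropy dissipation estimate}; the constant depends on $\phi,\nu,\sigma$ only through the quantities $c$ in Assumption \ref{assumption/assumption C1} and in hypotheses (i)--(iii), as required.
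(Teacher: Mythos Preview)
The paper does not prove this statement; it is quoted directly as Corollary~5.31 of \cite{fehrman-gess-Well-posedness-of-the-Dean-Kawasaki-and-the-nonlinear-Dawson-Watanabe-equation-with-correlated-noise}, so there is no in-paper argument to compare against. Your outline---test the kinetic formulation with $\psi(z)=\log\phi(z)$, extract the dissipation from the kinetic-measure term via \eqref{2_500}, and close with BDG and the moment bound \eqref{equation/estimate 1 for kinetic solutions}---is the correct strategy and matches how such entropy estimates are obtained in that reference.

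Two points need correction. First, your Young-inequality treatment of the deterministic convection is both unnecessary and, as written, does not close: the remainder it produces is $C_\delta\int|\nu(\rho)|^2/\phi(\rho)$, and hypothesis~(ii) only gives $|\nu|\leq c(1+z+\phi)$, so near $\{\rho=0\}$ the quotient $|\nu|^2/\phi$ is not controlled. The right observation is that this term, and likewise the $F_2^\epsilon$ contribution, vanishes identically on the torus by the device used in \eqref{proposition/lp estimates/ proof 3}--\eqref{proposition/lp estimates/proof 4}: since $\psi$ is independent of $x$, the integrand $\frac{\phi'(\rho)}{\phi(\rho)}\eta_n(\rho)\,\nu(\rho)\cdot\nabla\rho$ is a total divergence and integrates to zero, while the $F_2^\epsilon$ piece vanishes after one more integration by parts using $\nabla\cdot F_2^\epsilon=0$. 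Second, the passage $n\to\infty$ near $z=0$ is more delicate than an appeal to Fatou: the cutoff generates boundary terms of the form $\int\eta_n'(z)\log\phi(z)\,dq$, whose vanishing along a subsequence requires the decay $\liminf_{\delta\to0}\delta^{-1}q(\T^d\times[\delta/2,\delta]\times[0,T])=0$ from \cite[Proposition~4.6]{fehrman-gess-Well-posedness-of-the-Dean-Kawasaki-and-the-nonlinear-Dawson-Watanabe-equation-with-correlated-noise}, exactly as in the paper's sketch of Proposition~\ref{proposition/kinetic solutions implies weak solution to deak-kawasaki}. With these two fixes your argument goes through.
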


We can now prove the following.

\begin{proposition}
\label{proposition/kinetic solutions implies weak solution to deak-kawasaki}
In the setting of Proposition \ref{proposition/ dissipation estimate}, assume furthermore that, for some $c>0$,
\begin{equation}\label{equation/sigma' leq (phi^1/2)'} 
    |\dot{\sigma}(z)|\leq c\,\dot{\left(\phi^{\nicefrac{1}{2}}\right)}(z)\quad\text{and}\quad|\dot{\sigma}(z)|^2\leq c\,\left(1+z+\phi(z)\right)\quad\forall\,z\in(0,\infty).
\end{equation}
Then the stochastic kinetic solution $\rho$ of \eqref{equation/generalized dean--kawasaki equation 3} satisfies the weak formulation \eqref{equation/definition of weak solution to dean-kawasaki} of the equation.
\end{proposition}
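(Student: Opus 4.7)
The strategy I would adopt is to approximate the coefficients $\phi,\nu,\sigma$ by smoother ones for which Theorem \ref{theorem/existence and uniqueness of weak solution to dean_kawasaki} directly yields a weak solution, and then pass the weak formulation to the limit exploiting the continuous dependence of kinetic solutions on the coefficients stated in Proposition \ref{proposition/kinetic solutions of dean-kawasaki depend continuously on the coefficients}. This reroutes all the technical work away from the singular, renormalized kinetic formulation and into a limiting argument within the more tractable framework of Definition \ref{definition/weak solution to generalized dean kawasaki}, whose test functions do not involve the velocity variable $z$ and whose only awkward terms (the It\^o corrections) are exactly the ones tamed by the additional hypotheses \eqref{equation/sigma' leq (phi^1/2)'}.

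Concretely, for $\eta\in(0,1)$ I would construct $\phi^\eta,\nu^\eta,\sigma^\eta$ satisfying Assumption \ref{assumption/assumption C1} uniformly in $\eta$, preserving the hypotheses (i)--(iii) of Proposition \ref{proposition/ dissipation estimate} and the bounds \eqref{equation/sigma' leq (phi^1/2)'} with constants independent of $\eta$, and additionally meeting $\dot{\phi}^\eta\geq c_\eta>0$ and $\dot{\sigma}^\eta\in C_c^\infty([0,\infty))$. A concrete choice is $\phi^\eta(z)=\phi(z)+\eta z$, with $\sigma^\eta$ obtained by mollifying $\sigma$ and truncating its derivative near infinity, and $\nu^\eta$ analogously regularized. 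The corresponding weak solutions $\rho^{\epsilon,\eta}$ are provided by Theorem \ref{theorem/existence and uniqueness of weak solution to dean_kawasaki}; by Proposition \ref{proposition/weak solution implies kinetic solution to dean-kawasaki} they coincide with the stochastic kinetic solutions of the smoothed equations, hence Proposition \ref{proposition/kinetic solutions of dean-kawasaki depend continuously on the coefficients} ensures $\rho^{\epsilon,\eta}\to\rho^\epsilon$ in $L^1([0,T]\times\T^d)$ in probability as $\eta\to 0$.

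The remaining task is to pass to the limit $\eta\to 0$ in \eqref{equation/definition of weak solution to dean-kawasaki} written for $\rho^{\epsilon,\eta}$. For the diffusion and drift terms, the polynomial bounds in Assumption \ref{assumption/assumption C1}(ii), (viii) combined with the uniform moment estimates of Theorem \ref{theorem/existence and uniquess of kinetic solutions to dean-kawasaki}, whose constants depend on $\phi,\nu,\sigma$ only through Assumption \ref{assumption/assumption C1}, provide uniform integrability; Vitali's theorem then yields $\phi^\eta(\rho^{\epsilon,\eta})\to\phi(\rho^\epsilon)$ and $\nu^\eta(\rho^{\epsilon,\eta})\to\nu(\rho^\epsilon)$ in $L^1(\Omega\times[0,T]\times\T^d)$. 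For the stochastic integral, the pointwise bound $|\sigma(z)|\leq c\,\phi^{\nicefrac{1}{2}}(z)$ and the uniform $L^2$-control of $(\phi^\eta)^{\nicefrac{1}{2}}(\rho^{\epsilon,\eta})$ coming from the dissipation estimate \eqref{equation/entropy dissipation estimate} deliver convergence of $\sigma^\eta(\rho^{\epsilon,\eta})$ in $L^2(\Omega\times[0,T]\times\T^d)$, which is enough to pass the stochastic integral to the limit via It\^o's isometry.

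The main obstacle lies in the two It\^o correction terms, and this is where the additional hypotheses \eqref{equation/sigma' leq (phi^1/2)'} play their essential role. The inequality $|\dot{\sigma}(z)|\leq c\,\dot{(\phi^{\nicefrac{1}{2}})}(z)$ implies the pointwise bound $|\nabla\sigma^\eta(\rho^{\epsilon,\eta})|\leq c\,|\nabla (\phi^\eta)^{\nicefrac{1}{2}}(\rho^{\epsilon,\eta})|$ and hence, through the dissipation estimate, a uniform $L^2$-bound on $\nabla\sigma^\eta(\rho^{\epsilon,\eta})$; the complementary bound $|\dot{\sigma}(z)|^2\leq c(1+z+\phi(z))$ together with the moment estimates produces uniform $L^2$-integrability of $\dot{\sigma}^\eta(\rho^{\epsilon,\eta})$ and of $\sigma^\eta(\rho^{\epsilon,\eta})\dot{\sigma}^\eta(\rho^{\epsilon,\eta})$. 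Combining the strong $L^2$-convergence of these pointwise quantities (via Vitali applied to the convergence in measure coming from $\rho^{\epsilon,\eta}\to\rho^\epsilon$ in $L^1$) with the weak $L^2$-convergence of $\nabla\sigma^\eta(\rho^{\epsilon,\eta})$ then suffices to identify the limits of the products $F_1^\epsilon\,\dot{\sigma}^\eta(\rho^{\epsilon,\eta})\nabla\sigma^\eta(\rho^{\epsilon,\eta})$ and $\dot{\sigma}^\eta(\rho^{\epsilon,\eta})\sigma^\eta(\rho^{\epsilon,\eta})F_2^\epsilon$ with the corresponding terms for $\rho^\epsilon$, closing the argument.
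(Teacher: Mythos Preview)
Your approach is sound but genuinely different from the paper's. The paper does not approximate the coefficients at all: instead it works directly with the kinetic formulation \eqref{equation/kinetic formulation of dean-kawasaki}, testing against $\psi(x,z)=\varphi(x)K_\delta(z)$ where $K_\delta$ is a smooth cutoff in the velocity variable supported in $[\delta,2/\delta]$, and then sends $\delta\to 0$. All terms except the one involving the kinetic measure $q$ are handled by dominated convergence using the regularity estimates \eqref{equation/estimate 1 for kinetic solutions}, \eqref{equation/entropy dissipation estimate} and the hypothesis \eqref{equation/sigma' leq (phi^1/2)'}; the kinetic measure term vanishes along a subsequence thanks to \eqref{equation/kinetic measure vanishes at infinity} at infinity and the quantitative bound $\liminf_{\delta\to 0}\delta^{-1}q(\T^d\times[\delta/2,\delta]\times[0,T])=0$ near zero, imported from \cite[Proposition~4.6]{fehrman-gess-Well-posedness-of-the-Dean-Kawasaki-and-the-nonlinear-Dawson-Watanabe-equation-with-correlated-noise}.

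The trade-off is this. The paper's route is conceptually cleaner---it shows directly that removing the renormalization in the velocity variable recovers the weak formulation---but it leans on a nontrivial property of the kinetic measure near the zero set of $\rho$. Your route sidesteps the kinetic measure entirely, since weak solutions of the smoothed equations have explicit defect measures by Proposition~\ref{proposition/weak solution implies kinetic solution to dean-kawasaki}, and instead pushes the difficulty into (a) constructing $\phi^\eta,\nu^\eta,\sigma^\eta$ so that \emph{all} the structural inequalities---those of Proposition~\ref{proposition/ dissipation estimate} and both parts of \eqref{equation/sigma' leq (phi^1/2)'}---hold with constants independent of $\eta$ (your choice $\phi^\eta=\phi+\eta z$ does not automatically preserve $|\dot\sigma^\eta|\leq c\,\dot{((\phi^\eta)^{1/2})}$, so the construction needs more care than you indicate), and (b) a weak--strong pairing argument to identify the limit of the product $\dot\sigma^\eta(\rho^{\epsilon,\eta})\nabla\sigma^\eta(\rho^{\epsilon,\eta})$. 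Both are manageable, but neither is entirely free; the paper's argument is shorter because it treats the single solution $\rho$ rather than a family.
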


\begin{proof}
The idea is to test the kinetic formulation \eqref{equation/kinetic formulation of dean-kawasaki} of the equation against functions of the form $\psi(x,z)=\varphi(x)K_{\delta}(z)$ for arbitrary $\varphi\in C^{\infty}(\T^d)$ and for a smooth cut-off $K_\delta:\R\to[0,1]$ such that
\begin{equation}
    K_\delta(z)=\empheqlbrace
    \begin{aligned}
        &1\quad\text{if $z\in\left[2\delta,\nicefrac{1}{\delta}\right]$},\\
        &0\quad\text{if $z\in[0,\delta]\cup\left[\nicefrac{2}{\delta},\infty\right)$},
    \end{aligned}
\end{equation}
and then let $\delta\to0$ and show that, along a suitable subsequence $\delta_n\to0$, the resulting expression coincides with the weak formulation \eqref{equation/definition of weak solution to dean-kawasaki}.

Except for the term involving the kinetic measure, each of the integrals in \eqref{equation/kinetic formulation of dean-kawasaki} is shown to converge to the corresponding quantity in \eqref{equation/definition of weak solution to dean-kawasaki} or to zero by dominated convergence.
This is justified using Assumption \ref{assumption/assumption C1}, the regularity estimates \eqref{equation/estimate 1 for kinetic solutions} and \eqref{equation/entropy dissipation estimate}, the further hypotheses (i)-(iii) in Proposition \ref{proposition/ dissipation estimate}, and the hypothesis \eqref{equation/sigma' leq (phi^1/2)'}, which together with \eqref{equation/entropy dissipation estimate} implies that $\dot{\sigma}(\rho)\in L^2([0,T];L^2(\T^d))$ and $\sigma(\rho)\in L^2([0,T];H^1(\T^d))$ with
\begin{equation}
    \left|\nabla\sigma(\rho)\right|\leq C  \left|\nabla\phi^{\nicefrac{1}{2}}(\rho)\right|.
\end{equation}
The term involving the kinetic measure is shown to converge to zero along a subsequence $\delta_n\to0$ using property \eqref{equation/kinetic measure vanishes at infinity} and the following property, which is proved in \cite[Proposition 4.6]{fehrman-gess-Well-posedness-of-the-Dean-Kawasaki-and-the-nonlinear-Dawson-Watanabe-equation-with-correlated-noise},
\begin{equation}
    \liminf_{\delta\rightarrow 0}\left(\delta^{-1}q(\T^d\times [\nicefrac{\delta}{2},\delta]\times[0,T])\right)= 0.
\end{equation}
\end{proof}

\section{The central limit theorem for fluctuations}
\label{section/The central limit theorem for fluctuations}

\subsection{The central limit theorem in $L^2(\Omega)$ for more regular coefficients}

This section is devoted to the proof of our central limit theorems: Theorem \ref{theorem/clt for fluctuations 1} and \ref{theorem/clt for fluctuations 3}.
In the first part we obtain the stronger version \eqref{equation/rate of convergence 1} of the CLT, for coefficients satisfying Assumption \ref{assumption/assumption C1} and the stronger Assumption \ref{assumption/assumption C2}.
In the second part we extend the result to rougher coefficients satisfying Assumption \ref{assumption/assumption C1} and Assumption \ref{assumption/assumption C2 weak} only.

The nonlinearity of the diffusion $\phi$ makes it difficult to apply standard Fourier analysis and we have to accompany this with some moment estimates.
Our ansatz is that $v^\epsilon$ converge to $v$, solving \eqref{equation/OU equation 1}, which can be distribution valued.
Therefore we expect the norm of $v^\epsilon$ in any function space to blow up as $\epsilon\to0$.
After an approximation lemma, the following proposition quantifies this explosion.

\begin{lemma}\label{lemma/smooth approximations phi^eta sigma^eta}
Let $\phi,\nu,\sigma\in W^{1,1}_{\text{loc}}([0,\infty))\cap C^1((0,\infty))$ with $\phi(0)=\sigma(0)=0$ and $\dot{\phi}(z)>0$ for every $z\in(0,\infty)$.
There exists a sequence of approximations $(\phi^\eta,\nu^\eta,\sigma^\eta)_{\eta\in(0,1)}$ such that
\begin{itemize}
    \item [(i)] $\phi^\eta\to\phi$, $\nu^\eta\to\nu$ and $\sigma^\eta\to\sigma$ in $C^1_{\text{loc}}(0,\infty)\cap C_{\text{loc}}([0,\infty))$ as $\eta\to0$;
    \item[(ii)] $\phi^\eta,\nu^\eta,\sigma^\eta$ satisfy Assumption \ref{assumption/assumption C1}, \ref{assumption/assumption C2 weak}, \ref{assumption/assumption C2} and Assumption (i)-(ii) in Theorem \ref{theorem/existence and uniqueness of weak solution to dean_kawasaki}.
\end{itemize}
Furthermore, if $\phi,\,\nu$ and $\sigma$ satisfy Assumption \ref{assumption/assumption C1}, \ref{assumption/assumption C2 weak}, \ref{assumption/assumption C2} or Assumption (i) or (ii) in Theorem \ref{theorem/existence and uniqueness of weak solution to dean_kawasaki}, then $\phi^\eta,\nu^\eta, \sigma^\eta$ satisfy Assumption \ref{assumption/assumption C1}, \ref{assumption/assumption C2 weak}, \ref{assumption/assumption C2} or Assumption (i) or (ii) in Theorem \ref{theorem/existence and uniqueness of weak solution to dean_kawasaki}, respectively, uniformly in $\eta\in(0,1)$.
\end{lemma}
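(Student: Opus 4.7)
The plan is to construct $(\phi^\eta,\nu^\eta,\sigma^\eta)$ by a three-step procedure: mollify, truncate at infinity, and add a small corrective term at zero so as to preserve both the normalization at the origin and the structural bounds. Concretely, I would extend $\phi,\nu,\sigma$ to all of $\R$ by reflection (keeping $\phi(0)=\sigma(0)=0$), mollify with a standard smoothing kernel $\rho_\eta$, and then set
\begin{equation}
\phi^\eta(z)=\int_0^z\bigl(\dot\phi\ast\rho_\eta\bigr)(y)\,dy+\eta z,\qquad \nu^\eta(z)=\nu(0)+\int_0^z\bigl(\dot\nu\ast\rho_\eta\bigr)(y)\,dy,
\end{equation}
so that $\phi^\eta(0)=0$, $\dot\phi^\eta\ge\eta>0$, and $\nu^\eta$ is smooth on $(0,\infty)$; the linear correction $\eta z$ secures hypothesis (i) of Theorem~\ref{theorem/existence and uniqueness of weak solution to dean_kawasaki}. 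For $\sigma$ I would further impose a smooth cutoff $\chi_\eta\in C_c^\infty([0,\infty))$ equal to $1$ on $[0,1/\eta]$ and vanishing on $[2/\eta,\infty)$, and define
\begin{equation}
\sigma^\eta(z)=\int_0^z\chi_\eta(y)\,\bigl(\dot\sigma\ast\rho_\eta\bigr)(y)\,dy,
\end{equation}
which gives $\sigma^\eta(0)=0$ and $\dot\sigma^\eta\in C_c^\infty([0,\infty))$, securing hypothesis (ii) of the same theorem.

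With these definitions, standard properties of mollification immediately yield statement (i): $\phi^\eta\to\phi$, $\nu^\eta\to\nu$, $\sigma^\eta\to\sigma$ in $C^1_{\text{loc}}((0,\infty))\cap C_{\text{loc}}([0,\infty))$ as $\eta\to0$, since the mollifiers converge uniformly on compact subsets of $(0,\infty)$ where the derivatives exist and are locally Lipschitz, and $\chi_\eta\to 1$ locally uniformly. Point (iii) in Assumption~\ref{assumption/assumption C1}, the bound $\limsup_{z\to0^+}\sigma^\eta(z)^2/z\le c$, is preserved because mollification of $\dot\sigma$ on a compactly supported neighbourhood of $0$ gives $\sigma^\eta(z)\le\sigma(z)+o(1)\cdot z^{1/2}$ up to a constant, and the extra factor $\chi_\eta\le 1$ only decreases this.

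The bulk of the proof is then the verification of each of the conditions in Assumptions~\ref{assumption/assumption C1}, \ref{assumption/assumption C2 weak}, \ref{assumption/assumption C2} uniformly in $\eta\in(0,1)$, which I would tackle item by item. The polynomial growth and differential bounds in Assumption~\ref{assumption/assumption C2} and \ref{assumption/assumption C2 weak} are preserved under mollification and truncation because pointwise upper bounds in terms of $z^\alpha$ pass through convolution with a nonnegative kernel up to multiplicative constants independent of $\eta$, and the cutoff $\chi_\eta$ only reduces $\sigma^\eta$ and its derivatives outside $[0,2/\eta]$. The $\Theta_{\phi,p}$-type condition (vi) in Assumption~\ref{assumption/assumption C1} is monotone in $\dot\phi$ through the explicit formula \eqref{equation/thetha phi p}: the added term $\eta z$ can only increase $\dot\Theta_{\phi^\eta,p}$, which preserves \eqref{equation/assumption c1 vii A} and \eqref{equation/assumption c1 vii B} with the same constants, while mollification distorts $\Theta$ by an $\eta$-vanishing amount on any fixed $[\delta,\infty)$. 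Conditions (vii) and (viii) involving $\sigma^\eta$, $\dot\sigma^\eta$, $\nu^\eta$, $\ddot\sigma^\eta$ against $\Theta_{\phi^\eta,p}$ follow by the same monotonicity argument together with the cutoff, since outside $[0,2/\eta]$ the left-hand sides vanish for the $\sigma^\eta$-terms.

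The step I expect to be most delicate is the uniform preservation of the oscillation conditions \eqref{equation/assumption c1 v} and \eqref{equation/assumption c1 vi}: these control $\sup_{z'\in[0,z]}\sigma^2(z')$ and $\sup_{z'\in[0,z]}\nu^2(z')$ by $c(1+z+\sigma^2(z)+\nu^2(z))$, and because they are supremum-type conditions, mollification could in principle inflate them on the transition region where $\chi_\eta$ switches from $1$ to $0$. I would handle this by choosing the cutoff $\chi_\eta$ so that $\sigma^\eta$ is monotone nondecreasing on $[1/\eta,2/\eta]$, or by replacing $\chi_\eta$ by an integrated cutoff that makes $\sigma^\eta$ constant beyond a large threshold depending on $\eta$, so that the supremum over $[0,z]$ is essentially $\sup_{z'\in[0,z]}(\sigma\ast\rho_\eta)^2(z')$, which inherits the uniform bound from $\sigma$ up to absorbing an $\eta$-independent additive constant. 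The analogous argument for $\nu^\eta$ uses only mollification since no cutoff is imposed there. Together with dominated convergence, all estimates then hold with constants depending only on those appearing in the assumptions on $(\phi,\nu,\sigma)$, completing the proof.
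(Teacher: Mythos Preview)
Your proposal is correct and follows exactly the approach the paper indicates: the paper's entire proof is the single sentence ``A standard smoothing procedure by convolution and cut-off,'' and your construction---mollifying the derivatives, adding the linear corrective $\eta z$ to $\phi$ for nondegeneracy, and imposing a smooth cutoff on $\dot\sigma$ for compact support---is precisely such a procedure, just spelled out in detail. Your identification of the oscillation conditions \eqref{equation/assumption c1 v}--\eqref{equation/assumption c1 vi} as the point requiring the most care is apt, but since the paper offers no further argument there either, your sketch already exceeds what the paper provides.
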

\begin{proof}
A standard smoothing procedure by convolution and cut-off.
\end{proof}

\begin{proposition}\label{proposition/lp estimates}
Let $\rho_0$ satisfy Assumption \ref{assumption/assumption I1}(ii), let $\xi^\epsilon$ satisfy Assumption \ref{assumption/assumption N1} and let $\phi,\nu,\sigma$ satisfy Assumption \ref{assumption/assumption C1} and \ref{assumption/assumption C2}, for some $p\geq2$ and $m\geq1$.
Let $\rho^\epsilon$ be the stochastic kinetic solution to \eqref{equation/generalized dean--kawasaki equation 3} with initial data $\rho_0$, let $\Bar{\rho}\equiv\rho_0$ be the solution of \eqref{equation/hydrodynamic limit equation 1} and define $v^\epsilon=\epsilon^{-\nicefrac{1}{2}}(\rho^\epsilon-\Bar{\rho})$.
Then, for any $T\in[0,\infty)$ and any $h\in\big[1,\frac{p}{(k+1)}\big]$, we have
{\small
\begin{equation}\label{equation/lp estimates}
    \E\left[\|v^\epsilon\|_{L^h([0,T];\T^d)}^h\right]\leq C\,\|F_3^\epsilon\|_{\infty}^{\nicefrac{h}{2}}(1+\epsilon\,\|F_3^\epsilon\|_{\infty})^{\frac{d}{2} (p+m)}\left(1+\E\left[\|\rho_0\|_{L^1(\T^d)}^{m+p-1}+\|\rho_0\|_{L^{p}(\T^d)}^{p}\right]\right),
\end{equation}
}
for a constant $C=C(T,\phi,\nu,\sigma,p)$ depending on $\phi$, $\nu$ and $\sigma$ only through the constants $c$ featuring in Assumption \ref{assumption/assumption C1} and \ref{assumption/assumption C2}.
\end{proposition}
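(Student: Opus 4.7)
The plan is to work with regularized solutions where It\^o's formula applies rigorously, apply It\^o to $\int_{\T^d}|v^{\epsilon,\eta}|^h$, isolate the noise quadratic variation and manipulate it so that the sharp power $\|F_3^\epsilon\|_\infty^{h/2}$ emerges via a tailored Young inequality, close with Gr\"onwall and the moment estimate \eqref{equation/estimate 1 for kinetic solutions}, and finally transfer the bound to $\rho^\epsilon$ using continuous dependence on the coefficients (Proposition \ref{proposition/kinetic solutions of dean-kawasaki depend continuously on the coefficients}).

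\textbf{Regularization.} By Lemma \ref{lemma/smooth approximations phi^eta sigma^eta}, produce coefficients $\phi^\eta,\nu^\eta,\sigma^\eta$ satisfying Assumption \ref{assumption/assumption C1}, \ref{assumption/assumption C2} and the additional hypotheses of Theorem \ref{theorem/existence and uniqueness of weak solution to dean_kawasaki} uniformly in $\eta$. The corresponding weak solution $\rho^{\epsilon,\eta}$ then exists, and by Proposition \ref{proposition/weak solution implies kinetic solution to dean-kawasaki} it is also a stochastic kinetic solution, hence satisfies \eqref{equation/estimate 1 for kinetic solutions} with $\eta$-independent constants. Since $\bar\rho\equiv\rho_0$ is constant in space-time, $\Delta\phi^\eta(\bar\rho)\equiv 0\equiv\nabla\cdot\nu^\eta(\bar\rho)$ and $v^{\epsilon,\eta}\coloneqq\epsilon^{-\nicefrac12}(\rho^{\epsilon,\eta}-\rho_0)$ satisfies the smoothed analogue of \eqref{equation/fluctuation equation 1}.

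\textbf{It\^o's formula and the key Young split.} The regularity in Definition \ref{definition/weak solution to generalized dean kawasaki} justifies applying It\^o's formula to $\int_{\T^d}|v^{\epsilon,\eta}|^h$. Integration by parts on the diffusion produces the nonnegative dissipation $h(h-1)\int|v^{\epsilon,\eta}|^{h-2}\dot\phi^\eta(\rho^{\epsilon,\eta})|\nabla v^{\epsilon,\eta}|^2$, which is discarded. The drift term, after integration by parts, becomes $h(h-1)\epsilon^{-\nicefrac12}\int|v^{\epsilon,\eta}|^{h-2}\nabla v^{\epsilon,\eta}\cdot\nu^\eta(\rho^{\epsilon,\eta})$; Taylor expanding $\nu^\eta(\rho^{\epsilon,\eta})$ around $\rho_0$ and invoking the torus identities
\begin{equation*}
\int_{\T^d}|v|^{h-2}\nabla v\,dx=0,\qquad \int_{\T^d}|v|^{h-2}v\,\nabla v\,dx=0
\end{equation*}
(valid since $\rho_0$ is a constant) cancels the zeroth- and first-order Taylor pieces, leaving only an $O(\sqrt\epsilon)$ remainder controlled by Assumption \ref{assumption/assumption C2}(ii) together with Cauchy-Schwarz. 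The Stratonovich correction carries an explicit $\epsilon^{\nicefrac12}$ factor and is dominated analogously via Assumption \ref{assumption/assumption C2}(i). The stochastic integral is a mean-zero martingale. The decisive contribution is the quadratic variation, whose leading part is
\begin{equation*}
\tfrac{h(h-1)}{2}\,\E\int_0^t\!\!\int_{\T^d}|v^{\epsilon,\eta}|^{h-2}F_3^\epsilon\,\sigma^\eta(\rho^{\epsilon,\eta})^2\,dx\,ds.
\end{equation*}
Using $\sigma^\eta(z)^2\leq c(1+z^{2(k+1)})$ from Assumption \ref{assumption/assumption C2}(i) and Young's inequality with conjugate exponents $\tfrac{h}{h-2}$ and $\tfrac{h}{2}$,
\begin{equation*}
|v|^{h-2}\,\|F_3^\epsilon\|_\infty\,(1+\rho^{2(k+1)})\leq\tfrac{h-2}{h}|v|^h+C\,\|F_3^\epsilon\|_\infty^{h/2}\bigl(1+\rho^{(k+1)h}\bigr),
\end{equation*}
and the hypothesis $h(k+1)\leq p$ gives $\rho^{(k+1)h}\leq 1+\rho^p$.

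\textbf{Closure and passage to the limit.} Summing the bounds, integrating in time, and applying Gr\"onwall produces
\begin{equation*}
\E\bigl[\|v^{\epsilon,\eta}\|_{L^h([0,T]\times\T^d)}^h\bigr]\leq C(T)\,\|F_3^\epsilon\|_\infty^{h/2}\Big(1+\E\!\int_0^T\!\!\!\int_{\T^d}(\rho^{\epsilon,\eta})^p\Big),
\end{equation*}
and \eqref{equation/estimate 1 for kinetic solutions} controls the $L^p$ moment uniformly in $\eta$, yielding the target inequality for $v^{\epsilon,\eta}$. Proposition \ref{proposition/kinetic solutions of dean-kawasaki depend continuously on the coefficients} gives $\rho^{\epsilon,\eta}\to\rho^\epsilon$ in $L^1([0,T];L^1(\T^d))$ in probability, hence along a subsequence a.s.\ a.e.; Fatou's lemma then transfers the inequality to $v^\epsilon$. \textbf{The main obstacle} is producing the sharp exponent $h/2$ on $\|F_3^\epsilon\|_\infty$ rather than the naive $1$: this is achieved precisely by the $(h-2,2)$ Young split above, which requires the calibration $h(k+1)\leq p$ between the polynomial growth of $\sigma$ and the available $L^p$ moment of $\rho^{\epsilon,\eta}$ from Theorem \ref{theorem/existence and uniquess of kinetic solutions to dean-kawasaki}---exactly the hypothesis imposed on $h$ in the statement. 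Working at the $\eta$-level also circumvents the secondary issue of justifying It\^o's formula under the limited regularity of a generic stochastic kinetic solution.
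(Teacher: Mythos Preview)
Your overall architecture---regularize via Lemma \ref{lemma/smooth approximations phi^eta sigma^eta}, apply It\^o's formula, isolate the $F_3^\epsilon$ quadratic-variation term, use the Young split with exponents $\tfrac{h}{h-2},\tfrac{h}{2}$ to extract $\|F_3^\epsilon\|_\infty^{h/2}$, close with \eqref{equation/estimate 1 for kinetic solutions}, and pass to the limit by Fatou---matches the paper. However, two of your intermediate claims are wrong, and without fixing them the argument does not close.

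\textbf{The Stratonovich correction does not carry an $\epsilon^{1/2}$ factor.} After integration by parts in It\^o's formula the correction is tested against $h(h-1)|v^{\epsilon,\eta}|^{h-2}\nabla v^{\epsilon,\eta}$, and since $\nabla v^{\epsilon,\eta}=\epsilon^{-1/2}\nabla\rho^{\epsilon,\eta}$ the prefactor $\sqrt\epsilon$ is cancelled. You are then left with an $O(1)$ term $-\tfrac12 h(h-1)\int|v^{\epsilon,\eta}|^{h-2}\bigl(F_1^\epsilon(\dot\sigma^\eta)^2|\nabla\rho^{\epsilon,\eta}|^2+\sigma^\eta\nabla\sigma^\eta\!\cdot F_2^\epsilon\bigr)$, which blows up with $\|F_1^\epsilon\|_\infty$. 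The correct mechanism, used in the paper, is that this term cancels \emph{exactly} against the $F_1^\epsilon|\nabla\sigma^\eta|^2$ piece and half of the $2\sigma^\eta\nabla\sigma^\eta\!\cdot F_2^\epsilon$ piece of the quadratic variation; the surviving $\tfrac12 h(h-1)\int|v^{\epsilon,\eta}|^{h-2}\sigma^\eta\nabla\sigma^\eta\!\cdot F_2^\epsilon$ is then killed by writing the integrand as a divergence and invoking $\nabla\!\cdot F_2^\epsilon=0$ from Assumption \ref{assumption/assumption N1}. Only after these exact cancellations is the $F_3^\epsilon$ term genuinely the sole survivor.

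\textbf{The $\nu$ term vanishes exactly, not approximately.} Your Taylor expansion leaves the remainder $\tfrac{h(h-1)}{2}\sqrt\epsilon\int|v^{\epsilon,\eta}|^{h}\nabla v^{\epsilon,\eta}\!\cdot\ddot\nu^\eta(\xi)$, which still contains $\nabla v^{\epsilon,\eta}$ and cannot be controlled once the dissipation has been discarded (and splitting it against the dissipation produces a term $\epsilon\int|v^{\epsilon,\eta}|^{h+2}(1+\rho^{2g})$ that does not close with the available moments). The paper instead observes that
\[
|v^{\epsilon,\eta}|^{h-2}\nabla v^{\epsilon,\eta}\cdot\epsilon^{-1/2}\bigl(\nu^\eta(\rho^{\epsilon,\eta})-\nu^\eta(\bar\rho)\bigr)
=\nabla_x\!\left(\int_{\bar\rho}^{\rho^{\epsilon,\eta}}\Bigl|\tfrac{z-\bar\rho}{\sqrt\epsilon}\Bigr|^{h-2}\epsilon^{-1/2}\bigl(\nu^\eta(z)-\nu^\eta(\bar\rho)\bigr)\,dz\right),
\]
which integrates to zero on the torus. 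No Gr\"onwall is then needed: one integrates the resulting identity over $t\in[0,T]$, absorbs the $\tfrac12\E\|v^{\epsilon,\eta}\|_{L^h}^h$ from Young directly, and concludes.
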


\begin{proof}
Consider the sequence $(\phi^\eta,\nu^\eta,\sigma^\eta)_{\eta\in(0,1)}$ of approximations given by Lemma \ref{lemma/smooth approximations phi^eta sigma^eta}.
Let $\rho^{\epsilon,\eta}$ be the corresponding weak solution of \eqref{equation/dean--kawasaki with smoothed coefficients 2}, whose existence is guaranteed by Theorem \ref{theorem/existence and uniqueness of weak solution to dean_kawasaki}, and let $\Bar{\rho}\equiv\rho_0$ be the unique solution of $\partial_t\Bar{\rho}=\Delta\phi^{\eta}(\Bar{\rho})-\nabla\!\cdot\!\nu^\eta(\bar{\rho})$ with initial data $\rho_0$.
It follows that the fluctuations $v^{\epsilon,\eta}=\epsilon^{-\nicefrac{1}{2}}(\rho^{\epsilon,\eta}-\Bar{\rho})$ are a weak solution, in the sense of Definition \ref{definition/weak solution to generalized dean kawasaki}, to
\begin{align}[left ={\empheqlbrace}]\label{proposition/lp estimates/proof 1}
\de v^{\epsilon,\eta}=&\Delta\left(\epsilon^{-\nicefrac{1}{2}}\big(\phi^\eta(\rho^{\epsilon,\eta})-\phi^\eta(\Bar{\rho})\big)\right)\de t-\nabla\cdot\left(\epsilon^{-\nicefrac{1}{2}}\big(\nu^\eta(\rho^{\epsilon,\eta})-\nu^\eta(\bar{\rho})\big)\right)\de t-\nabla\!\cdot\!\left(\sigma^\eta(\rho^{\epsilon,\eta}) \,\de\xi^{\epsilon}\right)
\\
&+\frac{\sqrt{\epsilon}}{2}\nabla\cdot\left(F_1^\epsilon \Dot{\sigma}^\eta(\rho^{\epsilon,\eta})\nabla\sigma^\eta(\rho^{\epsilon,\eta})+\dot{\sigma}^\eta(\rho^{\epsilon,\eta})\sigma^\eta(\rho^{\epsilon,\eta})F_2^\epsilon\right)\de t,
\\
v^{\epsilon,\eta}(\cdot,0)&=\,\,0.
\end{align}
Applying It\^o's formula, in the version proved in Krylov \cite{krylov_ito_formula}, yields:
\begin{align}\label{proposition/lp estimates/proof 2}
    \begin{split}
        \int_{\T^d}\left|v^{\epsilon,\eta}(x,t)\right|^h dx
        =
        &-\int_0^t\int_{\T^d}h(h-1)\left|v^{\epsilon,\eta}(x,t)\right|^{h-2}\nabla v^{\epsilon,\eta}\nabla\left(\epsilon^{-\nicefrac{1}{2}}\left(\phi^\eta(\rho^{\epsilon,\eta})-\phi^\eta(\Bar{\rho})\right)\right)\,dx\,ds
        \\
        &+\int_0^t\int_{\T^d}h(h-1)\left|v^{\epsilon,\eta}(x,t)\right|^{h-2}\nabla v^{\epsilon,\eta}\epsilon^{-\nicefrac{1}{2}}\big(\nu^\eta(\rho^{\epsilon,\eta})-\nu^\eta(\Bar{\rho})\big)\,dx\,ds
        \\
        &-\int_0^t\int_{\T^d}h\left|v^{\epsilon,\eta}(x,t)\right|^{h-2} v^{\epsilon,\eta}\nabla\cdot\left(\sigma^\eta(\rho^{\epsilon,\eta})\de \xi^\epsilon\right)
        \\
        &-\frac{\sqrt{\epsilon}}{2}\int_0^t\int_{\T^d}h(h-1)\left|v^{\epsilon,\eta}(x,t)\right|^{h-2}\nabla v^{\epsilon,\eta}
        \\
        &\qquad\qquad\cdot\left(F_1^\epsilon \Dot{\sigma}^\eta(\rho^{\epsilon,\eta})\nabla\sigma^\eta(\rho^{\epsilon,\eta})+\dot{\sigma}^\eta(\rho^{\epsilon,\eta})\sigma^\eta(\rho^{\epsilon,\eta})F_2^\epsilon\right)\,dx\,ds
        \\
        &+\frac{1}{2}\int_0^t\int_{\T^d}h(h-1)\left|v^{\epsilon,\eta}(x,t)\right|^{h-2}
        \\
        &\qquad\qquad\cdot\left(\left|\nabla\sigma^\eta(\rho^{\epsilon,\eta})\right|^2F_1^\epsilon+2\sigma^\eta(\rho^{\epsilon,\eta})\nabla\sigma^\eta(\rho^{\epsilon,\eta}) F_2^\epsilon+\left(\sigma^\eta(\rho^{\epsilon,\eta})\right)^2F_3^\epsilon\right)\,dx\,ds.
    \end{split}
\end{align}
Rearranging the terms, and using that $\nabla v^{\epsilon,\eta}=\epsilon^{-\nicefrac{1}{2}}\nabla\rho^{\epsilon,\eta}$ since $\Bar{\rho}\equiv\text{constant}$, the smoothness of $\phi^\eta$, $\nu^\eta$, $\sigma^\eta$, and the regularity properties of $\rho^{\epsilon,\eta}$ guaranteed by Theorem \ref{theorem/existence and uniqueness of weak solution to dean_kawasaki}, we obtain
\begin{align}\label{proposition/lp estimates/proof 2.bis}
    \begin{split}
        \int_{\T^d}&\left|v^{\epsilon,\eta}(x,t)\right|^h dx
        +\int_0^t\int_{\T^d}h(h-1)\left|v^{\epsilon,\eta}(x,s)\right|^{h-2}\nabla v^{\epsilon,\eta}\nabla\left(\epsilon^{-\nicefrac{1}{2}}\left(\phi^\eta(\rho^{\epsilon,\eta})-\phi^\eta(\Bar{\rho})\right)\right)\,dx\,ds
        \\
        =&\int_0^t\int_{\T^d}h(h-1)\left|v^{\epsilon,\eta}(x,s)\right|^{h-2}\nabla v^{\epsilon,\eta}\epsilon^{-\frac{1}{2}}\big(\nu^\eta(\rho^{\epsilon,\eta})-\nu^\eta(\Bar{\rho})\big)\,dx\,ds
        \\
        &-\int_0^t\int_{\T^d}h\left|v^{\epsilon,\eta}(x,s)\right|^{h-2} v^{\epsilon,\eta}\nabla\cdot\left(\sigma^\eta(\rho^{\epsilon,\eta})\de \xi^\epsilon\right)
        \\
        &+\frac{1}{2}\int_0^t\int_{\T^d}h(h-1)\left|v^{\epsilon,\eta}(x,s)\right|^{h-2} \sigma^\eta(\rho^{\epsilon,\eta})\nabla\sigma^\eta(\rho^{\epsilon,\eta})\cdot F_2^\epsilon\,dx\,ds
        \\
        &+\frac{1}{2}\int_0^t\int_{\T^d}h(h-1)\left|v^{\epsilon,\eta}(x,s)\right|^{h-2} \left(\sigma^\eta(\rho^{\epsilon,\eta})\right)^2F_3^\epsilon\,dx\,ds.
    \end{split}
\end{align}
The first term on the right-hand side of \eqref{proposition/lp estimates/proof 2.bis} is identically zero because of integration by parts and the rewriting
\begin{align}\label{proposition/lp estimates/ proof 3}
\begin{split}
    \int_{\T^d}&h(h-1)\left|v^{\epsilon,\eta}(x,s)\right|^{h-2}\nabla v^{\epsilon,\eta}\,\epsilon^{-\nicefrac{1}{2}}\big(\nu^\eta(\rho^{\epsilon,\eta})-\nu^\eta(\Bar{\rho})\big)\,dx
    \\
    &=
    \int_{\T^d}h(h-1)\nabla\cdot\left(\int_0^{\rho^{\epsilon,\eta}}\left|\frac{z-\bar{\rho}}{\sqrt{\epsilon}}\right|^{h-2}\epsilon^{-\nicefrac{1}{2}}\big(\nu^\eta(z)-\nu^\eta(\bar{\rho})\big)\,dz\right)\,dx
    =
    0.
\end{split}
\end{align}
Similarly, the third term on the right-hand side of \eqref{proposition/lp estimates/proof 2.bis} vanishes because of the assumption $\nabla\cdot F_2^\epsilon=0$ and the integration by parts
\begin{align}\label{proposition/lp estimates/proof 4}
\begin{split}
    \int_{\T^d}\left|v^{\epsilon,\eta}(x,s)\right|^{h-2}\sigma^\eta(\rho^{\epsilon,\eta})\nabla\sigma^\eta(\rho^{\epsilon,\eta})\cdot F_2^\epsilon\,dx
    =
    -\int_{\T^d}\left(\int_0^{\rho^{\epsilon,\eta}}\left|\frac{z-\Bar{\rho}}{\sqrt{\epsilon}}\right|^{h-2}\sigma^\eta(z)\,\dot{\sigma}^\eta(z)\,dz\right)\nabla\cdot F_2^\epsilon\,dx.
\end{split}
\end{align}
Furthermore, the second term on the right-hand side of \eqref{proposition/lp estimates/proof 2.bis} is a true martingale vanishing at time zero, thanks to Remark \ref{remark/derivatives in kinetic formulation and interpretation of stochastic integral}(ii), Assumption \ref{assumption/assumption N1} and Assumption (i)-(ii) in Theorem \ref{theorem/existence and uniqueness of weak solution to dean_kawasaki}, and the regularity of $\rho^{\epsilon,\eta}$.
Therefore, applying the expectation to \eqref{proposition/lp estimates/proof 2.bis} and further integrating over $t\in[0,T]$ yields:
\begin{align}\label{proposition/lp estimates/proof 3}
    \begin{split}
        \E\left[\left\|v^{\epsilon,\eta}\right\|^h_{L^h([0,T];\T^d)}\right]
        &+
        \E\left[\int_0^T\int_{\T^d}h(h-1)\left|v^{\epsilon,\eta}(x,t)\right|^{h-2}\left|\nabla v^{\epsilon,\eta}\right|^2\dot{\phi}^\eta(\rho^{\epsilon,\eta})\,(T-s)\,dx\,ds\right]
        \\
        =&
        \,\,\frac{1}{2}\E\left[\int_0^T\int_{\T^d}h(h-1)\left|v^{\epsilon,\eta}(x,t)\right|^{h-2}\left(\sigma^\eta(\rho^{\epsilon,\eta})\right)^2F_3^\epsilon\,(T-s)\,dx\,ds\right].
    \end{split}
\end{align}

We now estimate the right-hand side of \eqref{proposition/lp estimates/proof 3} as follows.
Assumption \ref{assumption/assumption C2}(i) on $\sigma$ and Lemma \ref{lemma/smooth approximations phi^eta sigma^eta} ensure that
\begin{equation}\label{proposition/lp estimates/proof 5}
    |\sigma^\eta(z)|\leq C\left(1+|z|^{k+1}\right),
\end{equation}
for a constant $C=C(\sigma)$ independent of $\eta\in(0,1)$.
Thus, applying H\"older's inequality, Young's inequality and formula \eqref{proposition/lp estimates/proof 5}, noticing that $h(k+1)\leq p$, yields:
\begin{align}\label{proposition/lp estimates/proof 6}
\begin{split}
    \frac{1}{2}\E&\left[\int_0^T\int_{\T^d}h(h-1)\left|v^{\epsilon,\eta}(x,s)\right|^{h-2}\left(\sigma^\eta(\rho^{\epsilon,\eta})\right)^2F_3^\epsilon\,(T-s)\,dx\,ds\right]
    \\
    &\leq\, C\,\left\|F_3^\epsilon\right\|_{\infty}
    \E\left[\int_0^T\int_{\T^d}\left|v^{\epsilon,\eta}\right|^{h}dx\,ds\right]^{\frac{h-2}{h}}
    \E\left[\int_0^T\int_{\T^d}\left(\sigma^\eta(\rho^{\epsilon,\eta})\right)^hdx\,ds\right]^{\frac{2}{h}}
    \\
    &\leq
    \frac{1}{2}\E\left[\left\|v^{\epsilon,\eta}\right\|^h_{L^h([0,T];\T^d)}\right]
    +
    C\,\left\|F_3^\epsilon\right\|_{\infty}^{\nicefrac{h}{2}}
    \left(1+\E\left[\int_0^T\int_{\T^d}(\rho^{\epsilon,\eta})^{p}dx\,ds\right]\right),
\end{split}
\end{align}
for a constant $C=C(\sigma,p,T)$ independent of $\eta\in(0,1)$.
Going back to \eqref{proposition/lp estimates/proof 3}, we use \eqref{proposition/lp estimates/proof 6} and we absorb the first term on the right hand side of \eqref{proposition/lp estimates/proof 6} into the corresponding term on the left hand side of \eqref{proposition/lp estimates/proof 3}, thanks to the factor $\frac{1}{2}$ in front, to obtain
\begin{equation}\label{proposition/lp estimates/proof 7}
    \E\left[\left\|v^{\epsilon,\eta}\right\|_{L^h([0,T];\T^d)}^h\right]
    \leq 
    C\,\|F_3^\epsilon\|_{\infty}^{\nicefrac{h}{2}}\left(1+\E\left[\int_0^T\int_{\T^d}(\rho^{\epsilon,\eta})^{p}dx\,ds\right]\right),
\end{equation}
for a constant $C=C(\sigma,p,T)$ independent of $\eta$.

Now we recall that Lemma \ref{lemma/smooth approximations phi^eta sigma^eta} ensures that $\phi^\eta,\nu^\eta,\sigma^\eta$ satisfy Assumption \ref{assumption/assumption C1} and \ref{assumption/assumption C2} uniformly in $\eta\in(0,1)$, since these are satisfied by $\phi,\nu,\sigma$.
Therefore estimate \eqref{equation/estimate 1 for kinetic solutions} holds for a constant $C=C(\phi,\nu,\sigma,p,T)$ independent of $\eta\in(0,1)$.
Using estimate \eqref{equation/estimate 1 for kinetic solutions} with $n=p$ in \eqref{proposition/lp estimates/proof 7}, we obtain
\begin{align}\label{proposition/lp estimates/proof 8}
\begin{aligned}
    \E\left[\left\|v^{\epsilon,\eta}\right\|_{L^h([0,T];\T^d)}^h\right]
    \leq
    C\,\|F_3^\epsilon\|_{\infty}^{\nicefrac{h}{2}}
    \left(1+\epsilon\,\|F_3^\epsilon\|_{\infty}\right)^{\frac{d}{2}(p+m)}
    \left(1+\E\left[\|\rho_0\|_{L^1(\T^d)}^{m+p-1}+\|\rho_0\|_{L^{p}(\T^d)}^{p}\right]\right),
\end{aligned}
\end{align}
for a constant $C=C(\sigma,\nu,\phi,p,T)$ independent of $\eta$.
Finally, Proposition \ref{proposition/kinetic solutions of dean-kawasaki depend continuously on the coefficients} ensures that 
\[\lim_{\eta\to0}\left\|\rho^{\epsilon,\eta}-\rho^\epsilon\right\|_{L^1([0,T];L^1(\T^d))}=0\quad\text{in probability.}\]
Therefore, upon passing to a subsequence $\eta_n\to0$, we have $\rho^{\epsilon,\eta_n}\to\rho^\epsilon$ for a.e. $(x,t,\omega)\in\T^d\times[0,T]\times\Omega$.
Letting $\eta_n\to0$ in \eqref{proposition/lp estimates/proof 8} and applying Fatou's Lemma eventually yield formula \eqref{equation/lp estimates}.
\end{proof}

We are now ready to estimate the difference between the actual fluctuations $v^\epsilon$ and their asymptotic description $v$, and obtain the CLT in $L^2(\Omega)$.

\begin{theorem}[Central limit theorem in $L^2(\Omega)$]\label{theorem/clt for fluctuations 1}
Let $\rho_0$ satisfy Assumption \ref{assumption/assumption I1}(ii) and let $\phi,\nu,\sigma$ satisfy Assumption \ref{assumption/assumption C1} and \ref{assumption/assumption C2}, for some $p\ge2$ and $m\geq1$.
Let $\xi=\lim_{\epsilon\to0}\xi^\epsilon$, where $(\xi^\epsilon)_{\epsilon>0}$ satisfy Assumption \ref{assumption/assumption N2}, and let $v$ be the corresponding solution of the Langevin equation \eqref{equation/OU equation 1}.
For any $\epsilon>0$, let $\rho^\epsilon$ be the stochastic kinetic solution to the generalized Dean--Kawasaki equation \eqref{equation/generalized dean--kawasaki equation 3} with initial data $\rho_0$, let $\Bar{\rho}\equiv\rho_0$ be the solution of the zero noise limit\eqref{equation/hydrodynamic limit equation 1} and let $v^\epsilon=\epsilon^{-\nicefrac{1}{2}}(\rho^\epsilon-\Bar{\rho})$.
Then, for any $T\in[0,\infty)$, for $\tau=2$ or $\tau=\infty$, for any $\beta>\frac{d}{2}$ or $\beta>\frac{d}{2}+1$ respectively, we have
{\small
\begin{align}
\begin{split}
    \label{equation/rate of convergence 2}
    &\E\left[\left\|v^{\epsilon}(t)\!-\!v(t)\right\|^2_{L^{\tau}([0,T];H^{-\beta}(\T^d))}\right]
    \\
    &\leq
    C
    \left(\epsilon\left(|F_1^\epsilon|_{\infty}+|F_3^\epsilon|_{\infty}\right)^2+\epsilon^{\nicefrac{1}{2}}|F_3^\epsilon|_{\infty}^{\nicefrac{1}{2}}\right)
    \left(1+\epsilon\,|F_3^\epsilon|_\infty\right)^{g+\frac{d}{2}(p+m)}
    \left(1+\E\left[\|\rho_0\|_{L^1(\T^d)}^{m+p-1}+\|\rho_0\|_{L^p(\T^d)}^p\right]\right)
    \\
    &\quad
    +C\sum_{n\in\Z^d}n^{(2-\frac{4}{\tau})-2\beta}  \sum_{k} \int_0^T\left|\int_{\T^d} e^{i2\pi nx}\,(f_k^\epsilon-f_k)\,dx\right|^2 ds,
\end{split}
\end{align}
}
for a constant $C=C(T,\bar\rho,\phi,\nu,\sigma,p,\beta)$ depending on $\phi$, $\nu$ and $\sigma$ only through the constants $c$ featuring in Assumption \ref{assumption/assumption C1} and \ref{assumption/assumption C2}.

In particular, along a scaling regime where $\epsilon\to0$ and $\xi^\epsilon\to\xi$ such that
\begin{equation}
    \label{equation/scaling regime for noise}
    \lim_{\epsilon\to0}\,\,\sqrt{\epsilon}\,
    \Big(\!\left\|F_1^\epsilon\right\|_{\infty}\!\!+\left\|F_2^\epsilon\right\|_{\infty}+\|F_3^\epsilon\|_{\infty}\Big)=0,
\end{equation}
the nonequilibrium fluctuations $v^\epsilon$ converge to $v$ in $L^2(\Omega;L^\tau_{\text{loc}}([0,\infty);H^{-\beta}(\T^d)))$  with rate of convergence \eqref{equation/rate of convergence 2}.
\end{theorem}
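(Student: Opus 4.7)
The plan is to work with the smoothed approximations $(\phi^\eta,\nu^\eta,\sigma^\eta)_{\eta\in(0,1)}$ given by Lemma \ref{lemma/smooth approximations phi^eta sigma^eta}, so that Theorem \ref{theorem/existence and uniqueness of weak solution to dean_kawasaki} provides weak solutions $\rho^{\epsilon,\eta}$ of \eqref{equation/dean--kawasaki with smoothed coefficients 2} and the moment bound \eqref{equation/lp estimates} of Proposition \ref{proposition/lp estimates} holds uniformly in $\eta$. Setting $v^{\epsilon,\eta}=\epsilon^{-1/2}(\rho^{\epsilon,\eta}-\bar\rho)$ and letting $v^\eta$ be the solution of \eqref{equation/OU equation 1} with coefficients $\phi^\eta,\nu^\eta,\sigma^\eta$ and limit noise $\xi$ from Theorem \ref{theorem/ well-posedness of OU equation}, I will derive the equation for $w^{\epsilon,\eta}:=v^{\epsilon,\eta}-v^\eta$ by Taylor expansion around $\bar\rho$:
\begin{equation*}
\phi^\eta(\rho^{\epsilon,\eta})-\phi^\eta(\bar\rho)=\dot\phi^\eta(\bar\rho)(\rho^{\epsilon,\eta}-\bar\rho)+R^{\epsilon,\eta}_\phi,\qquad R^{\epsilon,\eta}_\phi=\int_{\bar\rho}^{\rho^{\epsilon,\eta}}\!\!(\rho^{\epsilon,\eta}-s)\ddot\phi^\eta(s)\,ds,
\end{equation*}
and analogously for $\nu^\eta$. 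Crucially, since $\bar\rho$ is a random \emph{constant}, $\dot\phi^\eta(\bar\rho)$ and $\dot\nu^\eta(\bar\rho)$ are spatial constants, so $w^{\epsilon,\eta}$ satisfies a linear constant-coefficient equation driven by the source $\epsilon^{-1/2}\Delta R^{\epsilon,\eta}_\phi-\epsilon^{-1/2}\nabla\cdot R^{\epsilon,\eta}_\nu$, the noise discrepancy $-\nabla\cdot(\sigma^\eta(\rho^{\epsilon,\eta})\,d\xi^\epsilon-\sigma^\eta(\bar\rho)\,d\xi)$, and the It\^o correction of order $\sqrt\epsilon$.

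Next I will perform Fourier analysis. Taking the $n$-th spatial Fourier coefficient $\hat w^{\epsilon,\eta}_n$, by Duhamel the only surviving $n=0$ mode is zero (mass conservation) while for $n\neq 0$
\begin{equation*}
\hat w^{\epsilon,\eta}_n(t)=\int_0^t e^{-(4\pi^2|n|^2\dot\phi^\eta(\bar\rho)+i2\pi n\cdot\dot\nu^\eta(\bar\rho))(t-s)}\,\mathcal{S}^{\epsilon,\eta}_n(s)\,ds,
\end{equation*}
where $\mathcal{S}^{\epsilon,\eta}_n$ collects the four source terms above. I will estimate $\E|\hat w^{\epsilon,\eta}_n(t)|^2$ in four pieces: $(i)$ the Taylor remainders are pointwise bounded by $\epsilon^{1/2}|v^{\epsilon,\eta}|^2(1+|\rho^{\epsilon,\eta}|^g)$ thanks to Assumption \ref{assumption/assumption C2}(ii), so Proposition \ref{proposition/lp estimates} with $h=2(k+1)\le p/(k+1)\cdot (k+1)$ yields a bound of order $\epsilon^{1/2}|F_3^\epsilon|_\infty(1+\epsilon|F_3^\epsilon|_\infty)^{g+\frac{d}{2}(p+m)}$ after multiplying by the diffusive factor $|n|^2$ and using the time-integrated heat-kernel gain $\int_0^t |n|^2 e^{-c|n|^2(t-s)}ds\le C$; $(ii)$ the It\^o correction contributes $\epsilon(|F_1^\epsilon|_\infty+|F_3^\epsilon|_\infty)^2$ after the same moment/heat-kernel combination; $(iii)$ the difference $\sigma^\eta(\rho^{\epsilon,\eta})-\sigma^\eta(\bar\rho)$ tested against $d\xi^\epsilon$ is handled by the mean-value theorem, Assumption \ref{assumption/assumption C2}(i), It\^o isometry, and Assumption \ref{assumption/assumption N2} to give another $O(\sqrt\epsilon)$ factor; $(iv)$ the noise mismatch $\sigma^\eta(\bar\rho)\nabla\cdot(d\xi^\epsilon-d\xi)$ gives exactly the last term on the right of \eqref{equation/rate of convergence 2} through It\^o isometry and \eqref{equation/ convergence of the noise sequence on C1 functions}.

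Summing $(1+|n|^2)^{-\beta}\E|\hat w^{\epsilon,\eta}_n|^2$ yields the $L^\tau_tH^{-\beta}_x$ norm: the $|n|^2$ prefactors from the diffusive sources, combined with the exponential kernel, give a convergent sum provided $\beta>d/2$ when $\tau=2$, whereas the $L^\infty_t$ case requires $\beta>d/2+1$ because one loses a time integration in $\int_0^t|n|^4 e^{-c|n|^2(t-s)}ds\lesssim|n|^2$. The main obstacle, and the step that will require the most care, is $(iii)$: proving a quantitative bound on the stochastic source $-\nabla\cdot((\sigma^\eta(\rho^{\epsilon,\eta})-\sigma^\eta(\bar\rho))\,d\xi^\epsilon)$, because the mean-value expression involves $\dot\sigma^\eta$ composed with an intermediate value in $[\bar\rho\wedge\rho^{\epsilon,\eta},\bar\rho\vee\rho^{\epsilon,\eta}]$ and must be controlled through H\"older's inequality using both the $L^{2(k+1)}$ moment of $v^{\epsilon,\eta}$ from Proposition \ref{proposition/lp estimates} and the growth bound of Assumption \ref{assumption/assumption C2}(i)—this is exactly where the restriction $h\in[1,p/(k+1)]$ is used. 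Finally, I will pass to the limit $\eta\to 0$: Proposition \ref{proposition/kinetic solutions of dean-kawasaki depend continuously on the coefficients} gives $\rho^{\epsilon,\eta}\to\rho^\epsilon$ in probability, Fatou's lemma transfers the moment bounds, and Theorem \ref{theorem/ well-posedness of OU equation} gives $v^\eta\to v$ in $L^2(\Omega;L^2_tH^{-\alpha}_x)$ because $\dot\phi^\eta(\bar\rho)\to\dot\phi(\bar\rho)$, $\dot\nu^\eta(\bar\rho)\to\dot\nu(\bar\rho)$ almost surely and $\sigma^\eta(\bar\rho)\to\sigma(\bar\rho)$ in $L^2(\Omega)$ by Assumption \ref{assumption/assumption I1}(ii) and dominated convergence; the estimates are uniform in $\eta$ so \eqref{equation/rate of convergence 2} passes to the limit, and the scaling statement \eqref{equation/scaling regime for noise} follows by inspection of the right-hand side.
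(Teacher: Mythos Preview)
Your proposal is correct and follows essentially the same approach as the paper: smooth the coefficients via Lemma \ref{lemma/smooth approximations phi^eta sigma^eta}, Taylor-expand $\phi^\eta,\nu^\eta$ around the constant $\bar\rho$ to obtain a constant-coefficient linear equation for $v^{\epsilon,\eta}-v^\eta$, solve mode-by-mode via Duhamel, estimate the four source terms using Proposition \ref{proposition/lp estimates} together with Assumption \ref{assumption/assumption C2} and the heat-kernel gain $\int_0^t|n|^2e^{-c|n|^2(t-s)}ds\le C$, sum over $n$, and pass $\eta\to0$ by Fatou's lemma (for $v^{\epsilon,\eta}\to v^\epsilon$ via Proposition \ref{proposition/kinetic solutions of dean-kawasaki depend continuously on the coefficients}) and Theorem \ref{theorem/ well-posedness of OU equation} (for $v^\eta\to v$). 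The paper handles your step (iii) via the identity $|\sigma^\eta(\rho^{\epsilon,\eta})-\sigma^\eta(\bar\rho)|^2\le|\rho^{\epsilon,\eta}-\bar\rho|\int_{\bar\rho}^{\rho^{\epsilon,\eta}}(\dot\sigma^\eta)^2$ rather than the mean-value theorem, but this is a cosmetic difference.
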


\begin{proof}
As in the proof of Proposition \ref{proposition/lp estimates}, consider the sequence of approximations $(\phi^\eta,\nu^\eta,\sigma^\eta)_{\eta\in(0,1)}$ given in Lemma \ref{lemma/smooth approximations phi^eta sigma^eta}, let $\rho^{\epsilon,\eta}$ be the solution of \eqref{equation/dean--kawasaki with smoothed coefficients 2}, let $\Bar{\rho}\equiv\rho_0$ be the solution of \eqref{equation/hydrodynamic limit equation 1} and let $v^{\epsilon,\eta}=\epsilon^{-\nicefrac{1}{2}}(\rho^{\epsilon,\eta}-\Bar{\rho})$ be the corresponding fluctuations.
Finally let $v^{\eta}$ be the solution of
\begin{equation}\label{theorem/clt for fluctuations 1/proof 1}
    \de v^\eta=\Delta\big(\dot{\phi}^\eta(\Bar{\rho})\, v^\eta\big)\,\de t-\nabla\cdot\left(\dot{\nu}(\bar\rho)\,v^\eta\right)\,\de t-\nabla\cdot(\sigma^\eta(\Bar{\rho})\,\de \xi),\quad v^\eta(\cdot,0)=0,
\end{equation}
as given in Theorem \ref{theorem/ well-posedness of OU equation}.
The difference $v^{\epsilon,\eta}-v^\eta$ is a weak solution of
{\small
\begin{align}[left ={\empheqlbrace}]\label{theorem/clt for fluctuations 1/proof 2}
\begin{split}
\de (v^{\epsilon,\eta}-v^\eta)=&\Delta\left(\epsilon^{-\nicefrac{1}{2}}\big(\phi^\eta(\rho^{\epsilon,\eta})-\phi^\eta(\Bar{\rho})\big)-\dot{\phi}^\eta(\Bar{\rho})v^\eta\right)\de t,
\\
&
-\nabla\cdot\left(\epsilon^{-\nicefrac{1}{2}}\big(\nu^\eta(\rho^{\epsilon,\eta})-\nu^\eta(\Bar{\rho})\big)-\dot{\nu}^\eta(\Bar{\rho})v^\eta\right)\de t
-\nabla\!\cdot\!\big(\sigma^\eta(\rho^{\epsilon,\eta}) \,\de\xi^{\epsilon}-\sigma^\eta(\Bar{\rho})\,\de\xi\big)
\\
&+\frac{\epsilon^{\nicefrac{1}{2}}}{2}\nabla\cdot\left(F_1^\epsilon \left(\Dot{\sigma}^\eta(\rho^{\epsilon,\eta})\right)^2\nabla\rho^{\epsilon,\eta}+\dot{\sigma}^\eta(\rho^{\epsilon,\eta})\sigma^\eta(\rho^{\epsilon,\eta})F_2^\epsilon\right)\de t,
\\
v^{\epsilon,\eta}(\cdot,0)=&\,\,0.
\end{split}
\end{align}}
Using the fundamental theorem of calculus, this is rewritten as 
{\small
\begin{align}\label{theorem/clt for fluctuations 1/proof 3}
\begin{split}
\de (v^{\epsilon,\eta}-v^\eta)
=
&
\Delta\left(\dot{\phi}^\eta(\Bar{\rho})(v^{\epsilon,\eta}-v^{\eta})
+\int_0^1\int_0^1\ddot{\phi}^{\eta}(\bar\rho+\lambda\mu(\rho^{\epsilon,\eta}-\bar\rho))\,\lambda\,d\lambda d\mu\,(v^{\epsilon,\eta})^2\,\epsilon^{\nicefrac{1}{2}}\right)\de t
\\
&
-\nabla\cdot\left(\dot{\nu}^\eta(\Bar{\rho})(v^{\epsilon,\eta}-v^{\eta})
+\int_0^1\int_0^1\ddot{\nu}^{\eta}(\bar\rho+\lambda\mu(\rho^{\epsilon,\eta}-\bar\rho))\,\lambda\,d\lambda d\mu\,(v^{\epsilon,\eta})^2\,\epsilon^{\nicefrac{1}{2}}\right)\de t
\\
&
+\frac{\epsilon^{\nicefrac{1}{2}}}{2}\nabla\cdot\left(F_1^\epsilon\,\, \nabla\!\int_0^{\rho^{\epsilon,\eta}}\!\!\!(\dot{\sigma}^\eta(z))^2\,dz\,+\dot{\sigma}^\eta(\rho^{\epsilon,\eta})\sigma^\eta(\rho^{\epsilon,\eta})F_2^\epsilon\right)\de t
\\
&
-\nabla\!\cdot\!\big((\sigma^\eta(\rho^{\epsilon,\eta})-\sigma^\eta(\Bar{\rho}))\,\de\xi^{\epsilon} + \sigma^\eta(\Bar{\rho})\,\de(\xi^\epsilon-\xi)\big).
\end{split}
\end{align}}

Using \eqref{theorem/clt for fluctuations 1/proof 3}, recalling that $\nabla F_1^\epsilon=2 F_2^\epsilon$ and integrating by parts several times, we compute the Fourier coefficients of $v^{\epsilon,\eta}-v^\eta$.
Namely, for each $n\in \Z^d$, for $e_n(x):=e^{i2\pi n\cdot x}$ and $\hat{v}^{\epsilon,\eta}_n-\hat{v}^\eta_n:=\langle v^{\epsilon,\eta}-v^{\eta},e_n\rangle$,
{\small
\begin{align}\label{theorem/clt for fluctuations 1/proof 4}
\begin{split}
\de (\hat{v}^{\epsilon,\eta}_n-\hat{v}^\eta_n)
=
&
\int_{\T^d}\Delta e_n(x)\left(\dot{\phi}^\eta(\Bar{\rho})(v^{\epsilon,\eta}-v^{\eta})
+\int_0^1\int_0^1\ddot{\phi}^{\eta}(\bar\rho+\lambda\mu(\rho^{\epsilon,\eta}-\bar\rho))\,\lambda\,d\lambda d\mu\,(v^{\epsilon,\eta})^2\,\epsilon^{\nicefrac{1}{2}}\right)d x\,\,\de t
\\
&
+\int_{\T^d}\nabla e_n(x)\,\left(\dot{\nu}^\eta(\Bar{\rho})(v^{\epsilon,\eta}-v^{\eta})
+\int_0^1\int_0^1\ddot{\nu}^{\eta}(\bar\rho+\lambda\mu(\rho^{\epsilon,\eta}-\bar\rho))\,\lambda\,d\lambda d\mu\,(v^{\epsilon,\eta})^2\,\epsilon^{\nicefrac{1}{2}}\right)dx\,\,\de t
\\
&
+\frac{\epsilon^{\nicefrac{1}{2}}}{2}
\int_{\T^d}\left(\Delta e_n F_1^\epsilon+\nabla e_n2 F_2^\epsilon\right)\int_0^{\rho^{\epsilon,\eta}}\!\!\!(\dot{\sigma}^\eta(z))^2\,dz\,
-
\nabla e_n F_2^\epsilon \dot{\sigma}^\eta(\rho^{\epsilon,\eta})\sigma^\eta(\rho^{\epsilon,\eta})\,dx\,\,\de t
\\
&
+\int_{\T^d}\nabla e_n(x)\big((\sigma^\eta(\rho^{\epsilon,\eta})-\sigma^\eta(\Bar{\rho}))\,\de\xi^{\epsilon} + \sigma^\eta(\Bar{\rho})\,\de(\xi^\epsilon-\xi)\big).
\end{split}
\end{align}}
This is an SDE for $\hat{v}^{\epsilon,\eta}_n-\hat{v}^\eta_n$, which is readily solved by variation of constants:
{\small
\begin{align}\label{theorem/clt for fluctuations 1/proof 5}
\begin{split}
&\hat{v}^{\epsilon,\eta}_n(t)-\hat{v}^\eta_n(t)
\\
&=
\epsilon^{\nicefrac{1}{2}}\int_0^te^{(-4\pi^2n^2\dot{\phi}^{\eta}(\bar\rho)+i2\pi n\dot{\nu}^{\eta}(\bar\rho))(t-s)}
\\
&\qquad\qquad
\cdot\left(
\int_{\T^d}\!\!\Delta e_n \int_0^1\int_0^1\big(\ddot{\phi}^{\eta}(\bar\rho+\lambda\mu(\rho^{\epsilon,\eta}-\bar\rho))+\ddot{\nu}^{\eta}(\bar\rho+\lambda\mu(\rho^{\epsilon,\eta}-\bar\rho))\big)\lambda\,d\lambda d\mu\,(v^{\epsilon,\eta}(x,s))^2dx\right)\de s
\\
&\,\,
+\frac{\epsilon^{\nicefrac{1}{2}}}{2}\!\int_0^t\!\!e^{(-4\pi^2n^2\dot{\phi}^{\eta}(\bar\rho)+i2\pi n\dot{\nu}^{\eta}(\bar\rho))(t-s)}
\\
&\qquad\qquad
\cdot\left(\!\int_{\T^d}\!\!\!\left(\Delta e_n F_1^\epsilon+\nabla e_n2 F_2^\epsilon\right)\int_0^{\rho^{\epsilon,\eta}}\!\!\!\!\!(\dot{\sigma}^\eta(z))^2\,dz\,
\!-\!
\nabla e_n F_2^\epsilon \dot{\sigma}^\eta(\rho^{\epsilon,\eta})\sigma^\eta(\rho^{\epsilon,\eta})\,dx\right)\de s
\\
&\,\,
+\int_0^te^{(-4\pi^2n^2\dot{\phi}^{\eta}(\bar\rho)+i2\pi n\dot{\nu}^{\eta}(\bar\rho))(t-s)}\int_{\T^d}\nabla e_n(x)\big((\sigma^\eta(\rho^{\epsilon,\eta})-\sigma^\eta(\Bar{\rho}))\,\de\xi^{\epsilon} + \sigma^\eta(\Bar{\rho})\,\de(\xi^\epsilon-\xi)\big).
\end{split}
\end{align}}

We now estimate each term on the right hand side of \eqref{theorem/clt for fluctuations 1/proof 5} separately.
We first consider the case $\tau=2$.
For the first term we compute, for a constant $C=C(T,p,\phi,\sigma,\nu,\bar\rho)$,
{\small
\begin{align}\label{theorem/clt for fluctuations 1/proof 6}
\begin{split}
&E\left[\int_0^T\left|
\int_0^te^{(-4\pi^2n^2\dot{\phi}^{\eta}(\bar\rho)+i2\pi n\dot{\nu}^{\eta}(\bar\rho))(t-s)}\,\epsilon^{\nicefrac{1}{2}}\left(
\int_{\T^d}\!\!\Delta e_n \int_0^1\!\!\int_0^1\big(\ddot{\phi}^{\eta}+\ddot{\nu}^{\eta}\big)\lambda\,d\lambda d\mu\,(v^{\epsilon,\eta}(x,s))^2dx\right)\de s\right|^2\!\!dt\right]
\\
&\leq
\epsilon\, \E\left[\int_0^T\!\!\left(
\int_0^te^{(-4\pi^2n^2\dot{\phi}^{\eta}(\bar\rho)+i2\pi n\dot{\nu}^{\eta}(\bar\rho))(t-s)}ds\right)\right.
\\
&\qquad\quad\left.\cdot\left(\int_0^te^{(-4\pi^2n^2\dot{\phi}^{\eta}(\bar\rho)+i2\pi n\dot{\nu}^{\eta}(\bar\rho))(t-s)}\left(
\int_{\T^d}\!\!\Delta e_n \int_0^1\int_0^1\big(\ddot{\phi}^{\eta}+\ddot{\nu}^{\eta}\big)\lambda\,d\lambda d\mu\,(v^{\epsilon,\eta}(x,s))^2dx\right)^2\!\!ds\right)\!dt\right]
\\
&\leq
C\,\frac{\epsilon}{n^2}\E\left[\int_0^T\left(
\int_s^Te^{(-4\pi^2n^2\dot{\phi}^{\eta}(\bar\rho)+i2\pi n\dot{\nu}^{\eta}(\bar\rho))(t-s)}dt\right)\right.
\\
&\qquad\qquad\qquad\quad\cdot\left.\left(
\int_{\T^d}\!\!\Delta e_n \int_0^1\int_0^1\big(\ddot{\phi}^{\eta}+\ddot{\nu}^{\eta}\big)\lambda\,d\lambda d\mu\,(v^{\epsilon,\eta}(x,s))^2dx\right)^2\de s\right]
\\
&\leq
C\,\frac{\epsilon}{n^4}\E\left[\int_0^T
\int_{\T^d}\!\!|\Delta e_n|^2 \int_0^1\int_0^1\big(\ddot{\phi}^{\eta}(\bar\rho+\lambda\mu(\rho^{\epsilon,\eta}-\bar\rho))+\ddot{\nu}^{\eta}(\bar\rho+\lambda\mu(\rho^{\epsilon,\eta}-\bar\rho))\big)^2\lambda^2\,d\lambda d\mu\,(v^{\epsilon,\eta})^4\de x \de s\right]
\\
&\leq
C\,\epsilon\, \E\left[\int_0^T
\int_{\T^d}\!\!\left(1+|\bar\rho|+\epsilon^{\frac{1}{2}}|v^{\epsilon,\eta}|\right)^{2g}(v^{\epsilon,\eta})^4\de x \de s\right]
\\
&\leq
C\,\epsilon\, |F_3^\epsilon|_{\infty}^2\left(1+\epsilon|F_3^\epsilon|_{\infty}\right)^{g+\frac{d}{2}(p+m)}
\left(1+\E\left[\|\rho_0\|_{L^1(\T^d)}^{m+p-1}+\|\rho_0\|_{L^p(\T^d)}^p\right]\right).
\end{split}
\end{align}}
In the first passage we used H\"older's inequality and in the second passage we estimated the first time integral and used Fubini's theorem to swap remaining time integrals.
In the third passage we estimated again the time integral of the exponential and then applied H\"older's inequality several times.
The fourth passage follows from $\nabla e_n=i2\pi n e_n$ and Assumption \ref{assumption/assumption C2}(ii).
The last passage follows from H\"older's inequality, Proposition \ref{proposition/lp estimates} and $2g+4\leq \frac{p}{k+1}$ in Assumption \ref{assumption/assumption C2}(ii).

For the second term on the right hand side of \eqref{theorem/clt for fluctuations 1/proof 5} we compute, for a constant $C=C(T,p,\phi,\sigma,\nu,\bar\rho)$,
{\small
\begin{align}\label{theorem/clt for fluctuations 1/proof 7}
\begin{split}
\E&\left[\int_0^T\left(
\int_0^t\!\!e^{(-4\pi^2n^2\dot{\phi}^{\eta}(\bar\rho)+i2\pi n\dot{\nu}^{\eta}(\bar\rho))(t-s)}\right.\right.
\\
&\qquad\qquad\left.\left.\cdot\frac{\epsilon^{\nicefrac{1}{2}}}{2}\left(\int_{\T^d}\!\!\left(\Delta e_n F_1^\epsilon+\nabla e_n2 F_2^\epsilon\right)\int_0^{\rho^{\epsilon,\eta}}\!\!\!\!\!(\dot{\sigma}^\eta(z))^2\,dz\,
\!-\!
\nabla e_n F_2^\epsilon \dot{\sigma}^\eta(\rho^{\epsilon,\eta})\sigma^\eta(\rho^{\epsilon,\eta})\,dx\right)\de s\right)^2dt\right]
\\
&\leq
C\,\frac{\epsilon}{n^4}\E\left[\int_0^T
\int_{\T^d}\!\!\left(|\Delta e_n|+|\nabla e_n|\right)^2\left(|F_1^\epsilon|_\infty+|F_2^\epsilon|_\infty\right)^2
\left(\int_0^{\rho^{\epsilon,\eta}}\!\!\!\!\!(\dot{\sigma}^\eta(z))^2\,dz+|\dot{\sigma}^\eta(\rho^{\epsilon,\eta})\sigma^\eta(\rho^{\epsilon,\eta})|\right)^2dx\,ds\right]
\\
&\leq
C\,\epsilon\,\left(|F_1^\epsilon|_\infty+|F_2^\epsilon|_\infty\right)^2\E\left[\int_0^T
\int_{\T^d}\!\!1+|\rho^{\epsilon,\eta}|^{2-4\theta}+|\rho^{\epsilon,\eta}|^{4k+2}dx\,ds\right]
\\
&\leq
C\,\epsilon\,\left(|F_1^\epsilon|_\infty+|F_2^\epsilon|_\infty\right)^2\left(1+\epsilon|F_3^\epsilon|_{\infty}\right)^{\frac{d}{2}(p+m)}
\left(1+\E\left[\|\rho_0\|_{L^1(\T^d)}^{m+p-1}+\|\rho_0\|_{L^p(\T^d)}^p\right]\right).
\end{split}
\end{align}}
In the first passage we used H\"older's inequality and Fubini's theorem for the time integrals, and then estimated the integrals involving the exponential.
The second passage follows from $\nabla e_n=i2\pi n e_n$ and Assumption \ref{assumption/assumption C2}(i) on $\sigma$.
The last passage follows from H\"older's inequality, estimate \eqref{equation/estimate 1 for kinetic solutions}, and $\theta\in[0,\nicefrac{1}{2})$ and $4k+4\leq p$ in Assumption \ref{assumption/assumption C2}(i).

We finally consider the last term on the right hand side of \eqref{theorem/clt for fluctuations 1/proof 5}.
We first use It\^o isometry, then Fubini's theorem and finally estimate the time integral of the exponential and use Assumption \ref{assumption/assumption N2} to obtain, for a constant $C=C(T,\phi,\bar\rho)$,
{\small
\begin{align}\label{theorem/clt for fluctuations 1/proof 8}
\begin{split}
\E&\left[\int_0^T\left(
\int_0^te^{(-4\pi^2n^2\dot{\phi}^{\eta}(\bar\rho)+i2\pi n\dot{\nu}^{\eta}(\bar\rho))(t-s)}\int_{\T^d}\nabla e_n(x)\left((\sigma^\eta(\rho^{\epsilon,\eta})-\sigma^\eta(\Bar{\rho}))\,\de\xi^{\epsilon} + \sigma^\eta(\Bar{\rho})\,\de(\xi^\epsilon-\xi)\right)\right)^2dt\right]
\\
&=
\E\left[\int_0^T
\int_0^te^{(-8\pi^2n^2\dot{\phi}^{\eta}(\bar\rho)+i4\pi n\dot{\nu}^{\eta}(\bar\rho))(t-s)}\right.
\\
&\qquad\qquad\qquad
\left.\cdot\sum_{m}\left(\left|\int_{\T^d}\nabla e_n (\sigma^\eta(\rho^{\epsilon,\eta})-\sigma^\eta(\Bar{\rho})) f_m^\epsilon \,dx\,\right|^2 + \left|\int_{\T^d}\nabla e_n\sigma^\eta(\Bar{\rho})(f_m^\epsilon-f_m)\,dx\,\right|^2\right)ds\,dt\right]
\\
&=
\E\left[\int_0^T
\left(\int_s^Te^{(-8\pi^2n^2\dot{\phi}^{\eta}(\bar\rho)+i4\pi n\dot{\nu}^{\eta}(\bar\rho))(t-s)}\,\,4\pi^2n^2\,\,dt\right)
\right.
\\
&\qquad\qquad\qquad
\left.
\cdot\sum_{m}\left(\left|\int_{\T^d} e_n (\sigma^\eta(\rho^{\epsilon,\eta})-\sigma^\eta(\Bar{\rho})) f_m^\epsilon \,dx\,\right|^2 + \left|\int_{\T^d} e_n\sigma^\eta(\Bar{\rho})(f_m^\epsilon-f_m)\,dx\,\right|^2\right)ds\right]
\\
&\leq
C\,\E\left[\int_0^T
\int_{\T^d} |\sigma^\eta(\rho^{\epsilon,\eta})-\sigma^\eta(\Bar{\rho})|^2 dx + \sum_{m} \,\left|\int_{\T^d} e_n\sigma^\eta(\Bar{\rho})(f_m^\epsilon-f_m)\,dx\,\right|^2 ds\right].
\end{split}
\end{align}}
In turn, for the first term on the right hand side of \eqref{theorem/clt for fluctuations 1/proof 8}, for a constant $C=C(T,p,\phi,\sigma,\nu,\bar\rho)$,
{\small
\begin{align}\label{theorem/clt for fluctuations 1/proof 9}
\begin{split}
\E&\left[\int_0^T
\int_{\T^d} |\sigma^\eta(\rho^{\epsilon,\eta})-\sigma^\eta(\Bar{\rho})|^2 dx ds\right]
\\
&\leq
C
\E\left[\int_0^T
\int_{\T^d} |\rho^{\epsilon,\eta}-\bar\rho|^2dx\,ds\right]^{\nicefrac{1}{2}}
\E\left[ \int_0^T\int_{\T^d}\left(\int_{\bar\rho}^{\rho^{\epsilon,\eta}}(\dot{\sigma}^\eta(z))^2dz\right)^2dx ds\right]^{\nicefrac{1}{2}}
\\
&\leq
C\,\epsilon^{\nicefrac{1}{2}}
\E\left[\|v^{\epsilon,\eta}\|_{L^2_{t,x}}^2\right]^{\nicefrac{1}{2}}
\left(1+\E\left[ \int_0^T\int_{\T^d}|\rho^{\epsilon,\eta}|^{(2k+1)2}dx\,ds\right]\right)^{\nicefrac{1}{2}}
\\
&\leq
C\,\epsilon^{\nicefrac{1}{2}}|F_3^\epsilon|_{\infty}^{\nicefrac{1}{2}}\left(1+\epsilon|F_3^\epsilon|_\infty\right)^{\frac{d}{2}(p+m)}\left(1+\E\left[\|\rho_0\|_{L^1(\T^d)}^{m+p-1}+\|\rho_0\|_{L^p(\T^d)}^p\right]\right).
\end{split}
\end{align}}
The first passage follows from H\"older's inequality and the fundamental theorem of calculus.
In the second passage we used the definition of $v^{\epsilon,\eta}$, Assumption \ref{assumption/assumption C2}(i) on $\dot{\sigma}$ and Young's inequality.
In the last passage we used Proposition \ref{proposition/lp estimates}, H\"older's inequality with $2(2k+1)\leq p$ from Assumption \ref{assumption/assumption C2} and the $L^p$-estimate \eqref{equation/estimate 1 for kinetic solutions}.

In conclusion, combining \eqref{theorem/clt for fluctuations 1/proof 5} with the estimates \eqref{theorem/clt for fluctuations 1/proof 6}, \eqref{theorem/clt for fluctuations 1/proof 7}, \eqref{theorem/clt for fluctuations 1/proof 8} and \eqref{theorem/clt for fluctuations 1/proof 9}, we obtain, for a constant $C=C(T,\bar\rho,p,\phi,\sigma,\nu)$,
{\small
\begin{align}\label{theorem/clt for fluctuations 1/proof 10}
\begin{split}
\E\left[\|{v}^{\epsilon,\eta}_n(t)-{v}^\eta_n(t)\|_{L^2([0,T];H^{-\beta}(\T^d))}^2\right]
&=
\E\left[\int_0^T\sum_{n\in\Z^d}\left|\hat{v}^{\epsilon,\eta}_n(t)-\hat{v}^\eta_n(t)\right|n^{-2\beta}dt\right]
\\
&\leq\,\,
C\sum_{n\in\Z^d}n^{-2\beta}  \sum_{m} \int_0^T\left|\int_{\T^d} e_n\,(f_m^\epsilon-f_m)\,dx\right|^2 ds
\\
&\,\,\,\,+
C \left(\sum_{n\in\Z^d}n^{-2\beta}\right)
\left(\epsilon\left(|F_1^\epsilon|_{\infty}+|F_3^\epsilon|_{\infty}\right)^2+\epsilon^{\nicefrac{1}{2}}|F_3^\epsilon|_{\infty}^{\nicefrac{1}{2}}\right)
\\
&\qquad\quad\cdot
\left(1+\epsilon\,|F_3^\epsilon|_\infty\right)^{g+\frac{d}{2}(p+m)}
\left(1+\E\left[\|\rho_0\|_{L^1(\T^d)}^{m+p-1}+\|\rho_0\|_{L^p(\T^d)}^p\right]\right). 
\end{split}
\end{align}}
We now let $\eta\to0$, keeping $\epsilon\in(0,1)$ fixed, and use Fatou's Lemma in \eqref{theorem/clt for fluctuations 1/proof 10} to obtain formula \eqref{equation/rate of convergence 2} in the case $\tau=2$.

The estimate for $\tau=\infty$ is obtained almost identically, by estimating {\small$\E\left[\sup_{t\in[0,T]}\left|\hat{v}^{\epsilon,\eta}_n(t)-\hat{v}^\eta_n(t)\right|n^{-2\beta}\right]$} for each Fourier mode via the expression \eqref{theorem/clt for fluctuations 1/proof 5} and computations completely analogous to \eqref{theorem/clt for fluctuations 1/proof 6}-\eqref{theorem/clt for fluctuations 1/proof 8}, where we simply replace $\int_0^Tdt$ by $\sup_{t\in[0,T]}$.
The only differences are that in the first passage of \eqref{theorem/clt for fluctuations 1/proof 8} we use Doob's maximal inequality for stochastic convolutions (cf. \cite{da_prato_zabczyk_1992}) instead of the It\^o isometry, and that in the respective computations \eqref{theorem/clt for fluctuations 1/proof 6}-\eqref{theorem/clt for fluctuations 1/proof 8} we pick up a factor $n^2$ on the right hand side of each estimate since this is no more compensated by the time integral of the exponential. This forces us to require $\beta>\frac{d}{2}+1$ in this case. Then we conclude identically to \eqref{theorem/clt for fluctuations 1/proof 9}-\eqref{theorem/clt for fluctuations 1/proof 10}.

Finally we argue that $v^\epsilon\to v$ along the scaling regime \eqref{equation/scaling regime for noise}.
Ideed, since $\beta>\frac{d}{2}$, or $\beta>\frac{d}{2}+1$ respectively, the first term on the right hand side of \eqref{equation/rate of convergence 2} vanishes along the prescribed scaling regime.
The second term vanishes thanks to Assumption \ref{assumption/assumption N2} on the noise sequence and dominated convergence.
\end{proof}

\subsection{The central limit theorem in probability for rougher coefficients}

In this subsection we extend the CLT to rougher coefficients satisfying Assumption \ref{assumption/assumption C1} and Assumption \ref{assumption/assumption C2 weak} only.

In the following, for $\eta\in(0,1)$, we consider smoothed coefficients $\phi^\eta,\nu^\eta,\sigma^\eta$ satisfying further assumptions, but obtained by smoothing the original coefficients $\phi,\nu,\sigma$ \emph{only near zero}.
Namely, a standard smoothing procedure yields the following lemma.

\begin{lemma}
\label{lemma/ smooth approximations only near zero}
Consider coefficients $\phi,\nu,\sigma$ satisfying Assumption \ref{assumption/assumption C1} and \ref{assumption/assumption C2 weak}.
There exists a sequence of approximating coefficients $\{\phi^\eta,\nu^\eta,\sigma^\eta\}_{\eta\in(0,1)}$ that satisfies Assumption \ref{assumption/assumption C1} and \ref{assumption/assumption C2 weak} uniformly in $\eta\in(0,1)$, and Assumption \ref{assumption/assumption C2} and Assumption (i)-(ii) in Theorem \ref{theorem/existence and uniqueness of weak solution to dean_kawasaki}, not necessarily uniformly in $\eta$, and such that, defining
\begin{equation}
    \label{equation/delta_eta}
        \delta_\eta:=\inf\{\delta\geq0\mid \phi^\eta(z)=\phi(z),\,\nu^\eta(z)=\nu(z),\,\sigma^\eta(z)=\sigma(z)\,\forall z\in[\delta,\infty)\},
    \end{equation}
we have that $\delta_\eta$ decreases to zero as $\eta$ decreases to zero.
That is $\lim_{\eta\to0}\delta_{\eta}=0$.
\end{lemma}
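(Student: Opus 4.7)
The plan is a direct cutoff-and-interpolate construction. Choose any sequence $\delta_\eta \to 0$ as $\eta \to 0$ (for instance, $\delta_\eta := \eta$), and set $\phi^\eta := \phi$, $\nu^\eta := \nu$, $\sigma^\eta := \sigma$ on $[\delta_\eta, \infty)$; on $[0, \delta_\eta]$ replace each coefficient by a smooth interpolant that matches the value and first derivative of the original at $z = \delta_\eta$, enforces $\phi^\eta(0) = \sigma^\eta(0) = 0$, satisfies $\dot\phi^\eta \geq c_\eta > 0$ pointwise, and has uniformly bounded derivatives on that interval. A concrete such interpolant is a monotone cubic polynomial in $z$ whose coefficients are determined by the boundary data at $0$ and $\delta_\eta$, followed, if necessary, by a mollification with a compactly supported cutoff (as in Lemma \ref{lemma/smooth approximations phi^eta sigma^eta}) to upgrade the regularity across the gluing point to $C^\infty$.

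The verification splits into two parts. For the uniform-in-$\eta$ claim that $\phi^\eta, \nu^\eta, \sigma^\eta$ satisfy Assumption \ref{assumption/assumption C1} and \ref{assumption/assumption C2 weak} uniformly, I would observe that these assumptions consist exclusively of asymptotic conditions at infinity (items (ii), (iv), (v), (vii) of \ref{assumption/assumption C1}), which are inherited verbatim since the modification is confined to $[0, \delta_\eta]$, and of conditions of the form ``for every $\delta > 0$ there is $c_\delta$ with \ldots on $(\delta, \infty)$'' (item (viii) of \ref{assumption/assumption C1}, all of \ref{assumption/assumption C2 weak}), which reduce to the corresponding condition on the original coefficients as soon as $\delta_\eta < \delta$; on the shrinking interval $[0, \delta_\eta]$ all relevant quantities are trivially bounded by the construction. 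Conditions (i) and (iii) of \ref{assumption/assumption C1} are immediate ($\phi^\eta(0) = \sigma^\eta(0) = 0$, $\dot\phi^\eta > 0$, and $\sigma^\eta(z)^2/z \to 0$ as $z \to 0^+$ since $\dot\sigma^\eta$ is bounded near zero). Condition (vi) requires a little care: since $\dot\phi^\eta \geq c_\eta$ on $[0, \delta_\eta]$ and $\dot\phi^\eta = \dot\phi$ on $[\delta_\eta, \infty)$, either form (vi-A) or (vi-B) transfers to $\phi^\eta$ with the same exponent and with a constant controlled uniformly in $\eta$ on compacta.

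The $\eta$-dependent claims---Assumption \ref{assumption/assumption C2} together with the additional hypotheses of Theorem \ref{theorem/existence and uniqueness of weak solution to dean_kawasaki}---follow from the strict positivity and boundedness enforced on $[0, \delta_\eta]$ combined with the behavior of the original coefficients on $[\delta_\eta, \infty)$: the polynomial bounds of \ref{assumption/assumption C2} now extend from $[\delta_\eta, \infty)$ to all of $(0, \infty)$ with an $\eta$-dependent constant, and hypothesis (i) of Theorem \ref{theorem/existence and uniqueness of weak solution to dean_kawasaki} follows likewise on any compact subset of $(0, \infty)$. The main obstacle I anticipate is hypothesis (ii) of Theorem \ref{theorem/existence and uniqueness of weak solution to dean_kawasaki}, which literally requires $\dot\sigma^\eta \in C_c^\infty([0, \infty))$: since the modification is confined to $[0, \delta_\eta]$, this cannot be achieved unless $\sigma$ is already eventually constant at infinity. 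I would handle this by reading (ii) as the sufficient smoothness needed to run the weak-existence argument---namely $\sigma^\eta \in C^\infty((0, \infty))$ with derivatives of polynomial growth, which our interpolant satisfies---rather than as a literal compact-support statement; an alternative is to apply a further truncation of $\sigma$ at a large threshold only at the intermediate existence step and then remove it via the continuity result of Proposition \ref{proposition/kinetic solutions of dean-kawasaki depend continuously on the coefficients}. Finally, $\lim_{\eta \to 0} \delta_\eta = 0$ is built into the construction, which closes the argument.
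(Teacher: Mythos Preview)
Your approach---modify only on $[0,\delta_\eta]$ and verify the assumptions---is exactly what the paper intends; its own proof is the single sentence ``a standard smoothing procedure yields the following lemma.'' Your verification of the uniform conditions is sound: the growth conditions at infinity in Assumption~\ref{assumption/assumption C1} are inherited verbatim, and the ``for every $\delta>0$'' conditions in Assumption~\ref{assumption/assumption C1}(viii) and~\ref{assumption/assumption C2 weak} reduce to the originals once $\delta_\eta<\delta$. You are also right to flag hypothesis~(ii) of Theorem~\ref{theorem/existence and uniqueness of weak solution to dean_kawasaki}: $\dot\sigma^\eta\in C_c^\infty([0,\infty))$ is literally incompatible with $\sigma^\eta=\sigma$ on $[\delta_\eta,\infty)$ unless $\sigma$ is eventually constant, and the paper does not resolve this either. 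Your proposed workaround---that downstream one only needs Assumption~\ref{assumption/assumption C2} so that Theorem~\ref{theorem/clt for fluctuations 1} applies, with the further smoothing of Lemma~\ref{lemma/smooth approximations phi^eta sigma^eta} invoked inside that proof---is the correct reading.

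One point you underplay: hypothesis~(i) of Theorem~\ref{theorem/existence and uniqueness of weak solution to dean_kawasaki}, namely $\inf_{z>0}\dot\phi^\eta>0$, presents the same obstruction at \emph{infinity}. If $\dot\phi(z)\to0$ as $z\to\infty$ (e.g.\ fast diffusion $\phi(z)=z^{m_0}$ with $m_0<1$, which the paper explicitly claims to cover), no modification confined to $[0,\delta_\eta]$ can make this global infimum positive; your phrase ``follows likewise on any compact subset of $(0,\infty)$'' is not the same as the hypothesis. This matters because Corollary~\ref{corollary/solutions are bounded by initial data with high probability} applies Proposition~\ref{proposition/moser iteration} to $\phi^\eta$, and that proposition uses $\inf\dot\phi^\eta>0$. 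The honest resolution is that the Moser argument only needs a lower bound on $\dot\phi^\eta$ over the range $[0,\ell]$ where $(\rho^{\epsilon,\eta}-\ell)_-$ is supported, and there one does have $\dot\phi^\eta\geq\min\{c_\eta,\inf_{[\delta_\eta,\ell]}\dot\phi\}>0$; but as literally stated the lemma overclaims in both~(i) and~(ii), and you should say so rather than force a proof.
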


Given the stochastic kinetic solution $\rho^{\epsilon,\eta}$ to equation \eqref{equation/dean--kawasaki with smoothed coefficients 2} with these smoothed coefficients$\phi^\eta$, $\nu^\eta$, $\sigma^\eta$, we denote
\begin{equation}
    \label{formula/ set omega epsilone eta}
    \Omega^{\epsilon,\eta}:=\Big\{\omega\in\Omega\,\big| \,\,\underset{\T^d\times[0,T]}{\text{ess-inf }}\rho^{\epsilon,\eta}>\delta_{\eta}\Big\}.
\end{equation}
Furthermore, given any initial data $\rho_0$ satisfying Assumption \ref{assumption/assumption I1}(ii), we denote
\begin{equation}
        \low:=\underset{\Omega\times\T^d}{\text{ess-inf }}\rho_0.
\end{equation}

The following two results establish stronger uniqueness both for the zero-noise limit and for the generalized Dean--Kawasaki equation. 
The first is an immediate application of the maximum principle and the uniqueness for the deterministic equation \eqref{equation/hydrodynamic limit equation 1}. 

\begin{lemma}
\label{lemma/hydrodinamic limit is unchanged by smoothing}
Let $\bar\rho$ and $\bar\rho^\eta$ be the solutions of equation \eqref{equation/hydrodynamic limit equation 1} and of its smoothed version $\partial_t\Bar{\rho}^\eta=\Delta\phi^{\eta}(\Bar{\rho}^\eta)-\nabla\!\cdot\!\nu^\eta(\bar{\rho}^\eta)$, both with initial data $\rho_0$ satisfying Assumption \ref{assumption/assumption I1}(ii), i.e. a random positive constant.
Let $\eta\in(0,1)$ be small enough so that $\delta_\eta<\low$, that is so that the smoothed coefficients match the true coefficients on $[\low,\infty)$.
Then we have $\bar\rho=\bar\rho^\eta$ for a.e. $(\omega,x,t)\in\Omega\times\T^d\times[0,\infty)$.
\end{lemma}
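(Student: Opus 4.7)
The plan exploits the key hypothesis that $\rho_0$ is a random positive constant bounded below by $\ell>0$. The cleanest route is to invoke the remark immediately following Theorem~\ref{theorem/well-posedness of the hydrodynamic limit}, which observes that for constant initial data the unique solution of \eqref{equation/hydrodynamic limit equation 1} is simply $\bar{\rho}(x,t,\omega)\equiv\rho_0(\omega)$. Applied to both the original equation and its smoothed version $\partial_t\bar\rho^\eta=\Delta\phi^\eta(\bar\rho^\eta)-\nabla\!\cdot\!\nu^\eta(\bar\rho^\eta)$, this immediately gives $\bar\rho\equiv\rho_0\equiv\bar\rho^\eta$, with no further work required; here one uses that $\phi^\eta,\nu^\eta$ satisfy Assumption \ref{assumption/assumption C1} by Lemma \ref{lemma/ smooth approximations only near zero}, so Theorem~\ref{theorem/well-posedness of the hydrodynamic limit} applies to both equations.

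A more conceptual argument, which is presumably what the hint to the ``maximum principle and uniqueness'' alludes to and which would survive the extension to nonconstant positive initial data, proceeds in three steps. First, I would apply the parabolic comparison principle to the smoothed equation to conclude that $\bar\rho^\eta(x,t,\omega)\geq \ell$ a.e.\ on $\T^d\times[0,T]\times\Omega$, using that $\bar\rho^\eta(\cdot,0)=\rho_0\geq\ell$ and that the constant $\ell$ is a stationary subsolution of the smoothed equation. Second, the hypothesis $\delta_\eta<\ell$ together with the definition \eqref{equation/delta_eta} of $\delta_\eta$ implies that $\phi^\eta=\phi$ and $\nu^\eta=\nu$ on $[\ell,\infty)$; composing with $\bar\rho^\eta$ and using the lower bound just established gives $\phi^\eta(\bar\rho^\eta)=\phi(\bar\rho^\eta)$ and $\nu^\eta(\bar\rho^\eta)=\nu(\bar\rho^\eta)$ a.e. Substituting into the smoothed PDE shows that $\bar\rho^\eta$ is also a weak solution of the original equation \eqref{equation/hydrodynamic limit equation 1} in the sense of Definition \ref{definition/hydrodynamic limit weak solution}, with the same initial data $\rho_0$. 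Third, by the uniqueness part of Theorem~\ref{theorem/well-posedness of the hydrodynamic limit}, $\bar\rho^\eta=\bar\rho$ a.e.

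There is no substantive obstacle. The only minor technicality is verifying the comparison principle under Assumption \ref{assumption/assumption C1} together with the uniform bounds afforded by Lemma \ref{lemma/ smooth approximations only near zero}; since $\dot\phi^\eta$ is bounded below away from zero on $[\ell,\infty)$ and all coefficients are $C^{1,1}_{\text{loc}}$, this is standard. Either route yields the conclusion, and since the conservation-law structure is preserved under the pathwise argument, the equality holds for a.e.\ $(\omega,x,t)\in\Omega\times\T^d\times[0,\infty)$ as stated.
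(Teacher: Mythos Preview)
Your proposal is correct and essentially matches the paper's approach: the paper simply notes that the lemma is ``an immediate application of the maximum principle and the uniqueness for the deterministic equation \eqref{equation/hydrodynamic limit equation 1},'' which is exactly your second route. Your first route---observing that for constant initial data both solutions are identically $\rho_0$---is an even shorter shortcut that the paper does not spell out but which is fully justified by the remark following Theorem~\ref{theorem/well-posedness of the hydrodynamic limit}.
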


A straightforward adaptation of the uniqueness proof in \cite[Theorem 4.7]{fehrman-gess-Well-posedness-of-the-Dean-Kawasaki-and-the-nonlinear-Dawson-Watanabe-equation-with-correlated-noise}, which just amounts to restricting all the arguments to a smaller probability subset $\Omega_0\subseteq\Omega$, establishes the following.

\begin{lemma}[Enhanced pathwise uniqueness]
\label{proposition/enhanced pathwise uniqueness}
For $i=1,2$, let $\rho^i$ be the stochastic kinetic solution to the equation
\begin{equation}
    \label{formula/ generalized dean kawasaki equation couple}
    \partial_t\rho^i=\Delta\phi_i(\rho^i)-\nabla\cdot\nu_i(\rho^i)-\sqrt{\epsilon}\nabla\cdot\left(\sigma_i(\rho^i)\circ \dot{\xi}^{\epsilon}\right)\,\,\,\text{in } \mathbb{T}^d\times(0,T),\quad \rho^i(\cdot,0)=\rho_0^i\,\,\,\text{in } \mathbb{T}^d\times\{0\},
\end{equation}
with noise $\xi^{\epsilon}$ satisfying Assumption \ref{assumption/assumption N1}, coefficients $\phi_i$, $\nu_i$ and $\sigma_i$ satisfying Assumption \ref{assumption/assumption C1}, and initial data $\rho^i_0$ satisfying Assumption \ref{assumption/assumption I1}.
Let $\Omega_0\subseteq \Omega$ be a measurable subset such that
\begin{itemize}
    \item[i)] $\rho_0^1(x,\omega)=\rho_0^2(x,\omega)$ for every $x\in \T^d$, for a.e. $\omega\in\Omega_0$;
    
    \item[ii)] for some $\delta>0$, for a.e. $\omega\in\Omega_0$, 
        {\small
        \begin{equation}
            \phi_1(z)=\phi_2(z),\,\,\nu_1(z)=\nu_2(z),\,\,\sigma_1(z)=\sigma_2(z)\,\,\,\,\forall z\in\bigg(\essinf_{\T^d\times[0,T]\times\Omega_0}\!\!\rho^1-\delta,\,\,\underset{\T^d\times[0,T]\times\Omega_0}{\esssup}\!\!\rho^1+\delta\bigg).
        \end{equation}}
\end{itemize}
Then we have that
\begin{equation}\label{formula/ enhanced pathwise uniqueness}
    \sup_{t\in[0,T]}\|\rho^1(\cdot,t)-\rho^2(\cdot,t)\|_{L^1(\T^d)}\leq \|\rho_0^1-\rho_0^2\|_{L^1(\T^d)}\quad\text{almost surely on $\Omega_0$.}
\end{equation}
\end{lemma}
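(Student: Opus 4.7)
The plan is to adapt the standard Kruzhkov-type $L^1$ contraction proof for stochastic kinetic solutions carried out in \cite[Theorem 4.7]{fehrman-gess-Well-posedness-of-the-Dean-Kawasaki-and-the-nonlinear-Dawson-Watanabe-equation-with-correlated-noise} by localizing every step to $\Omega_0$ and exploiting the agreement of the coefficients on the effective range of $\rho^1$ to kill the mismatch terms. First I would introduce the kinetic functions $\chi^i(z,x,t)=\bar\chi(z,\rho^i(x,t))$ and their regularizations $\chi^{i,\kappa}$ obtained by convolving in $x$ with a standard mollifier and in the velocity $z$ with a cutoff that localizes $z$ to a compact subset of $(0,\infty)$. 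Applying It\^o's formula to
\[
t\mapsto\int_{\T^d}\!\!\int_\R\chi^{1,\kappa}(1-\chi^{2,\kappa})+\chi^{2,\kappa}(1-\chi^{1,\kappa})\,dz\,dx,
\]
which measures $\|\rho^1-\rho^2\|_{L^1(\T^d)}$ in the limit $\kappa\to 0$ via $\int_\R|\chi^1-\chi^2|\,dz=|\rho^1-\rho^2|$, and then substituting the kinetic formulation (vi) for each $\chi^i$, yields---after integration by parts in $z$, use of the entropy inequality (iv), the vanishing-at-infinity property (v), and cancellation of the It\^o corrections of the doubled stochastic integrals---the $L^1$ contraction in the case when the coefficients are identical. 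This is the content of the proof in \cite{fehrman-gess-Well-posedness-of-the-Dean-Kawasaki-and-the-nonlinear-Dawson-Watanabe-equation-with-correlated-noise} and I would simply rerun it verbatim.

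Second, I would track where the two coefficient families enter and show that the resulting discrepancy vanishes a.s.\ on $\Omega_0$. Every coefficient appears either evaluated at $\rho^1$ (through terms like $\dot\phi_1(\rho^1)\nabla\rho^1$, $\sigma_1(\rho^1)\,d\xi^\epsilon$, $\dot\sigma_1(\rho^1)\sigma_1(\rho^1)F_2^\epsilon$, $\nu_1(\rho^1)$ in the kinetic equation for $\chi^1$) or at $\rho^2$ analogously in the equation for $\chi^2$; in the doubled formulation, $\phi_i$ is also evaluated at $z$ against $\partial_z\psi$, but thanks to the factor $\delta_0(z-\rho^i)$ implicit in the kinetic measure bound \eqref{2_500} this is again an evaluation at $\rho^i$. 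Now by hypotheses (i)--(ii), for a.e.\ $\omega\in\Omega_0$ the process $\rho^1(\cdot,\cdot,\omega)$ takes values in an interval of width at least $2\delta$ on which $\phi_1=\phi_2$, $\nu_1=\nu_2$, $\sigma_1=\sigma_2$, and hence $\dot\phi_1=\dot\phi_2$, $\dot\nu_1=\dot\nu_2$, $\dot\sigma_1=\dot\sigma_2$ in the interior of this interval. Therefore, restricted to $\Omega_0$, the kinetic formulation (vi) for $\rho^1$ with coefficients $(\phi_1,\nu_1,\sigma_1)$ is identical term-by-term to the one with $(\phi_2,\nu_2,\sigma_2)$, so that on $\Omega_0$ both $\chi^1$ and $\chi^2$ solve the same kinetic equation with the common coefficients $(\phi_2,\nu_2,\sigma_2)$.

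Third, I would obtain \eqref{formula/ enhanced pathwise uniqueness} by rerunning the doubling-of-variables computation with the common coefficients $(\phi_2,\nu_2,\sigma_2)$, but multiplying the resulting inequality by $\mathbf 1_{\Omega_0}$ throughout. The cross It\^o-isometry terms become $\int\!\!\int(\sigma_2(\rho^1)-\sigma_2(\rho^2))^2$ and are handled exactly as in the one-equation proof. Passing to the limit $\kappa\to 0$ using the entropy inequality to absorb the parabolic defect and the vanishing at infinity to justify removal of the velocity cutoff yields, pathwise on $\Omega_0$,
\[
\sup_{t\in[0,T]}\|\rho^1(\cdot,t)-\rho^2(\cdot,t)\|_{L^1(\T^d)}\leq\|\rho^1_0-\rho^2_0\|_{L^1(\T^d)},
\]
which is the desired statement.

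The main obstacle is pure bookkeeping: confirming term-by-term in the long identity produced by It\^o's formula that every coefficient-dependent quantity either (a) involves only an evaluation of $\phi_i,\nu_i,\sigma_i$ at $\rho^i$ in the common range on $\Omega_0$, so the difference $\phi_1(\rho^1)-\phi_2(\rho^1)$ vanishes there, or (b) reduces to a form handled by the original Fehrman--Gess proof with the single coefficient $(\phi_2,\nu_2,\sigma_2)$. A secondary subtlety is that $\mathbf 1_{\Omega_0}$ is in general only $\F_T$-measurable, but since it enters solely as a multiplicative factor applied to an a.s.\ pathwise inequality produced by the one-equation argument---rather than inside any stochastic integral---no adaptivity obstruction arises.
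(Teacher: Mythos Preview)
Your proposal is correct and follows exactly the approach the paper indicates: the paper states the lemma as ``a straightforward adaptation of the uniqueness proof in \cite[Theorem~4.7]{fehrman-gess-Well-posedness-of-the-Dean-Kawasaki-and-the-nonlinear-Dawson-Watanabe-equation-with-correlated-noise}, which just amounts to restricting all the arguments to a smaller probability subset $\Omega_0\subseteq\Omega$,'' and you have spelled out precisely that adaptation---rerunning the kinetic doubling-of-variables contraction, observing that on $\Omega_0$ the equation for $\rho^1$ can be rewritten with the coefficients $(\phi_2,\nu_2,\sigma_2)$ since $\rho^1$ stays in the agreement range, and then applying the one-equation uniqueness estimate pathwise on $\Omega_0$. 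Your handling of the $\F_T$-measurability of $\mathbf{1}_{\Omega_0}$ is also correct.
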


The two previous Lemmas, the convergence \eqref{equation/small noise limit} from \cite[Theorem 6.6]{fehrman-gess-large-deviations-for-conservative} and Theorem \ref{theorem/clt for fluctuations 1} from the previous section immediately yield the following.

\begin{corollary}
\label{corollary/clt for the smoothed solutions}
Let $\rho_0$, $\left(\xi^\epsilon\right)_{\epsilon>0}$ and $\phi,\nu,\sigma$ satisfy Assumption \ref{assumption/assumption I1}(ii), Assumption \ref{assumption/assumption N2}, and Assumption \ref{assumption/assumption C1} and \ref{assumption/assumption C2 weak} respectively.
Consider the smoothed coefficients from Lemma \ref{lemma/ smooth approximations only near zero} and let $\eta\in(0,1)$ be small enough so that $\delta_\eta<\low$.
Let $\rho^{\epsilon,\eta}$ be the stochastic kinetic solution to the smoothed equation \eqref{equation/dean--kawasaki with smoothed coefficients 2}.
We have, keeping $\eta$ fixed,
\begin{equation}
    \lim_{\epsilon\to0}\|\rho^{\epsilon,\eta}-\bar{\rho}\|_{L^1([0,T];L^1(\T^d))}=0\,\,\,\text{in probability}.
\end{equation}
Furthermore, keeping $\eta$ fixed and letting $\epsilon\to0$ along the scaling regime \eqref{equation/scaling regime for noise}, we have
\begin{equation}
   v^{\epsilon,\eta}=\epsilon^{-\nicefrac{1}{2}}(\rho^{\epsilon,\eta}-\Bar{\rho})\to v \quad \text{in } L^2(\Omega;L^\tau_{\text{loc}}([0,\infty);H^{-\beta}(\T^d))),
\end{equation}
for $\tau=2$ or $\tau=\infty$, for any $\beta>\frac{d}{2}$ or $\beta>1+\frac{d}{2}$ respectively, with rate of convergence given in formula \eqref{equation/rate of convergence 2}.
Finally, for every $\epsilon\in(0,1)$ and every $\eta\in(0,1)$ small enough, we have
\begin{equation}
    \label{equation/solutions coincide on Omega^(epsilon,eta)}
    \rho^\epsilon=\rho^{\epsilon,\eta}\quad \text{and}\quad v^{\epsilon}=v^{\epsilon,\eta} \quad\text{ for a.e. $(x,t)\in\T^d\times[0,T]$, for every $\omega\in\Omega^{\epsilon,\eta}$}.
\end{equation}
\end{corollary}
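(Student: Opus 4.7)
The plan is to dispose of the three assertions in sequence by applying the results already established to the smoothed equation \eqref{equation/dean--kawasaki with smoothed coefficients 2}, and then identifying the resulting limiting objects with $\bar\rho$, $v$ and $\rho^\epsilon$ respectively. Each step will be a short verification; no genuine obstacle is expected, the only subtle point being the matching of limiting objects which relies on the smallness condition $\delta_\eta<\low$ together with the fact that $\bar\rho\equiv\rho_0$ is constant and stays well away from the smoothing region.

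For the first claim I will invoke \cite[Theorem 6.6]{fehrman-gess-large-deviations-for-conservative} directly with the smoothed coefficients, which satisfy Assumption \ref{assumption/assumption C1} uniformly in $\eta$ by Lemma \ref{lemma/ smooth approximations only near zero}. This yields convergence in probability of $\rho^{\epsilon,\eta}$ in $L^1([0,T];L^1(\T^d))$ to the unique solution $\bar\rho^\eta$ of the corresponding deterministic equation $\partial_t\bar\rho^\eta=\Delta\phi^\eta(\bar\rho^\eta)-\nabla\!\cdot\!\nu^\eta(\bar\rho^\eta)$ with initial datum $\rho_0$. For $\eta$ small enough that $\delta_\eta<\low$, Lemma \ref{lemma/hydrodinamic limit is unchanged by smoothing} identifies $\bar\rho^\eta$ with $\bar\rho\equiv\rho_0$, which closes the first step.

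For the second claim I will apply Theorem \ref{theorem/clt for fluctuations 1} to the smoothed equation: this is legitimate because, for each fixed $\eta\in(0,1)$, the coefficients $\phi^\eta,\nu^\eta,\sigma^\eta$ satisfy Assumption \ref{assumption/assumption C1} together with the stronger Assumption \ref{assumption/assumption C2}, by Lemma \ref{lemma/ smooth approximations only near zero}. The theorem produces $v^{\epsilon,\eta}\to v^\eta$ in $L^2(\Omega;L^\tau_{\text{loc}}([0,\infty);H^{-\beta}(\T^d)))$ along the scaling \eqref{equation/scaling regime for noise}, at the rate \eqref{equation/rate of convergence 2}, where $v^\eta$ solves the Langevin equation \eqref{equation/OU equation 1} with $\dot\phi,\dot\nu,\sigma$ replaced by $\dot\phi^\eta,\dot\nu^\eta,\sigma^\eta$ evaluated at $\bar\rho$. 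The crucial observation is that these coefficients are only evaluated at the constant $\bar\rho=\rho_0\geq\low>\delta_\eta$, on which $\phi^\eta=\phi$, $\nu^\eta=\nu$ and $\sigma^\eta=\sigma$ by definition \eqref{equation/delta_eta} of $\delta_\eta$; in particular their derivatives also coincide at $\bar\rho$. Hence the Langevin equation for $v^\eta$ coincides with that for $v$, and the well-posedness statement in Theorem \ref{theorem/ well-posedness of OU equation} gives $v^\eta=v$.

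For the third claim I will apply the enhanced pathwise uniqueness of Lemma \ref{proposition/enhanced pathwise uniqueness} with $\rho^1=\rho^{\epsilon,\eta}$, $\rho^2=\rho^\epsilon$, coefficients $(\phi^\eta,\nu^\eta,\sigma^\eta)$ and $(\phi,\nu,\sigma)$ respectively, the common initial datum $\rho_0$, and the subset $\Omega_0=\Omega^{\epsilon,\eta}$. By the very definition \eqref{formula/ set omega epsilone eta} of $\Omega^{\epsilon,\eta}$, the essential infimum of $\rho^{\epsilon,\eta}$ over $\T^d\times[0,T]\times\Omega^{\epsilon,\eta}$ strictly exceeds $\delta_\eta$; by \eqref{equation/delta_eta} the two families of coefficients agree on the resulting range of values, so hypothesis (ii) of Lemma \ref{proposition/enhanced pathwise uniqueness} holds for some $\delta>0$. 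The estimate \eqref{formula/ enhanced pathwise uniqueness} then collapses to the identity $\rho^{\epsilon,\eta}=\rho^\epsilon$ almost surely on $\Omega^{\epsilon,\eta}$, from which $v^\epsilon=v^{\epsilon,\eta}$ on the same set follows immediately from the definition of the fluctuations.
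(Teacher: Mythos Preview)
Your proposal is correct and follows exactly the approach the paper indicates: the paper states that the corollary follows immediately from Lemma~\ref{lemma/hydrodinamic limit is unchanged by smoothing}, Lemma~\ref{proposition/enhanced pathwise uniqueness}, the small noise limit \eqref{equation/small noise limit} from \cite[Theorem~6.6]{fehrman-gess-large-deviations-for-conservative}, and Theorem~\ref{theorem/clt for fluctuations 1}, and you have correctly fleshed out how each ingredient is applied. The one point worth a moment's care is in the third step: the hypothesis~(ii) of Lemma~\ref{proposition/enhanced pathwise uniqueness} as written asks the coefficients to agree on an interval around the essential range of $\rho^1$ over all of $\Omega_0$, whereas the definition of $\Omega^{\epsilon,\eta}$ only gives a strict bound $\omega$-by-$\omega$; this is harmless since one can either exhaust $\Omega^{\epsilon,\eta}$ by the sets $\{\omega:\operatorname{ess\,inf}_{\T^d\times[0,T]}\rho^{\epsilon,\eta}>\delta_\eta+1/n\}$ or observe that the underlying uniqueness argument is pathwise.
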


\medskip

We now present the main ingredient to establish the central limit theorem in probability.
Corollary \ref{corollary/clt for the smoothed solutions} implies that the sets $\Omega^{\epsilon,\eta}$ satisfy $\Omega^{\epsilon,\eta_1}\supseteq\Omega^{\epsilon,\eta_2}$ for every $\eta_1<\eta_2\in(0,1)$ and every $\epsilon\in(0,1)$. 
The following proposition essentially establishes that the measure of $\Omega^{\epsilon,\eta}\subseteq\Omega$ increases to $1$ as $\epsilon\to0$.

The proposition is adapted from \cite{dirr-fehrman-gess-conservative-stochastic-pde-and-fluctuations-of-the-symmetric} to the nonlinear diffusion case and we state it on its own, since the result is interesting per se and it holds for any stochastic kinetic solution of equation \eqref{equation/generalized dean--kawasaki equation 2} provided the coefficients satisfy Assumption \ref{assumption/assumption C1} and the diffusion is nondegenerate.

\begin{proposition}[Moser iteration argument]
\label{proposition/moser iteration}
Fix $\epsilon>0$.
Let $\rho_0$ satisfy Assumption \ref{assumption/assumption I1}, let $\xi^\epsilon$ satisfy Assumption \ref{assumption/assumption N1} and let $\phi,\nu,\sigma$ satisfy Assumption \ref{assumption/assumption C1}.
Furthermore, suppose that $\inf \dot{\phi}>0$.
Let $\rho^\epsilon$ be the stochastic kinetic solution to \eqref{equation/generalized dean--kawasaki equation 3} and let  $\low:=\underset{\Omega\times\T^d}{\text{ess-inf }}\rho_0$.
For every $T\geq0$, we have
\begin{align}\label{formula/ moser iteration bound}
\begin{aligned}
    \E\left[\norm{(\rho^\epsilon-\low)_-}_{L^\infty(\TT^d\times[0,T])}\right]\leq c^*
    \sum_{j=1}^{\infty}\frac{1}{j^2}
    \, 
    R_{\epsilon}^{\,\,\zeta\,(1+\nicefrac{2}{d})^{-j}},
\end{aligned}
\end{align}
where $c^*$ is a \emph{numeric} constant, and $R_{\epsilon}$ and the exponent $\zeta$ are given by
\begin{equation}
\label{formula/ reminder R epsilon}
    R_{\epsilon}:=C_0\,(\inf\dot{\phi})^{-2}\big(\epsilon^2\,\|F_1^{\epsilon}\|_{\infty}\|F_3^\epsilon\|_{\infty}+\epsilon\,\|F_3^\epsilon\|_{\infty}\big),\quad
    \zeta:=\begin{cases}\frac{d}{2}(1+\nicefrac{2}{d})^2\,\qquad\quad\,\,\,\,\text{if $R_\epsilon\geq1$,}\\
e^{-d(d+1)}(\nicefrac{1}{2}+\nicefrac{1}{d})\quad\text{if $R_\epsilon\in(0,1)$,}\end{cases}
\end{equation}
for a constant $C_0=C_0(T,d,\low,\phi,\sigma,\nu)$ depending on the coefficients $\phi,\nu,\sigma$ only through the constants $c$ featuring in Assumption \ref{assumption/assumption C1}.
\end{proposition}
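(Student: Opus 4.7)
The plan is to run a Moser iteration on the negative excursions $u^\epsilon:=(\low-\rho^\epsilon)_+$, exploiting the uniform ellipticity $\inf\dot\phi>0$ to dominate the It\^o--correction terms generated by the noise. First I would regularize: approximate $\phi,\nu,\sigma$ by smoothed coefficients $\phi^\eta,\nu^\eta,\sigma^\eta$ as in Lemma \ref{lemma/smooth approximations phi^eta sigma^eta}, so that the corresponding $\rho^{\epsilon,\eta}$ is a genuine weak solution of \eqref{equation/dean--kawasaki with smoothed coefficients 2} granted by Theorem \ref{theorem/existence and uniqueness of weak solution to dean_kawasaki}. This ensures that Krylov's version of It\^o's formula \cite{krylov_ito_formula} can be applied rigorously to $\int_{\T^d}G_p(\rho^{\epsilon,\eta})\,dx$ with $G_p(r):=(\low-r)_+^p$ for $p\geq 2$; since $\rho_0\geq\low$ by Assumption \ref{assumption/assumption I1}(ii), the initial datum contributes nothing. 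On the support of $u^{\epsilon,\eta}$ the solution is trapped in $[0,\low]$, so $\sigma^\eta,\dot\sigma^\eta,\dot\phi^\eta$ are bounded uniformly in $\eta$ by constants depending only on $\low$ and the structural constants of Assumption \ref{assumption/assumption C1}, with $\dot\phi^\eta\geq\inf\dot\phi$.

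The application of It\^o's formula produces three reductions on the deterministic side which I would verify in turn: (a) the convective term with $\nu^\eta$ becomes a total divergence on the torus and vanishes; (b) the It\^o--Stratonovich correction proportional to $F_1^\epsilon(\dot\sigma^\eta)^2|\nabla\rho|^2$ cancels exactly with the matching contribution $F_1^\epsilon|\nabla\sigma^\eta|^2$ from the quadratic variation; (c) the two cross terms involving $F_2^\epsilon$ combine, after integrating by parts against the primitive $\Psi(\rho):=\int_0^\rho g_p'(z)\sigma^\eta(z)\dot\sigma^\eta(z)\,dz$, into an integral of a divergence against $F_2^\epsilon$ and vanish thanks to $\nabla\cdot F_2^\epsilon=0$. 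What remains is the coercive dissipation together with the single forcing $\tfrac{\epsilon}{2}p(p-1)\int(u^{\epsilon,\eta})^{p-2}(\sigma^\eta)^2F_3^\epsilon$ and a martingale. Taking $\E\sup_{t\in[0,T]}$ and bounding the martingale by Burkholder--Davis--Gundy, whose bracket decomposes into contributions controlled by $\epsilon\|F_3^\epsilon\|_\infty$ from the $\nabla f_k^\epsilon$-pieces and by $\epsilon^2\|F_1^\epsilon\|_\infty\|F_3^\epsilon\|_\infty$ from the $f_k^\epsilon\dot\sigma^\eta\nabla\rho$-pieces after Young's inequality, one obtains, for an absolute constant $C>0$,
\begin{equation*}
\E\sup_{t\in[0,T]}\|u^{\epsilon,\eta}(t)\|_{L^p}^p+(\inf\dot\phi)\,p(p-1)\,\E\!\!\int_0^T\!\!\|\nabla(u^{\epsilon,\eta})^{p/2}\|_{L^2}^2\,dt\leq Cp^2R_\epsilon(\inf\dot\phi)\,\E\!\!\int_0^T\!\|u^{\epsilon,\eta}\|_{L^p}^p\,dt+\text{(forcing)},
\end{equation*}
the constant $(\inf\dot\phi)^{-2}$ in $R_\epsilon$ being used to absorb the BDG bracket and the bulk forcing.

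Then I would iterate. The parabolic Gagliardo--Nirenberg--Sobolev inequality
\begin{equation*}
\|v\|_{L^{2(1+2/d)}(\T^d\times[0,T])}^{2(1+2/d)}\leq C\bigl(\sup_{t\in[0,T]}\|v(t)\|_{L^2}^2\bigr)^{2/d}\!\int_0^T\!(\|v\|_{L^2}^2+\|\nabla v\|_{L^2}^2)\,dt,
\end{equation*}
applied to $v=(u^{\epsilon,\eta})^{p/2}$ and combined with the energy inequality, produces the recursion $\E\|u^{\epsilon,\eta}\|_{L^{p\theta}(Q_T)}^{p\theta}\leq (Cp^2R_\epsilon)^\theta\E\|u^{\epsilon,\eta}\|_{L^p(Q_T)}^{p\theta}$, with $\theta:=1+\tfrac{2}{d}$ and $Q_T:=\T^d\times[0,T]$. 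Setting $p_j:=2\theta^j$, taking logarithms and iterating gathers the successive prefactors $(Cp_j^2R_\epsilon)^\theta$, with exponents $\theta^{-j}$ cascading onto $R_\epsilon$ while the polynomial factors $p_j^2=4\theta^{2j}$ contribute a $j^{-2}$-summable tail (after the Cauchy condensation redistribution). The split into the two regimes $R_\epsilon\geq 1$ and $R_\epsilon<1$ arises because the base $L^2$ estimate of the iteration must either absorb a dominating noise contribution or can be started in the small-noise regime, explaining the two different values of $\zeta$ in \eqref{formula/ reminder R epsilon}. Finally, Proposition \ref{proposition/kinetic solutions of dean-kawasaki depend continuously on the coefficients} yields $\rho^{\epsilon,\eta}\to\rho^\epsilon$ in $L^1$ in probability, so along a subsequence almost everywhere, and Fatou's lemma transfers the estimate to $(\rho^\epsilon-\low)_-$.

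The main obstacle will be the delicate bookkeeping of constants through the iteration so as to reproduce the precise exponent $\zeta$ in both regimes of \eqref{formula/ reminder R epsilon}: one must simultaneously track the polynomial growth of the $p_j^2$-prefactors, the geometric decay $\theta^{-j}$ of the $R_\epsilon$-exponents, and the propagation of the forcing at each level. In particular, the exact cancellations (b) and (c) in the It\^o formula are essential, since otherwise one would pick up either $\|F_2^\epsilon\|_\infty$ or an uncancelled $\epsilon\|F_1^\epsilon\|_\infty\|F_3^\epsilon\|_\infty^0$ factor that would break the scaling compatibility with \eqref{equation/scaling regime for noise} and Theorem \ref{theorem/clt for fluctuations 1}.
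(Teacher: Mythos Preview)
Your overall strategy matches the paper's: regularise via Lemma~\ref{lemma/smooth approximations phi^eta sigma^eta}, apply Krylov's It\^o formula to $(\low-\rho^{\epsilon,\eta})_+^{\alpha+1}$, exploit exactly the cancellations (a)--(c) you list, use the parabolic Gagliardo--Nirenberg inequality to set up a Moser iteration with ratio $\theta=1+\nicefrac{2}{d}$, and pass to the limit $\eta\to0$ by Fatou. However, two genuine ingredients are missing and without them the argument does not close.

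First, the BDG step. After integration by parts the bracket is controlled by $\epsilon\|F_1^\epsilon\|_\infty\big(\sup_t\!\int (u^{\epsilon,\eta})^{\alpha-1}\big)\big(\int_0^T\!\!\int|\nabla(u^{\epsilon,\eta})^{(\alpha+1)/2}|^2\big)$, so after Cauchy--Schwarz you are left with a factor $\big(\sup_t\int (u^{\epsilon,\eta})^{\alpha-1}\big)^{1/2}$ whose exponent is two short of what can be absorbed into the left-hand side. Your decomposition of the bracket into ``$\nabla f_k$--pieces'' and ``$f_k\dot\sigma\nabla\rho$--pieces'' does not avoid this mismatch. The paper introduces a free parameter $\delta_\alpha\in(0,\low)$ and splits $\{u\le\delta_\alpha\}\cup\{u>\delta_\alpha\}$: on the first set $u^{\alpha-1}\le\delta_\alpha^{\alpha-1}$, on the second $u^{\alpha-1}\le\delta_\alpha^{-2}u^{\alpha+1}$, so one half is absorbed and the other becomes an inhomogeneous forcing $\delta_\alpha^{\alpha-1}$ at each level. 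A stopping-time lemma from \cite{revuz_yor_book} is then used to pass from $\E[\,\cdot\,]$ to $\E[(\,\cdot\,)^{1/(\alpha+1)}]$ before iterating, which is what makes the recursion close at the level of first moments of $L^{\alpha_k}$-norms rather than of their $(\alpha_k)$-th powers.

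Second, and directly connected, the $j^{-2}$ weights in \eqref{formula/ moser iteration bound} do not arise from the polynomial prefactors $p_j^2$ via any ``Cauchy condensation''; those prefactors only contribute a finite multiplicative constant after taking logarithms, since $\sum_j\theta^{-j}\log\theta^{2j}<\infty$. The summable weights come from the choice $\delta_j=j^{-2/\gamma}$, where $\gamma=\prod_{i\ge2}\frac{\beta_i-1}{\beta_i+1}>0$: the inhomogeneous term at step $j$ carries $\delta_j$ to the power $\prod_{i\ge j}\frac{\beta_i-1}{\beta_i+1}\ge\gamma$, giving $\delta_j^{\gamma}=j^{-2}$. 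It is this $\delta_\alpha$-mechanism, not the coefficient bookkeeping, that produces the structure $\sum_j j^{-2}R_\epsilon^{\zeta(1+2/d)^{-j}}$ of the statement.
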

\begin{proof}
For $\eta\in(0,1)$ consider smoothed coefficients $\phi^\eta,\nu^\eta,\sigma^\eta$ obtained from the original coefficients $\phi,\nu,\sigma$ thanks to Lemma \ref{lemma/smooth approximations phi^eta sigma^eta}.
They satisfy Assumption \ref{assumption/assumption C1} and also Assumption (i) in Theorem \ref{theorem/existence and uniqueness of weak solution to dean_kawasaki}, since $\phi$ is nondegenerate, uniformly in $\eta$.
In particular we require $\inf\dot{\phi}^\eta\geq\inf\dot\phi>0$.
Let $\roeta$ be the weak solution to equation \eqref{equation/dean--kawasaki with smoothed coefficients 2} with coefficients $\phi^\eta,\nu^\eta,\sigma^\eta$ and the same noise $\xi^\epsilon$ and initial data $\rho_0$. Finally let $\low:=\underset{\Omega\times\T^d}{\text{ess-inf }}\rho_0$.

For any $\alpha\in[1,\infty)$, consider the function $f(z):=(z-\low)^{\alpha+1}_-=(0\vee(\low-z))^{\alpha+1}$.
Applying It\^o's formula in the form proved in Krylov \cite{krylov_ito_formula} yields
\begin{align}\label{proof/ moser 1}
    \begin{aligned}
        \int_{\T^d}(\rho^{\epsilon,\eta}_r-\low)^{\alpha+1}_- dx\Big|_{r=0}^{r=t}
        =
        &-\int_0^t\int_{\T^d}\alpha(\alpha+1)(\rho^{\epsilon,\eta}_r-\low)^{\alpha-1}_-\dot{\phi}^\eta(\rho^{\epsilon,\eta})|\nabla\rho_r^{\epsilon,\eta}|^2\,dx\,dr
        \\
        &-\int_0^t\int_{\T^d}(\alpha+1)(\rho^{\epsilon,\eta}_r-\low)^{\alpha}_-\nabla\cdot\left(\nu^\eta(\rho^{\epsilon,\eta})\right)\,dx\,dr
        \\
        &-\epsilon^{\nicefrac{1}{2}}\int_0^t\int_{\T^d}(\alpha+1)(\rho^{\epsilon,\eta}_r-\low)^{\alpha}_-\,\nabla\cdot\left(\sigma^\eta(\rho^{\epsilon,\eta})\,\de \xi^\epsilon\right)
        \\
        &+\frac{\epsilon}{2}\int_0^t\int_{\T^d}\!\!\!\!\alpha(\alpha+1)(\rho^{\epsilon,\eta}_r-\low)^{\alpha-1}_-
        \sigma^\eta(\rho^{\epsilon,\eta})\nabla\sigma^\eta(\rho^{\epsilon,\eta}) F_2^\epsilon\,dx\,dr
        \\
        &+\frac{\epsilon}{2}\int_0^t\int_{\T^d}\!\!\!\!\alpha(\alpha+1)(\rho^{\epsilon,\eta}_r-\low)^{\alpha-1}_-
        \left(\sigma^\eta(\rho^{\epsilon,\eta})\right)^2F_3^\epsilon \,dx\,dr.
    \end{aligned}
\end{align}
We note that that the integral at time $r=0$ on the left hand side of \eqref{proof/ moser 1} vanishes because $(\rho_0-\low)_-\equiv0$ by definition of $\low$.
Furthermore, arguing as in \eqref{proposition/lp estimates/ proof 3}-\eqref{proposition/lp estimates/proof 4}, using integration by parts and the periodic boundary conditions or the assumption $\nabla\cdot F_2^\epsilon=0$, shows that the second and the fourth term on the right hand side of \eqref{proof/ moser 1} are identically zero.
We integrate by parts the third term on the right hand side of \eqref{proof/ moser 1}, then we use the identity
\[\nabla\,(\rho^{\epsilon,\eta})^{\frac{\alpha+1}{2}}={\small\frac{\alpha+1}{2}}\,(\rho^{\epsilon,\eta})^{\frac{\alpha-1}{2}}\,\nabla\rho^{\epsilon,\eta}\]
to rewrite the first and the third term on the right hand side, and finally we rearrange the terms to obtain
\begin{align}\label{proof/ moser 5}
    \begin{aligned}
        \int_{\T^d}(\rho^{\epsilon,\eta}_t&-\low)^{\alpha+1}_- dx
        +
        \int_0^t\int_{\T^d}{\small\frac{4\alpha}{\alpha+1}}\dot{\phi}^\eta(\rho^{\epsilon,\eta}_r)\left|\nabla(\rho_r^{\epsilon,\eta}-\low)_-^{\frac{\alpha+1}{2}}\right|^2\,dx\,dr
        \\
        =
        &
        \,\,\,\epsilon^{\nicefrac{1}{2}}\int_0^t\int_{\T^d}2\alpha\,(\rho^{\epsilon,\eta}_r-\low)^{\frac{\alpha-1}{2}}_-\,\nabla(\rho_r^{\epsilon,\eta}-\low)^{\frac{\alpha+1}{2}}_-\sigma^\eta(\rho^{\epsilon,\eta})\,\de \xi^\epsilon
        \\
        &+\frac{\epsilon}{2}\int_0^t\int_{\T^d}\!\!\!\!\alpha(\alpha+1)(\rho^{\epsilon,\eta}_r-\low)^{\alpha-1}_-
        \left(\sigma^\eta(\rho^{\epsilon,\eta})\right)^2F_3^\epsilon \,dx\,dr.
    \end{aligned}
\end{align}

Since $\rho^{\epsilon,\eta}\geq0$ always, we have $|(\rho^{\epsilon,\eta}-\low)_-|_{\infty}\leq \low$.
Moreover, the assumption $\sigma\in C_{\text{loc}}([0,\infty))$ implies that there exists $C=C(\low,\sigma)$ independent of $\eta\in(0,1)$ such that
\begin{equation}
\label{proof/ moser 6}
    |\sigma^{\eta}(\rho^{\epsilon,\eta})\,\mathbf{1}_{\{(\rho^{\epsilon,\eta}-\low)_-\neq0\}}|\leq C.
\end{equation}
Hence, for the second term on the right hand side of \eqref{proof/ moser 5}, we have, for a constant $C=C(\low,\sigma)$ independent of $\eta,\epsilon\in(0,1)$ and of $\alpha\geq1$, almost surely for every $t\in[0,T]$,
\begin{align}\label{proof/ moser 7}
    \begin{aligned}
        \frac{\epsilon}{2}\int_0^t\int_{\T^d}\!\!\!\!\alpha(\alpha+1)(\rho^{\epsilon,\eta}_r-\low)^{\alpha-1}_-
        \left(\sigma^\eta(\rho^{\epsilon,\eta})\right)^2F_3^\epsilon \,dx\,dr
        \leq
        C\,\alpha^2\,\epsilon\,\|F_3^\epsilon\|_{\infty}\int_0^t\int_{\T^d}(\rho^{\epsilon,\eta}_r-\low)^{\alpha-1}_-\,dx\,dr.
    \end{aligned}
\end{align}
Given an arbitrary bounded stopping time $\tau\leq T$, we now evaluate \eqref{proof/ moser 5} at $t=\tau$.
After taking the expectation, so that the martingale term vanishes, using estimate \eqref{proof/ moser 7} and multiplying both sides by $(\inf\dot\phi)^{-1}$, we obtain, for a constant $C=C(\low,\sigma)$ independent of $\eta,\epsilon\in(0,1)$ and $\alpha\geq1$,
\begin{align}\label{proof/ moser 9}
    \begin{aligned}
        \E\bigg[
        \int_0^{\tau}\int_{\T^d}\left|\nabla(\rho_r^{\epsilon,\eta}-\low)^{\frac{\alpha+1}{2}}\right|^2\,dx\,dr
        \bigg]
        \leq
        C\,\alpha^2\,(\inf\dot\phi)^{-1}\,\epsilon\,\|F_3^\epsilon\|_{\infty}\E\bigg[\int_0^{\tau}\int_{\T^d}(\rho^{\epsilon,\eta}_r-\low)^{\alpha-1}_-\,dx\,dr\bigg].
    \end{aligned}
\end{align}

For the martingale term in \eqref{proof/ moser 5}, given an arbitrary bounded stopping time $\tau\leq T$, we compute, for a constant $C=C(\low,T,\sigma)$ independent of $\eta,\epsilon\in(0,1)$ and $\alpha\geq1$, for any $\delta_{\alpha}\in(0,\low)$,
{\small
\begin{align}\label{proof/ moser 10}
    \begin{aligned}
        \E\bigg[&\sup_{t\in[0,\tau]}\bigg|\epsilon^{\nicefrac{1}{2}}\int_0^t\int_{\T^d}2\alpha\,(\rho^{\epsilon,\eta}_r-\low)^{\frac{\alpha-1}{2}}_-\,\nabla(\rho_r^{\epsilon,\eta}-\low)^{\frac{\alpha+1}{2}}_-\sigma^\eta(\rho^{\epsilon,\eta})\,\de \xi^\epsilon\bigg|\bigg]
        \\
        \leq&\,
        C\epsilon^{\nicefrac{1}{2}}\alpha\,
        \E\left[\bigg(\int_0^{\tau}\scalebox{0.85}{$\displaystyle\sum_{k=1}^\infty$}\Big(\int_{\T^d}2(\rho^{\epsilon,\eta}_r-\low)^{\frac{\alpha-1}{2}}_-\,\nabla(\rho_r^{\epsilon,\eta}-\low)^{\frac{\alpha+1}{2}}_-\sigma^\eta(\rho^{\epsilon,\eta})\,f_k(x)\,dx\Big)^2\,dr\bigg)^{\nicefrac{1}{2}}\right]
        \\
        \leq&\,
        C\epsilon^{\nicefrac{1}{2}}\alpha\,
        \E\left[\bigg(\int_0^{\tau}\Big(\int_{\T^d}(\rho^{\epsilon,\eta}_r-\low)^{\alpha-1}_-\sigma^\eta(\rho^{\epsilon,\eta})^2F_1^{\epsilon}\,dx\Big)\,\Big(\int_{\T^d}\Big|\nabla(\rho_r^{\epsilon,\eta}-\low)^{\frac{\alpha+1}{2}}_-\Big|^2dx\Big)\,dr\bigg)^{\nicefrac{1}{2}}\right]
        \\
        \leq&\,
        C\alpha\,\epsilon^{\nicefrac{1}{2}}\|F_1^{\epsilon}\|_{\infty}^{\nicefrac{1}{2}}
        \E\Bigg[
        \sup_{t\in[0,\tau]}\bigg(\int_{\T^d}(\rho^{\epsilon,\eta}_t-\low)^{\alpha-1}_-\mathbf{1}_{\{\rho^{\epsilon,\eta}>\low-\delta_{\alpha}\}}dx
        +
        \int_{\T^d}(\rho^{\epsilon,\eta}_t-\low)^{\alpha-1}_-\mathbf{1}_{\{\rho^{\epsilon,\eta}\leq\low-\delta_{\alpha}\}}dx\bigg)^{\nicefrac{1}{2}}
        \\
        &\qquad\qquad\qquad\qquad\qquad\qquad\qquad\qquad\qquad\qquad\qquad\qquad\cdot
        \bigg(\int_0^{\tau}\int_{\T^d}\Big|\nabla(\rho_r^{\epsilon,\eta}-\low)^{\frac{\alpha+1}{2}}_-\Big|^2dx\,dr\bigg)^{\nicefrac{1}{2}}\Bigg]
        \\
        \leq&\,
        (C\alpha^2\,\epsilon\|F_1^{\epsilon}\|_{\infty})^{\nicefrac{1}{2}}
        \bigg(\delta_{\alpha}^{\alpha-1}+\delta_{\alpha}^{-2}\,\E\Big[
        \sup_{t\in[0,\tau]}\int_{\T^d}(\rho^{\epsilon,\eta}_t-\low)^{\alpha+1}_-dx\Big]\bigg)^{\nicefrac{1}{2}}
        \E\bigg[\int_0^{\tau}\int_{\T^d}\Big|\nabla(\rho_r^{\epsilon,\eta}-\low)^{\frac{\alpha+1}{2}}_-\Big|^2dx\,dr\bigg]^{\nicefrac{1}{2}}
        \\
        \leq&\,
        (C\alpha^2\,\epsilon\|F_1^{\epsilon}\|_{\infty})^{\nicefrac{1}{2}}
        \delta_{\alpha}^{\frac{\alpha-1}{2}}
        \E\Big[\int_0^{\tau}\int_{\T^d}\Big|\nabla(\rho_r^{\epsilon,\eta}-\low)^{\frac{\alpha+1}{2}}_-\Big|^2dx\,dr\Big]^{\nicefrac{1}{2}}
        \\
        &+
        (C\alpha^2\,\epsilon\|F_1^{\epsilon}\|_{\infty})^{\nicefrac{1}{2}}
        \delta_{\alpha}^{-1}
        \E\Big[
        \sup_{t\in[0,\tau]}\int_{\T^d}(\rho^{\epsilon,\eta}_t-\low)^{\alpha+1}_-dx\Big]^{\nicefrac{1}{2}}
        \E\Big[\int_0^{\tau}\int_{\T^d}\Big|\nabla(\rho_r^{\epsilon,\eta}-\low)^{\frac{\alpha+1}{2}}_-\Big|^2dx\,dr\Big]^{\nicefrac{1}{2}}
        \\
        \leq&\,
        (C\alpha^2\,\epsilon\|F_1^{\epsilon}\|_{\infty})^{\nicefrac{1}{2}}
        \bigg(\frac{1}{2}
        \delta_{\alpha}^{\alpha-1}+\frac{1}{2}
        \E\Big[\int_0^{\tau}\int_{\T^d}\Big|\nabla(\rho_r^{\epsilon,\eta}-\low)^{\frac{\alpha+1}{2}}_-\Big|^2dx\,dr\Big]\bigg)
        \\
        &+
        \frac{1}{2}\E\Big[
        \sup_{t\in[0,\tau]}\int_{\T^d}(\rho^{\epsilon,\eta}_t-\low)^{\alpha+1}_-dx\Big]
        +
        \frac{1}{2}
        (C\alpha^2\,\epsilon\|F_1^{\epsilon}\|_{\infty})
        \delta_{\alpha}^{-2}\,
        \E\Big[\int_0^{\tau}\int_{\T^d}\Big|\nabla(\rho_r^{\epsilon,\eta}-\low)^{\frac{\alpha+1}{2}}_-\Big|^2dx\,dr\Big]
        \\
        \leq&\,
        \frac{1}{2}\E\bigg[
        \sup_{t\in[0,\tau]}\int_{\T^d}(\rho^{\epsilon,\eta}_t-\low)^{\alpha+1}_-dx\bigg]
        \\
        &+
        (C\alpha^2\epsilon\|F_1^{\epsilon}\|_{\infty})^{\nicefrac{1}{2}}\,
        \delta_{\alpha}^{\alpha-1}
        \\
        &+
        \Big(C\alpha^4\,\delta_{\alpha}^{-2}(\inf\dot{\phi})^{-1}\big(\epsilon^2\|F_1^{\epsilon}\|_{\infty}+\epsilon^{\nicefrac{3}{2}}\|F_1^{\epsilon}\|_{\infty}^{\nicefrac{1}{2}}\big)\|F_3^\epsilon\|_{\infty}\Big)
        \E\bigg[\int_0^{\tau}\int_{\T^d}(\rho^{\epsilon,\eta}_r-\low)^{\alpha-1}_-dx\,dr\bigg].
    \end{aligned}
\end{align}
}
In the first passage we used the Burkholder-Davis-Gundy inequality and the form of the noise term.
The second passage follows from H\"older's inequality.
In the third passage we used \eqref{proof/ moser 6} and we took the supremum in time and then split the first space integral according to the values of $\rho^{\epsilon,\eta}$.
The fourth passage follows again from H\"older's inequality and the splitting introduced.
The fifth and sixth passage follows from convexity and H\"older's inequality.
The last passage follows from \eqref{proof/ moser 9}.

Now, given an arbitrary bounded stopping time $\tau\leq T$, combining \eqref{proof/ moser 5} with \eqref{proof/ moser 7} and \eqref{proof/ moser 10} yields, for a constant $C=C(\low,T,\sigma)$ independent of $\eta,\epsilon\in(0,1)$ and $\alpha\geq1$, for any $\delta_{\alpha}\in(0,\low)$,
{\small
\begin{align}\label{proof/ moser 11}
    \begin{aligned}
        \E\bigg[&\sup_{t\in[0,\tau]}\int_{\T^d}(\rho^{\epsilon,\eta}_t-\low)^{\alpha+1}_- dx
        +
        \int_0^{\tau}\int_{\T^d}{\small\frac{4\alpha}{\alpha+1}}\dot{\phi}^\eta(\rho^{\epsilon,\eta}_r)\left|\nabla(\rho_r^{\epsilon,\eta}-\low)^{\frac{\alpha+1}{2}}\right|^2\,dx\,dr\bigg]
        \\
        &\leq
        \,\,
        \E\bigg[\sup_{t\in[0,\tau]}\bigg|\epsilon^{\nicefrac{1}{2}}\int_0^t\int_{\T^d}2\alpha\,(\rho^{\epsilon,\eta}_r-\low)^{\frac{\alpha-1}{2}}_-\,\nabla(\rho_r^{\epsilon,\eta}-\low)^{\frac{\alpha+1}{2}}_-\sigma^\eta(\rho^{\epsilon,\eta})\,\de \xi^\epsilon\bigg|\bigg]
        \\
        &\,\,\,+
        \E\bigg[\frac{\epsilon}{2}\int_0^{\tau}\int_{\T^d}\!\!\!\!\alpha(\alpha+1)(\rho^{\epsilon,\eta}_r-\low)^{\alpha-1}_-
        \left(\sigma^\eta(\rho^{\epsilon,\eta})\right)^2F_3^\epsilon \,dx\,dr\bigg]
        \\
        &\leq\,
        \frac{1}{2}\E\bigg[
        \sup_{t\in[0,\tau]}\int_{\T^d}(\rho^{\epsilon,\eta}_t-\low)^{\alpha+1}_-dx\bigg]
        \\
        &\,\,\,+
        \Big(C\alpha^4\,\delta_{\alpha}^{-2}(\inf\dot{\phi})^{-1}\big(\epsilon^2\,\|F_1^{\epsilon}\|_{\infty}\|F_3^\epsilon\|_{\infty}+\epsilon\,\|F_3^\epsilon\|_{\infty}\big)\Big)\bigg(\delta_{\alpha}^{\alpha-1}\,+\,
        \E\bigg[\int_0^{\tau}\int_{\T^d}(\rho^{\epsilon,\eta}_r-\low)^{\alpha-1}_-dx\,dr\bigg]\bigg).
    \end{aligned}
\end{align}
}
In turn, absorbing the first term on the right hand side of \eqref{proof/ moser 11} into the left hand side, thanks to the factor $\frac{1}{2}$ in front, and then multiplying both sides by $\max{\{\inf(\dot{\phi})^{-1},1\}}$ yields, for a \emph{fixed} constant $\bar{c}=\bar{c}(T,\sigma,\low)$ independent of $\eta,\epsilon\in(0,1)$ and $\alpha\geq1$,
{\small
\begin{align}\label{proof/ moser 12}
    \begin{aligned}
        \E\bigg[&\sup_{t\in[0,\tau]}\int_{\T^d}(\rho^{\epsilon,\eta}_t-\low)^{\alpha+1}_- dx
        +
        \int_0^{\tau}\int_{\T^d}\left|\nabla(\rho_r^{\epsilon,\eta}-\low)^{\frac{\alpha+1}{2}}\right|^2\,dx\,dr\bigg]
        \\
        &\leq
        \Big(\bar{c}\,\alpha^4\,\delta_{\alpha}^{-2}(\inf\dot{\phi})^{-2}\big(\epsilon^2\,\|F_1^{\epsilon}\|_{\infty}\|F_3^\epsilon\|_{\infty}+\epsilon\,\|F_3^\epsilon\|_{\infty}\big)\Big)\bigg(\delta_{\alpha}^{\alpha-1}\,+\,
        \E\bigg[\int_0^{\tau}\int_{\T^d}(\rho^{\epsilon,\eta}_r-\low)^{\alpha-1}_-dx\,dr\bigg]\bigg).
    \end{aligned}
\end{align}
}
In particular we notice that, upon possibly enlarging the constant $\bar{c}$ by a factor $(1+T)$, when $\alpha=1$ formula \eqref{proof/ moser 12} reduces to 
{\small
\begin{align}\label{proof/ moser 12bis}
    \begin{aligned}
        \E\bigg[\!\sup_{t\in[0,T]}\int_{\T^d}\!\!\!(\rho^{\epsilon,\eta}_t-\low)^{2}_- dx
        \!+\!
        \int_0^{\tau}\!\int_{\T^d}\!\!\!\left|\nabla(\rho_r^{\epsilon,\eta}-\low)\right|^2\,dx\,dr\bigg]
        \leq
        \Big(\bar{c}\,\alpha^4\,\delta_{\alpha}^{-2}(\inf\dot{\phi})^{-2}\big(\epsilon^2\,\|F_1^{\epsilon}\|_{\infty}\|F_3^\epsilon\|_{\infty}+\epsilon\,\|F_3^\epsilon\|_{\infty}\big)\Big).
    \end{aligned}
\end{align}
}

Since the stopping time $\tau$ in \eqref{proof/ moser 12} is arbitrary, it follows from \cite[Chapter IV, Proposition 4.7 and Exercise 4.30]{revuz_yor_book} that, for any $\alpha\geq 1$, for the same constant $\bar{c}=\bar{c}(T,\sigma,\low)$ introduced in \eqref{proof/ moser 12} and independent of $\eta,\epsilon\in(0,1)$ and of $\alpha\geq1$, for any $\delta_{\alpha}\in(0,1)$,
{\small
\begin{align}\label{proof/ moser 13}
    \begin{aligned}
        \E\Bigg[&\bigg(\sup_{t\in[0,T]}\int_{\T^d}(\rho^{\epsilon,\eta}_t-\low)^{\alpha+1}_- dx
        +
        \int_0^{T}\int_{\T^d}\left|\nabla(\rho_r^{\epsilon,\eta}-\low)^{\frac{\alpha+1}{2}}\right|^2\,dx\,dr\bigg)^{\frac{1}{\alpha+1}}\Bigg]
        \\
        &\leq
        \frac{(\alpha+1)^{\frac{1}{\alpha+1}}}{1-\frac{1}{\alpha+1}}\, \big(\alpha^4\delta_{\alpha}^{-2}\big)^{\frac{1}{\alpha+1}}
        \Big(\bar{c}\,\,(\inf\dot{\phi})^{-2}\big(\epsilon^2\,\|F_1^{\epsilon}\|_{\infty}\|F_3^\epsilon\|_{\infty}+\epsilon\,\|F_3^\epsilon\|_{\infty}\big)\Big)^{\frac{1}{\alpha+1}}
        \\
        &
        \qquad\qquad\qquad\qquad\qquad\qquad
        \cdot\E\Bigg[\bigg(\delta_{\alpha}^{\alpha-1}\,+\,
        \int_0^{T}\int_{\T^d}(\rho^{\epsilon,\eta}_r-\low)^{\alpha-1}_-dx\,dr\bigg)^{\frac{1}{\alpha+1}}\Bigg].
    \end{aligned}
\end{align}
}

We are now ready to conclude the proof using a Moser iteration argument.
To ease the notation, let us define $\psi:=(\rho^{\epsilon,\eta}-\low)_-$.
Using that $(x+y)^{\nicefrac{1}{h}}\leq x^{\nicefrac{1}{h}}+y^{\nicefrac{1}{h}}$, for $h\geq 1$ and $x,y\geq0$, and H\"older's inequality with exponent $\frac{\alpha+1}{\alpha-1}$ in \eqref{proof/ moser 13} gives, for the constant $\bar{c}$ introduced in \eqref{proof/ moser 12}, for any $\alpha\geq1$ and any $\delta_{\alpha}\in(0,1)$,
{\small
\begin{align}\label{proof/ moser 14}
    \begin{aligned}
        \E&\bigg[\Big(\|\psi\|_{L_t^{\infty}L_x^{\alpha+1}}^{\alpha+1}+\|\nabla\psi^{\frac{\alpha+1}{2}}\|_{L_t^2L_x^2}^2\Big)^{\frac{1}{\alpha+1}}\bigg]
        \\
        &\leq
        \frac{(\alpha+1)^{\frac{1}{\alpha+1}}}{1-\frac{1}{\alpha+1}}\, \big(\alpha^4\delta_{\alpha}^{-2}\big)^{\frac{1}{\alpha+1}}
        \Big(\bar{c}\,(\inf\dot{\phi})^{-2}\big(\epsilon^2\,\|F_1^{\epsilon}\|_{\infty}\|F_3^\epsilon\|_{\infty}+\epsilon\,\|F_3^\epsilon\|_{\infty}\big)\Big)^{\frac{1}{\alpha+1}}
        \left(\delta_{\alpha}^{\frac{\alpha-1}{\alpha+1}}\!+\!\E\big[\|\psi\|_{L_t^{\!\alpha-1}\!L_x^{\!\alpha-1}}\big]^{\frac{\alpha-1}{\alpha+1}}\right).
    \end{aligned}
\end{align}
}
We first use interpolation and Sobolev inequalities to deduce that for
\[\lambda=\frac{2}{2+d}\;\;\textrm{and}\;\;q = \frac{(2+d)(\a+1)}{d},\]
we have that, for a \emph{fixed} constant $\Tilde{c}=\Tilde{c}(T,d)$ independent of $\eta,\epsilon\in(0,1)$ and $\alpha\geq1$,
\begin{align}\label{proof/ moser 15}
\begin{aligned}
& \norm{\psi}_{L^q(\TT^d\times[0,T])} \leq \norm{\psi}^\lambda_{L^\infty([0,T];L^{\a+1}(\TT^d))}\norm{\psi}^{1-\lambda}_{L^{\a+1}([0,T];L^{\frac{2_*}{2}(\a+1)}(\TT^d))}
\\ & = \norm{\psi}^\lambda_{L^\infty([0,T];L^{\a+1}(\TT^d))}\norm{\psi^{\frac{\a+1}{2}}}^{\frac{2(1-\lambda)}{\a+1}}_{L^2([0,T];L^{2_*}(\TT^d))}.
\\ & \leq\norm{\psi}^\lambda_{L^\infty([0,T];L^{\a+1}(\TT^d))}\left(\Tilde{c}\left(\norm{\psi}_{L^\infty([0,T];L^{\a+1}(\TT^d))}^\frac{\a+1}{2}+\norm{\nabla \psi^{\frac{\a+1}{2}}}_{L^2([0,T];L^{2}(\TT^d))}\right)\right)^{\frac{2(1-\lambda)}{\a+1}}.
\end{aligned}
\end{align}
Now H\"older's inequality, the inequality $(x+y)^2\leq 2(x^2+y^2)$ for all $x,y\in[0,\infty)$, the fact that $\lambda\in(0,1)$, $\a\in[1,\infty)$, and \eqref{proof/ moser 14} prove that, for the constant $C_0:=\bar{c}\Tilde{c}$, for $\bar{c}$ and $\Tilde{c}$ introduced in \eqref{proof/ moser 12} and \eqref{proof/ moser 15}, independent of $\epsilon,\eta\in(0,1)$ and $\alpha\geq1$, 
{\small
\begin{align}
\begin{aligned}\label{proof/ moser 16}
\E\Big[& \norm{\psi}_{L^q(\TT^d\times[0,T])}\Big]
\\   \leq & \E\left[\norm{\psi}_{L^\infty([0,T];L^{\a+1}(\TT^d))}\right]^\lambda\E\left[\left(\tilde{c}\norm{\psi}^{\a+1}_{L^\infty([0,T];L^{\a+1}(\TT^d))}+\Tilde{c}\norm{\nabla \psi^{\frac{\a+1}{2}}}^2_{L^2([0,T];L^{2}(\TT^d))}\right)^\frac{1}{\a+1}\right]^{1-\lambda}
\\  \leq & \E\left[\left(\tilde{c}\norm{\psi}^{\a+1}_{L^\infty([0,T];L^{\a+1}(\TT^d))}+\tilde{c}\norm{\nabla \psi^{\frac{\a+1}{2}}}^2_{L^2([0,T];L^{2}(\TT^d))}\right)^\frac{1}{\a+1}\right]
\\ 
\leq&
\frac{(\alpha+1)^{\frac{1}{\alpha+1}}}{1-\frac{1}{\alpha+1}}\, \big(\alpha^4\delta_{\alpha}^{-2}\big)^{\frac{1}{\alpha+1}}
        \Big(C_0\,(\inf\dot{\phi})^{-2}\big(\epsilon^2\,\|F_1^{\epsilon}\|_{\infty}\|F_3^\epsilon\|_{\infty}+\epsilon\,\|F_3^\epsilon\|_{\infty}\big)\Big)^{\frac{1}{\alpha+1}}
        \left(\delta_{\alpha}^{\frac{\alpha-1}{\alpha+1}}\!+\!\E\big[\|\psi\|_{L_t^{\!\alpha-1}\!L_x^{\!\alpha-1}}\big]^{\frac{\alpha-1}{\alpha+1}}\right).
\end{aligned}
\end{align}
}
Furthermore, thanks to \eqref{proof/ moser 12bis}, when $\alpha=1$ we can replace the last factor on the right hand side of \eqref{proof/ moser 16} simply with $1$.

We now iterate inequality \eqref{proof/ moser 16} along a suitable sequence of exponents $\alpha_k$ tending to infinity and small parameters $\delta_{k}=\delta_{\alpha_k}$ for $k\in\N$.
Namely, mimicking the relation between the exponents $q$, $\alpha$ and $\alpha-1$ in formula \eqref{proof/ moser 16}, which holds for any $\alpha\in[1,\infty)$, we define
\begin{align}\label{proof/ moser 17.1}
    \begin{aligned}
        \alpha_k=\frac{2+d}{d}(\alpha_{k-1}+2)\quad\text{and}\quad\beta_{k}=\alpha_{k-1}+1\quad\forall\,k\in\N^*,\quad\alpha_0=0.
    \end{aligned}
\end{align}
This gives
\begin{align}\label{proof/ moser 17.2}
    \begin{aligned}
        \alpha_k=(d+2)\big((1+\nicefrac{2}{d})^k-1\big)\quad\text{and}\quad\beta_{k}=(d+2)\big((1+\nicefrac{2}{d})^{k-1}-1\big)+1\quad\forall\,k\in\N^*.
    \end{aligned}
\end{align}
We also introduce the shorthand
\begin{align}\label{proof/ moser 17.3}
    \begin{aligned}
        u_{\beta_k}:=\frac{1}{\beta_k+1}\quad\forall\,k\in\N^*.
    \end{aligned}
\end{align}
Let us now define
\begin{align}\label{proof/ moser 17.4}
    \begin{aligned}
        \gamma:=\inf_{k\geq2}\prod_{i=2}^k\frac{\beta_i-1}{\beta_i+1}=\lim_{k\to\infty}\prod_{i=2}^k\frac{\beta_i-1}{\beta_i+1}\in(0,1).
    \end{aligned}
\end{align}
It is shown in \eqref{proof/ moser 21} below that indeed $\gamma>0$.
In turn we define
\begin{align}\label{proof/ moser 17.5}
    \begin{aligned}
        \delta_k:=k^{-\nicefrac{2}{\gamma}}\quad\forall\,k\in\N^*.
    \end{aligned}
\end{align}
Finally, let us also introduce the shorthand
\begin{align}\label{proof/ moser 17.6}
    \begin{aligned}
        R_\epsilon:=C_0\,(\inf\dot{\phi})^{-2}\big(\epsilon^2\,\|F_1^{\epsilon}\|_{\infty}\|F_3^\epsilon\|_{\infty}+\epsilon\,\|F_3^\epsilon\|_{\infty}\big).
    \end{aligned}
\end{align}

Iterating inequality \eqref{proof/ moser 16} over the sequence $\alpha_k$, using the inequality $(x+y)^{\nicefrac{1}{h}}\leq x^{\nicefrac{1}{h}}+y^{\nicefrac{1}{h}}$ for $h\geq 1$ and $x,y\geq0$ at each step, yields
{\small
\begin{align}\label{proof/ moser 18}
\begin{aligned}
    \E\Big[ \|\psi\|_{L^{\!\alpha_k}(\T^d\times[0,T])}\Big]
    \leq &\,
    \frac{u_{\beta_k}^{-u_\beta{k}}}{1-u_{\beta_k}}\beta_k^{4u_{\beta_k}}\delta_k^{-2u_{\beta_k}}R_\epsilon^{u_{\beta_k}}\left(\delta_k^{\frac{\beta_k-1}{\beta_k+1}}+\E\Big[\|\psi\|_{L^{\!\alpha_{k-1}}(\T^d\times[0,T])}\Big]^{\frac{\beta_k-1}{\beta_k+1}}\right)
    \\
    \leq &\,
    \frac{u_{\beta_k}^{-u_\beta{k}}}{1-u_{\beta_k}}\beta_k^{4u_{\beta_k}}\delta_k^{-2u_{\beta_k}}R_\epsilon^{u_{\beta_k}}\delta_k^{\frac{\beta_k-1}{\beta_k+1}}
    \\
    &+
    \frac{u_{\beta_k}^{-u_\beta{k}}}{1-u_{\beta_k}}\beta_k^{4u_{\beta_k}}\delta_k^{-2u_{\beta_k}}R_\epsilon^{u_{\beta_k}}
    \left(
    \frac{u_{\beta_{k-1}}^{-u_\beta{{k-1}}}}{1-u_{\beta_{k-1}}}\beta_{k-1}^{4u_{\beta_{k-1}}}\delta_{k-1}^{-2u_{\beta_{k-1}}}R_\epsilon^{u_{\beta_{k-1}}}
    \right)^{\frac{\beta_k-1}{\beta_k+1}}
    \\
    &\qquad\qquad\qquad\qquad\qquad\qquad\qquad\cdot
    \left(\delta_{k-1}^{\frac{\beta_{k-1}-1}{\beta_{k-1}+1}}+\E\Big[\|\psi\|_{L^{\!\alpha_{k-2}}(\T^d\times[0,T])}\Big]^{\frac{\beta_{k-1}-1}{\beta_{k-1}+1}}\right)^{\frac{\beta_k-1}{\beta_k+1}}
    \\
    \leq&
    \sum_{j=1}^k\,\,\,\delta_j^{\prod_{i=j}^k\frac{\beta_i-1}{\beta_i+1}}
    \times\,\,
    \prod_{l=j}^k\left(\frac{u_{\beta_l}^{-u_{\beta_l}}}{1-u_{\beta_l}}\,\,\beta_l^{4u_{\beta_l}}\,\,\delta_l^{-2u_{\beta_l}}\right)^{\prod_{i=l+1}^k\frac{\beta_i-1}{\beta_i+1}}
    \!\!\!\times\,
    R_\epsilon^{\sum_{l=j}^k\left(u_{\beta_l}\prod_{i=l+1}^k\frac{\beta_i-1}{\beta_i+1}\right)}.
\end{aligned}
\end{align}
}
We remark that in the last passage of the iteration, from $\alpha_1$ to $\alpha_0$, we do not pick up any integral of $\psi$ since by construction $\alpha_0=0$ (cf. formula \eqref{proof/ moser 12bis} and the comment after formula \eqref{proof/ moser 16}).
Finally, recalling \eqref{proof/ moser 17.4}-\eqref{proof/ moser 17.5}, we estimate
{\small
\begin{align}\label{proof/ moser 19}
\begin{aligned}
    \E\Big[& \|\psi\|_{L^{\alpha_k}(\TT^d\times[0,T])}\Big]
    \\
    \leq&
    \left(
    \sum_{j=1}^k\delta_j^{\gamma}\,\,
    R_\epsilon^{\sum_{l=j}^k\left(u_{\beta_l}\prod_{i=l+1}^k\frac{\beta_i-1}{\beta_i+1}\right)}
    \right)
    \times
    \sup_{k\in\N^*}
    \prod_{l=1}^k\left(\frac{u_{\beta_l}^{-u_{\beta_l}}}{1-u_{\beta_l}}\,\,\beta_l^{4u_{\beta_l}}\right)^{\prod_{i=l+1}^k\frac{\beta_i-1}{\beta_i+1}}
    \!\!\times
    \sup_{k\in\N^*}
    \prod_{l=1}^k\delta_l^{-2u_{\beta_l}\prod_{i=l+1}^k\frac{\beta_i-1}{\beta_i+1}}.
\end{aligned}
\end{align}
}

We now analyze the limit of the infinite sum and products in \eqref{proof/ moser 19}.
As anticipated in \eqref{proof/ moser 17.4} we have $\gamma\in(0,1)$.
Indeed, passing to logarithms, using that $\log(1-z)\geq-(1+d)z$ for $z\leq\frac{d}{1+d}$ and the expression \eqref{proof/ moser 17.2}, we find
\begin{equation}
    \label{proof/ moser 21}
    \log\left(\prod_{i=2}^k\frac{\beta_i-1}{\beta_i+1}\right)=\sum_{i=2}^k\log\left(1-\frac{2}{\beta_i+1}\right)\geq -2(1+d)\sum_{i=1}^\infty\frac{1}{\beta_i+1}>-\infty.
\end{equation}
In particular the series $\sum_{j=1}^{\infty}\delta_j^\gamma<\infty$ converges since $\delta_j=j^{-\nicefrac{2}{\gamma}}$.
Furthermore, for every $l\leq k\in\N^*$, we have 
\[
\gamma\, u_{\beta_l}\leq u_{\beta_l}\prod_{i=l+1}^k\frac{\beta_i-1}{\beta_i+1}\leq u_{\beta_l}.
\]
Therefore, by \eqref{proof/ moser 17.2} and \eqref{proof/ moser 21} and dominated convergence, for every $j\in\N^*$ the following limit exists and satisfies the inequality
{\small
\begin{equation}
    \label{proof/ moser 23}
    \gamma\sum_{l=j}^\infty u_{\beta_l}\leq\lim_{k\to\infty}\sum_{l=j}^k \left(u_{\beta_l}\prod_{i=l+1}^k\frac{\beta_i-1}{\beta_i+1}\right)
    =
    \sum_{l=j}^\infty \left(u_{\beta_l}\prod_{i=l+1}^\infty\frac{\beta_i-1}{\beta_i+1}\right)
    \leq
    \sum_{l=j}^\infty u_{\beta_l}
    <\infty.
\end{equation}
}
In particular we have the estimate, for every $j\in\N^*$,
\begin{equation}
    \label{proof/ moser 24}
    R_{\epsilon}^{\sum_{l=j}^k \left(u_{\beta_l}\prod_{i=l+1}^k\frac{\beta_i-1}{\beta_i+1}\right)}
    \leq
    \max\left\{1, R_{\epsilon}^{\sum_{l=1}^\infty u_{\beta_l}}\right\}.
\end{equation}
Furthermore, the first supremum on the right hand side of \eqref{proof/ moser 19} is finite since we compute, recalling the expression \eqref{proof/ moser 17.2},
{\small
\begin{align}\label{proof/ moser 26}
\begin{aligned}
    \log\left(
    \prod_{l=1}^k\left(\frac{u_{\beta_l}^{-u_{\beta_l}}}{1-u_{\beta_l}}\,\,\beta_l^{4u_{\beta_l}}\right)^{\prod_{i=l+1}^k\frac{\beta_i-1}{\beta_i+1}}\right)
    &=
    \sum_{l=1}^k\prod_{i=l+1}^k\frac{\beta_i-1}{\beta_i+1}\left(\frac{1}{\beta_l+1}\log(\beta_l+1)+\log(1+\nicefrac{1}{\beta_l})+\frac{4}{\beta_l+1}\log(\beta_l)\right)
    \\
    &\lesssim
    \sum_{l=1}^\infty\frac{1+\log(\beta_l)}{\beta_l}<\infty.
\end{aligned}
\end{align}
}
Similarly, the second supremum on the right hand side of \eqref{proof/ moser 19} is finite since we compute, recalling also \eqref{proof/ moser 17.4}-\eqref{proof/ moser 17.5},
{\small
\begin{align}\label{proof/ moser 28}
\begin{aligned}
    \log\left(
    \prod_{l=1}^k\delta_l^{-2u_{\beta_l}\prod_{i=l+1}^k\frac{\beta_i-1}{\beta_i+1}}
    \right)
    =
    \sum_{l=1}^k
    2u_{\beta_l}\prod_{i=l+1}^k\frac{\beta_i-1}{\beta_i+1}\log(\delta_l^{-1})
    \leq
    \sum_{l=1}^k\frac{4}{\gamma}\frac{\log(l)}{\beta_l+1}<\infty.
\end{aligned}
\end{align}
}

We now let $k\to\infty$ in \eqref{proof/ moser 19} and, by formulas \eqref{proof/ moser 21}-\eqref{proof/ moser 28} and dominated convergence, we obtain, for a \emph{numeric} constant $c^*\geq0$ only depending on the sequences chosen in \eqref{proof/ moser 17.1}-\eqref{proof/ moser 17.5},
{\small
\begin{align}\label{proof/ moser 29}
\begin{aligned}
    \E\Big[\|(\rho^{\epsilon,\eta}-\low)_-\|_{L^{\infty}(\T^d\times[0,T])}\Big]
    =
    \lim_{k\to\infty}\E\Big[ \|(\rho^{\epsilon,\eta}-\low)_-\|_{L^{\alpha_k}(\T^d\times[0,T])}\Big]
    \leq
    c^*\sum_{j=1}^\infty j^{-2}\,\,
    R_\epsilon^{\sum_{l=j}^\infty\left(u_{\beta_l}\prod_{i=l+1}^\infty\frac{\beta_i-1}{\beta_i+1}\right)}.
\end{aligned}
\end{align}
}
In particular, thanks to \eqref{proof/ moser 24}, if $R_{\epsilon}\to0$ then the right hand side of \eqref{proof/ moser 29} vanishes by dominated convergences.
In fact, formula \eqref{proof/ moser 17.1}-\eqref{proof/ moser 17.5} and tedious, but elementary, estimates with geometric series and logarithms allow to further quantify the exponent hitting $R_{\epsilon}$ in \eqref{proof/ moser 29}, and yield the estimate, for the same numeric constant $c^*$,
\begin{align}\label{proof/ moser 30}
\begin{aligned}
    \E\Big[\|(\rho^{\epsilon,\eta}-\low)_-\|_{L^{\infty}(\T^d\times[0,T])}\Big]
    \leq
    c^*\sum_{j=1}^\infty j^{-2}
    \begin{cases}
        R_{\epsilon}^{\nicefrac{d}{2}(1+\nicefrac{2}{d})^{2-j}}\,\,\,\,\,\qquad\quad\qquad\text{if $R_\epsilon\geq1$,}
        \\
        R_{\epsilon}^{\exp(-d(d+1))\nicefrac{1}{2}(1+\nicefrac{2}{d})^{1-j}}\quad\text{if $R_\epsilon\in(0,1]$.}
    \end{cases}
\end{aligned}
\end{align}

In conclusion, we note that the constant $C_0$ introduced in \eqref{proof/ moser 16} and featuring in the definition \eqref{proof/ moser 17.6} of $R_\epsilon$ is independent of $\eta\in(0,1)$.
Therefore we pass to the limit $\eta\to0$ and use Proposition \ref{proposition/kinetic solutions of dean-kawasaki depend continuously on the coefficients} and Fatou's Lemma in \eqref{proof/ moser 30} to obtain an identical estimate for the solution $\rho^{\epsilon}$ of the original equation.
This concludes the proof.
\end{proof}

We now go back to the the central limit theorem.
Applying Proposition \ref{proposition/moser iteration} to the solution $\rho^{\epsilon,\eta}$ of \eqref{equation/dean--kawasaki with smoothed coefficients 2} with the smoothed coefficients $\phi^\eta$, $\nu^\eta$, $\sigma^\eta$ given in Lemma \ref{lemma/ smooth approximations only near zero} and using Markov's inequality we readily obtain the following.
\begin{corollary}
\label{corollary/solutions are bounded by initial data with high probability}
In the setting of Corollary \ref{corollary/clt for the smoothed solutions}, for every $\epsilon,\eta\in(0,1)$ and every $\delta\in[0,\low)$, we have
    \begin{equation}
        \P\left(\underset{\T^d\times[0,T]}{\text{ess-inf }}\rho^{\epsilon,\eta}<\low\right)
        =
        \P\left(\underset{\T^d\times[0,T]}{\text{ess-sup }}\left(\rho^{\epsilon,\eta}-\low\right)_->\low-\delta\right)
        \leq
        \frac{c^*}{\low-\delta}\sum_{j=1}^{\infty}\frac{1}{j^2}\, 
        R_{\epsilon,\eta}^{\,\,\zeta\,(1+\nicefrac{2}{d})^{-j}},
    \end{equation}
where $c^*\in(0,\infty)$ is a \emph{numeric} constant and 
\begin{equation}
\label{formula/ reminder R epsilon 2}
    R_{\epsilon,\eta}:=C_0\,(\inf\dot{\phi}^\eta)^{-2}\big(\epsilon^2\,\|F_1^{\epsilon}\|_{\infty}\|F_3^\epsilon\|_{\infty}+\epsilon\,\|F_3^\epsilon\|_{\infty}\big),\quad
    \zeta:=\begin{cases}\frac{d}{2}(1+\nicefrac{2}{d})^2\,\qquad\quad\,\,\,\,\,\text{if $R_\epsilon\geq1$,}\\
e^{-d(d+1)}(\nicefrac{1}{2}+\nicefrac{1}{d})\quad\text{if $R_\epsilon\in(0,1)$,}\end{cases}
\end{equation}
for a constant $C_0=C_0(T,d,\phi,\sigma,\nu,\low)$ independent of $\epsilon,\eta\in(0,1)$.
\end{corollary}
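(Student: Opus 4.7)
The plan is to derive the corollary as a routine application of Proposition \ref{proposition/moser iteration} followed by Markov's inequality, with the bulk of the work being a sanity check on the hypotheses for the smoothed coefficients.

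First I would verify that Proposition \ref{proposition/moser iteration} indeed applies to the smoothed solution $\rho^{\epsilon,\eta}$. By construction, Lemma \ref{lemma/ smooth approximations only near zero} guarantees that $\phi^\eta,\nu^\eta,\sigma^\eta$ satisfy Assumption \ref{assumption/assumption C1} (uniformly in $\eta$) and also Assumption (i)--(ii) from Theorem \ref{theorem/existence and uniqueness of weak solution to dean_kawasaki}; in particular $\inf_{z>0}\dot{\phi}^\eta(z) > 0$, which is the key nondegeneracy hypothesis required in Proposition \ref{proposition/moser iteration}. The initial data $\rho_0$, being a random constant bounded below by $\low$, satisfies Assumption \ref{assumption/assumption I1}. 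Plugging $\rho^{\epsilon,\eta}$ into Proposition \ref{proposition/moser iteration} then produces the estimate
\begin{equation*}
    \E\Big[\big\|(\rho^{\epsilon,\eta}-\low)_-\big\|_{L^\infty(\T^d\times[0,T])}\Big]\leq c^*\sum_{j=1}^{\infty}\frac{1}{j^2}\,R_{\epsilon,\eta}^{\,\zeta\,(1+\nicefrac{2}{d})^{-j}},
\end{equation*}
with $R_{\epsilon,\eta}$ and $\zeta$ exactly as in \eqref{formula/ reminder R epsilon 2} (the dependence of the constant $C_0$ on $\phi,\nu,\sigma$ transfers cleanly because the $c$-constants in Assumption \ref{assumption/assumption C1} are uniform in $\eta$).

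Next I would record the trivial identification of the two events. Since $\rho_0\geq\low$ almost surely, the pointwise inequality $\rho^{\epsilon,\eta}(x,t,\omega)<\delta$ is equivalent to $(\rho^{\epsilon,\eta}-\low)_-(x,t,\omega)>\low-\delta$, so for every $\delta\in[0,\low)$,
\begin{equation*}
\Big\{\essinf_{\T^d\times[0,T]}\rho^{\epsilon,\eta}<\delta\Big\}=\Big\{\esssup_{\T^d\times[0,T]}(\rho^{\epsilon,\eta}-\low)_->\low-\delta\Big\},
\end{equation*}
which yields the equality of probabilities stated in the corollary (reading the first probability with $\delta$ in place of $\low$, which I take to be the intended threshold).

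Finally, applying Markov's inequality to the nonnegative random variable $\|(\rho^{\epsilon,\eta}-\low)_-\|_{L^\infty(\T^d\times[0,T])}$ with the threshold $\low-\delta$ and combining with the Moser estimate above immediately gives
\begin{equation*}
\P\Big(\esssup_{\T^d\times[0,T]}(\rho^{\epsilon,\eta}-\low)_->\low-\delta\Big)\leq \frac{1}{\low-\delta}\,\E\Big[\big\|(\rho^{\epsilon,\eta}-\low)_-\big\|_{L^\infty}\Big]\leq \frac{c^*}{\low-\delta}\sum_{j=1}^{\infty}\frac{1}{j^2}\,R_{\epsilon,\eta}^{\,\zeta\,(1+\nicefrac{2}{d})^{-j}},
\end{equation*}
which is exactly the stated bound. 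There is no real obstacle here: the heavy lifting is all done in Proposition \ref{proposition/moser iteration}, and what remains is just verifying hypotheses, recognizing the event equivalence, and invoking Markov. The only mild subtlety is to make sure the constant $C_0$ in $R_{\epsilon,\eta}$ can be taken independent of $\eta$, which follows from the uniform validity of Assumption \ref{assumption/assumption C1} across the approximating sequence.
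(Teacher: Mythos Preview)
Your proposal is correct and mirrors the paper's own argument, which is simply stated as ``Applying Proposition \ref{proposition/moser iteration} to the solution $\rho^{\epsilon,\eta}$ \dots\ and using Markov's inequality we readily obtain the following.'' You have also correctly flagged the apparent typo in the displayed equality: the event on the left should read $\{\essinf_{\T^d\times[0,T]}\rho^{\epsilon,\eta}<\delta\}$ rather than $<\low$, which is indeed how the result is used downstream in \eqref{equation/solution and smoothed solutions coincide with high probability} with $\delta=\delta_\eta$.
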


\medskip

In turn, Corollary \ref{corollary/clt for the smoothed solutions} and the definition of $\Omega^{\epsilon,\eta}$ yield that, for every $\epsilon\in(0,1)$ and every $\eta\in(0,1)$ small enough,
{\small
\begin{align}
\begin{aligned}\label{equation/solution and smoothed solutions coincide with high probability}
        \P\bigg(\underset{\T^d\times[0,T]}{\text{ess-sup}}\left|v^\epsilon-v^{\epsilon,\eta}\right|\neq0\bigg)
        =
        \P\bigg(\underset{\T^d\times[0,T]}{\text{ess-sup}}\left|\rho^\epsilon-\rho^{\epsilon,\eta}\right|\neq0\bigg)
        \leq
        \P\Big(\left(\Omega^{\epsilon,\eta}\right)^c\Big)
        \leq
        \frac{c^*}{\low-\delta_{\eta}}\sum_{j=1}^{\infty}\frac{1}{j^2}\, 
        R_{\epsilon,\eta}^{\,\,\zeta\,(1+\nicefrac{2}{d})^{-j}}.
\end{aligned}
\end{align}
}
In particular we remark that, as we keep $\eta\in(0,1)$ fixed and we let $\epsilon\to0$ along the scaling regime \eqref{equation/scaling regime for noise}, the right hand side of \eqref{equation/solution and smoothed solutions coincide with high probability} vanishes by dominated convergence.

We finally establish our main result.

\begin{theorem}[Central limit theorem in probability]
\label{theorem/clt for fluctuations 3}
Let $\rho_0$ satisfy Assumption \ref{assumption/assumption I1}(ii), i.e. a random positive constant, let $(\xi^\epsilon)_{\epsilon>0}$ satisfy Assumption \ref{assumption/assumption N2}, and let $\phi,\nu,\sigma$ satisfy Assumption \ref{assumption/assumption C1} and \ref{assumption/assumption C2 weak}, for some $p\geq2$ and $m\geq1$.
For every $\epsilon>0$, let $\rho^\epsilon$ be the stochastic kinetic solution to the generalized Dean--Kawasaki equation \eqref{equation/generalized dean--kawasaki equation 3} with initial data $\rho_0$ and let $\Bar{\rho}\equiv\rho_0$ be the solution to the zero noise limit \eqref{equation/hydrodynamic limit equation 1}.
Let $v$ be the solution to the Langevin equation \eqref{equation/OU equation 1} with noise $\xi=\lim_{\epsilon\to0}\xi^\epsilon$.
Along a scaling regime where $\epsilon\to0$ and $\xi^\epsilon\to\xi$ such that
\begin{equation}
    \label{equation/scaling regime for noise weak clt}
    \lim_{\epsilon\to0}\,\,\sqrt{\epsilon}\,
    \Big(\!\left\|F_1^\epsilon\right\|_{\infty}\!\!+\left\|F_2^\epsilon\right\|_{\infty}+\|F_3^\epsilon\|_{\infty}\Big)=0,
\end{equation}
for the nonequilibrium fluctuations we have
\[v^\epsilon=\epsilon^{-\nicefrac{1}{2}}(\rho^\epsilon-\Bar{\rho})\to v\quad\text{ in $L^{\tau}_{\text{loc}}([0,\infty);H^{-\beta}(\T^d)))$ in probability,}\]
for $\tau=2$ or $\tau=\infty$,  for any $\beta>\frac{d}{2}$ or $\beta>1+\frac{d}{2}$ respectively, with explicit rate of convergence given in \eqref{formula/ rate of convergence clt in probability} below.
\end{theorem}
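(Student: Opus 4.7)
The plan is to reduce the rougher-coefficient setting to the one already handled in Theorem \ref{theorem/clt for fluctuations 1} by localising to the high-probability event on which the true solution $\rho^\epsilon$ agrees with a suitably smoothed version $\rho^{\epsilon,\eta}$. Concretely, fix $\eta\in(0,1)$ so small that $\delta_\eta<\low$, and let $\phi^\eta,\nu^\eta,\sigma^\eta$ be the approximating coefficients provided by Lemma \ref{lemma/ smooth approximations only near zero}. These satisfy Assumption \ref{assumption/assumption C2} (though not uniformly in $\eta$), so Theorem \ref{theorem/clt for fluctuations 1} (equivalently Corollary \ref{corollary/clt for the smoothed solutions}) applies and yields $v^{\epsilon,\eta}\to v$ in $L^2(\Omega;L^\tau([0,T];H^{-\beta}(\T^d)))$ as $\epsilon\to0$ along the scaling regime \eqref{equation/scaling regime for noise weak clt}, with the explicit rate given by \eqref{equation/rate of convergence 2}.

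The first step is to upgrade this to convergence in probability for the true fluctuations by splitting along $\Omega^{\epsilon,\eta}$. For every $\delta>0$, using that $v^\epsilon=v^{\epsilon,\eta}$ on $\Omega^{\epsilon,\eta}$ by \eqref{equation/solutions coincide on Omega^(epsilon,eta)} and Markov's inequality,
\begin{align*}
\P\bigl(\|v^\epsilon-v\|_{L^\tau H^{-\beta}}>\delta\bigr)
\leq\,&\P\bigl(\{\|v^{\epsilon,\eta}-v\|_{L^\tau H^{-\beta}}>\delta\}\cap\Omega^{\epsilon,\eta}\bigr)+\P\bigl((\Omega^{\epsilon,\eta})^c\bigr)
\\
\leq\,&\delta^{-2}\,\E\bigl[\|v^{\epsilon,\eta}-v\|^2_{L^\tau H^{-\beta}}\bigr]+\P\bigl((\Omega^{\epsilon,\eta})^c\bigr).
\end{align*}
For fixed $\eta$, the first term vanishes as $\epsilon\to0$ along \eqref{equation/scaling regime for noise weak clt} by Theorem \ref{theorem/clt for fluctuations 1}, while the second term is controlled by Corollary \ref{corollary/solutions are bounded by initial data with high probability} and the observation \eqref{equation/solution and smoothed solutions coincide with high probability}, which shows that $\P((\Omega^{\epsilon,\eta})^c)\to0$ as $\epsilon\to0$ by dominated convergence, since $R_{\epsilon,\eta}\to0$ under \eqref{equation/scaling regime for noise weak clt}.

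The second step is to produce the quantitative rate \eqref{formula/ rate of convergence clt in probability}. The two contributions above depend on $\eta$ through two opposing mechanisms: the $L^2$ rate from Theorem \ref{theorem/clt for fluctuations 1} depends on the constants $c_\delta$ in Assumption \ref{assumption/assumption C2} for the smoothed coefficients, which degenerate as $\eta\to0$; whereas the probability bound from Corollary \ref{corollary/solutions are bounded by initial data with high probability} deteriorates as $\eta\to0$ only through the factor $(\low-\delta_\eta)^{-1}$ and through $(\inf\dot{\phi}^\eta)^{-2}$ inside $R_{\epsilon,\eta}$. Choosing $\eta=\eta(\epsilon)\to0$ slowly enough so that the explosion of both constants is dominated by the decay of $\epsilon^{1/2}|F_3^\epsilon|_\infty^{1/2}$ (from \eqref{equation/rate of convergence 2}) and of $R_{\epsilon,\eta(\epsilon)}$ respectively, the sum of the two terms provides the explicit rate of convergence in probability. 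The main obstacle here is bookkeeping: the constants in Assumption \ref{assumption/assumption C2} for $\phi^\eta,\nu^\eta,\sigma^\eta$ depend on how the smoothing is performed near zero in Lemma \ref{lemma/ smooth approximations only near zero}, so the balance between $\eta$ and $\epsilon$ must be tracked carefully, but the argument is essentially an optimisation and presents no conceptual difficulty.

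Finally, the strong convergence $\xi^\epsilon\to\xi$ enters only through the last term of \eqref{equation/rate of convergence 2}, which vanishes under \eqref{equation/ convergence of the noise sequence on C1 functions} and dominated convergence exactly as in Theorem \ref{theorem/clt for fluctuations 1}, so no further argument is required there.
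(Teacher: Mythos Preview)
Your convergence-in-probability argument (the first step) is essentially identical to the paper's: fix $\eta$ small enough that $\delta_\eta<\low$, split the probability along $\Omega^{\epsilon,\eta}$, and send $\epsilon\to0$ with $\eta$ held fixed so that both terms vanish.

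Where you diverge is in the second step, the explicit rate. You propose coupling $\eta=\eta(\epsilon)\to0$ and balancing the blow-up of the Assumption~\ref{assumption/assumption C2} constants for $\phi^\eta,\nu^\eta,\sigma^\eta$ against the decay of $R_{\epsilon,\eta}$. The paper does \emph{not} do this, and in fact no such optimisation is needed. The point is that a \emph{single fixed} $\eta_0$ already suffices for the rate: once $\eta_0$ is chosen so that $\delta_{\eta_0}<\low/2$ and $\inf\dot\phi^{\eta_0}\geq\low/3$ (which depends only on $\low$, the original coefficients, and the smoothing procedure of Lemma~\ref{lemma/ smooth approximations only near zero}), the constants in \eqref{equation/rate of convergence 2} and in $R_{\epsilon,\eta_0}$ are determined by this fixed $\eta_0$ and hence ultimately by the original data alone. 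Plugging these into the splitting inequality directly yields \eqref{formula/ rate of convergence clt in probability} with a constant $C=C(\low,\phi,\nu,\sigma,T,d,p,\beta,\bar\rho)$ not depending on any small parameter. Your $\eta(\epsilon)$ scheme would also work in principle, but it introduces an artificial tradeoff and extra bookkeeping that the problem does not actually require.
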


\begin{proof}
We fix $\eta_0\in(0,1)$ small enough.
For any $a>0$, using formula \eqref{equation/solution and smoothed solutions coincide with high probability} and Markov's inequality we estimate
{\small
\begin{align}\label{theorem/clt for fluctuations 3/proof 1}
    \begin{aligned}
        \P\Big(\|v^\epsilon-v\|_{L^{\tau}([0,T];H^{-\beta}(\T^d))}>a\Big)
        \leq&
        \P\left(\|v^\epsilon-v\|_{L^{\tau}([0,T];H^{-\beta}(\T^d))}>a,\,\underset{\T^d\times[0,T]}{\text{ess-sup}}\left|v^\epsilon-v^{\epsilon,\eta_0}\right|\neq0\right)
        \\
        &+
        \P\left(\|v^\epsilon-v\|_{L^{\tau}([0,T];H^{-\beta}(\T^d))}>a,\,\underset{\T^d\times[0,T]}{\text{ess-sup}}\left|v^\epsilon-v^{\epsilon,\eta_0}\right|=0\right)
        \\
        \leq&\,
        \P\left(\underset{\T^d\times[0,T]}{\text{ess-sup}}\left|v^\epsilon-v^{\epsilon,\eta_0}\right|\neq0\right)
        +
        \P\left(\|v^{\epsilon,\eta_0}-v\|_{L^{\tau}([0,T];H^{-\beta}(\T^d))}>a\right)
        \\
        \leq &\,
        \P\left(\underset{\T^d\times[0,T]}{\text{ess-sup}}\left|v^\epsilon-v^{\epsilon,\eta_0}\right|\neq0\right)
        +
        \frac{1}{a}\E\left[\|v^{\epsilon,\eta_0}-v\|_{L^{\tau}([0,T];H^{-\beta}(\T^d))}^2\right]^{\frac{1}{2}}.
    \end{aligned}
\end{align}
}
We keep $\eta_0$ fixed and we let $\epsilon\to0$.
The first term on the right hand side of \eqref{theorem/clt for fluctuations 3/proof 1} vanishes because of formula \eqref{equation/solution and smoothed solutions coincide with high probability}, the scaling regime chosen and dominated convergence.
The second term vanishes because of the scaling regime and Corollary \ref{corollary/clt for the smoothed solutions}.
Since $a>0$ is arbitrary, we conclude that $v^\epsilon\to v$ in probability.

To obtain an explicit convergence rate, we consider again the last line of formula \eqref{theorem/clt for fluctuations 3/proof 1}.
The constant $\delta_\eta$ defined in \eqref{equation/delta_eta} decreases to zero as $\eta\to0$, and therefore is eventually smaller than $\nicefrac{\low}{2}$ for any $\eta\in(0,\eta_0]$, for a suitable $\eta_0$ which only depends on $\low$, the original coefficients $\phi,\eta,\sigma$ and the approximation procedure employed in Lemma \ref{lemma/ smooth approximations only near zero}.
Fixed such an $\eta_0\in(0,1)$, we can explicitly compute the constant $C$ appearing in formula \eqref{equation/rate of convergence 2}.
Indeed, the constant depends on this fixed $\eta_0$ only through the original coefficients $\phi,\nu,\sigma$ and the approximation procedure used in Lemma \ref{lemma/ smooth approximations only near zero}.
Furthermore, the definition of $R_{\epsilon,\eta_0}$ in \eqref{formula/ reminder R epsilon 2} depends on this fixed $\eta_0$ only through $\inf\dot{\phi}^{\eta_0}$, and we can require this to be greater than some positive number, say $\nicefrac{\low}{3}$.
Applying this reasoning and plugging \eqref{equation/rate of convergence 2} and \eqref{equation/solution and smoothed solutions coincide with high probability}, with this $\eta_0$, into the last line of \eqref{theorem/clt for fluctuations 3/proof 1} we obtain, for arbitrary $a>0$, 
{\small
\begin{align}\label{formula/ rate of convergence clt in probability}
\begin{aligned}
    \P\Big(&\|v^\epsilon-v\|_{L^{\tau}([0,T];H^{-\beta}(\T^d))}>a\Big)
    \\
    \leq &
    \,C\sum_{j=1}^{\infty}\frac{1}{j^2}\, 
        \big(\epsilon^2\,\|F_1^{\epsilon}\|_{\infty}\|F_3^\epsilon\|_{\infty}+\epsilon\,\|F_3^\epsilon\|_{\infty}\big)^{\,\,e^{-d(d+1)}(1+\nicefrac{2}{d})^{1-j}}
        \\
    &
    +\frac{C}{a}\,
\left(\epsilon\left(|F_1^\epsilon|_{\infty}+|F_3^\epsilon|_{\infty}\right)^2+\epsilon^{\nicefrac{1}{2}}|F_3^\epsilon|_{\infty}^{\nicefrac{1}{2}}\right)
    \left(1+\epsilon\,|F_3^\epsilon|_\infty\right)^{g+\frac{d}{2}(p+m)}
    \left(1+\E\left[\|\rho_0\|_{L^1(\T^d)}^{m+p-1}+\|\rho_0\|_{L^p(\T^d)}^p\right]\right)
    \\
    &
    +\frac{C}{a}
    \sum_{n\in\Z^d}n^{(2-\frac{4}{\tau})-2\beta}  \sum_{k} \int_0^T\left|\int_{\T^d} e^{i2\pi nx}\,(f_k^\epsilon-f_k)\,dx\right|^2 ds,
\end{aligned}
\end{align}}
for a constant $C=C(\low,\phi,\nu,\sigma,T,d,p,\beta,\bar\rho)$ depending only on the initial data and the original coefficients, which can be computed explicitly in terms of $\low=\text{ess-inf}\rho_0$ and the constants $c$ featuring in Assumption \ref{assumption/assumption C1} and \ref{assumption/assumption C2 weak}.
\end{proof}


\section*{Acknowledgments}
The authors have no competing interests.
Andrea Clini is supported by the EPSRC Centre for Doctoral Training in Mathematics of Random Systems: Analysis, Modelling and Simulation (EP/S023925/1).
Benjamin Fehrman is supported by the EPSRC Early Career Fellowship (EP/V027824/1).
The authors would like to deeply thank J. A. Carrillo for comments and helpful discussions.



\end{document}